\documentclass[a4paper,12pt,reqno,english]{amsart}
\usepackage[utf8]{inputenc}
\usepackage[T1]{fontenc}
\usepackage{babel}


\usepackage[bookmarks = true, colorlinks, citecolor = blue, urlcolor = blue]{hyperref}
\usepackage[capitalize, nameinlink]{cleveref}


\setlength{\topmargin}{-0.43cm}
\setlength{\textheight}{23.3cm}
\setlength{\oddsidemargin}{-0.39cm}
\setlength{\evensidemargin}{-0.39cm}
\setlength{\textwidth}{16.7cm}


\numberwithin{equation}{section}
\numberwithin{figure}{section}


\theoremstyle{plain}
\newtheorem{theorem}{Theorem}[section]

\theoremstyle{plain}
\newtheorem*{theorem*}{Theorem}

\theoremstyle{plain}
\newtheorem{proposition}[theorem]{Proposition}

\theoremstyle{plain}
\newtheorem{lemma}[theorem]{Lemma}

\theoremstyle{plain}
\newtheorem{corollary}[theorem]{Corollary}

\theoremstyle{definition}
\newtheorem{definition}[theorem]{Definition}

\theoremstyle{definition}
\newtheorem{notation}[theorem]{Notation}

\theoremstyle{definition}
\newtheorem{example}[theorem]{Example}

\theoremstyle{remark}
\newtheorem{remark}[theorem]{Remark}


\addto\extrasenglish{
  
}


\newcommand{\Uqg}{U_q(\mathfrak{g})}
\newcommand{\Uqlevi}{U_q(\mathfrak{l}_S)}
\newcommand{\Cqg}{\mathbb{C}_q[G]}
\newcommand{\flagL}{\mathbb{C}_q[G / T]}
\newcommand{\flagR}{\mathbb{C}_q[T \backslash G]}
\newcommand{\flagleviL}{\mathbb{C}_q[G / L_S]}
\newcommand{\flagleviR}{\mathbb{C}_q[L_S \backslash G]}

\newcommand{\lieg}{\mathfrak{g}}

\newcommand{\CqG}{\mathbb{C}_q[G]}

\begin{document}

\title[Twisted 2-cycles on quantum flag manifolds]{Twisted Hochschild homology of quantum flag manifolds: 2-cycles from invariant projections}

\author{Marco Matassa}%

\email{marco.matassa@oslomet.no}

\address{OsloMet – storbyuniversitetet}

\begin{abstract}
We study the twisted Hochschild homology of quantum full flag manifolds, with the twist being the modular automorphism of the Haar state.
We show that non-trivial $2$-cycles can be constructed from appropriate invariant projections.
Moreover we show that $HH_2^\theta(\flagL)$ has dimension at least $\mathrm{rank}(\lieg)$.
We also discuss the case of generalized flag manifolds and present the example of the quantum Grassmannians.
\end{abstract}

\maketitle

\section*{Introduction}

In this article we will study some aspects of the twisted Hochschild homology of certain quantized coordinate rings.
These rings, which will be denoted by $\flagL$, are quantizations of the coordinate rings of the full flag manifolds $G / T$.
They will be defined starting from the quantized coordinate rings of the corresponding compact Lie groups, denoted by $\Cqg$.
Several of the results which we are going to prove will hold in this setting as well.
We will focus on the degree-two part where, as we will show, it is possible to produce many non-trivial classes from appropriate invariant projections.
Below we will discuss some reasons why we consider the degree-two part to be very interesting.
One of our main result is the following, where we write $\lieg$ for the Lie algebra of $G$ and $\mathrm{rank}(\lieg)$ for its rank.

\begin{theorem*}
Let $\theta$ be the modular automorphism of the Haar state of $\Cqg$.
Then the twisted Hochschild homology group $HH_2^\theta(\flagL)$ has dimension at least $\mathrm{rank}(\lieg)$.
\end{theorem*}

The case $\mathrm{rank}(\lieg) = 1$, corresponding geometrically to the quantum $2$-sphere, was previously known \cite{twisted-s2}.
In this case we have that $HH_2^\theta(\flagL)$ is $1$-dimensional.

Let us briefly provide some background and motivations for this article.
First we need to recall the well-known \emph{Hochschild-Kostant-Rosenberg} theorem: given a smooth variety $X$ and its coordinate ring $A$, the Hochschild homology $HH_\bullet(A)$, which is a graded-commutative algebra via the shuffle product, can be identified with the algebra of differential forms on $X$.
For non-commutative algebras, which we consider as quantum spaces, this motivates taking the Hochschild homology as a replacement for the differential forms, a point of view which is advocated in Connes' approach to non-commutative geometry \cite{connes}.
However the Hochschild homology of quantum spaces tends to be fairly degenerate compared to their classical counterparts, a phenomenon usually referred to as the "dimension drop".

On the other hand, the situation changes upon introducing some twisting in the coefficients.
This setting was introduced in \cite{kmt} for compact quantum groups, with the aim of finding a connection with Woronowicz's theory of covariant differential calculi \cite{woronowicz}.
Concrete computations were performed in \cite{twisted-s2, twisted-su2, sln, sl2-second}, showing that indeed appropriate twisting avoids the "dimension drop".
A more conceptual understanding of this phenomenon was given in \cite{brzh}, where it was connected with a general version of Poincaré duality for certain non-commutative algebras, known as \emph{Van den Bergh duality}.

Here we will focus on the study of twisted $2$-cycles on quantum full flag manifolds.
As we have mentioned above, in this case it is possible to produce many non-trivial classes from appropriate invariant projections.
This is interesting because the general results that are available do not give easy access to the intermediate degrees.
Another important motivation is that among $2$-cycles we expect to find examples of quantum Kähler forms, since the classical manifolds we are considering are Kähler.
We will come back to this point in the last section, where we will discuss the concrete example of the quantum Grassmannians.

The paper is organized as follows.
In \cref{sec:notation} we provide some background and fix notations and conventions.
In \cref{sec:hochschild} we recall basic facts on Hochschild homology, review known results on quantized coordinate rings and prove a simple result regarding twisted $2$-cycles.
In \cref{sec:projections} we define projections on quantized coordinate rings using appropriate matrix units.
In \cref{sec:flag} we show how these projections are connected to quantum flag manifolds and equivariant $K$-theory.
In \cref{sec:twisted} we show that these projections can be used to define twisted $2$-cycles. We also introduce some $2$-cocycles, in order to prove their non-triviality.
In \cref{sec:pairing} we compute the pairings of the cycles with the cocycles.
In \cref{sec:classes} we discuss non-triviality and linear independence of these classes, as well as proving our main theorem.
Finally in \cref{sec:generalized} we extend some of the previous results to generalized flag manifolds.
In particular we present the example of the quantum Grassmannians.

\section{Notations and conventions}
\label{sec:notation}

In this section we fix some basic notation and briefly review some facts about complex simple Lie algebras, quantized enveloping algebras and quantized coordinate rings.

\subsection{Quantized enveloping algebras}

Let $\mathfrak{g}$ be a finite-dimensional complex simple Lie algebra with fixed Cartan subalgebra $\mathfrak{h}$.
We denote by $\Delta(\mathfrak{g})$ the root system, by $\Delta^{+}(\mathfrak{g})$ the positive roots and by $\Pi = \{ \alpha_{1}, \cdots, \alpha_{r} \}$ the simple roots.
The Killing form induces an invariant bilinear form on $\mathfrak{h}^*$, normalized so that for every short root $\alpha_i$ we have $(\alpha_i, \alpha_i) = 2$.
The Cartan matrix $(a_{ij})$ is then defined by $(\alpha_i, \alpha_j) = d_i a_{ij}$, where $d_i := (\alpha_i, \alpha_i) / 2$.

For quantized enveloping algebras we use the conventions of \cite{klsc}.
Let $q \in \mathbb{C}$ be non-zero and define $q_i := q^{d_i}$. Suppose that $q_i^2 \neq 1$ for all $i$.
The \emph{quantized universal enveloping algebra} $U_q(\mathfrak{g})$ is generated by $\{E_i$, $F_i$, $K_i$, $K_i^{-1}\}_{i = 1}^r$, where $r$ is the rank of $\mathfrak{g}$, with relations
\begin{gather*}
K_i K_i^{-1} = K_i^{-1} K_i=1,\ \
K_i K_j = K_j K_i, \\
K_i E_j K_i^{-1} = q_i^{a_{ij}} E_j,\ \
K_i F_j K_i^{-1} = q_i^{-a_{ij}} F_j, \\
E_i F_j - F_j E_i = \delta_{ij} \frac{K_i - K_i^{-1}}{q_i - q_i^{-1}},
\end{gather*}
plus the quantum analogue of the Serre relations.
The Hopf algebra structure is defined by
\begin{gather*}
\Delta(K_i) = K_i \otimes K_i, \quad
\Delta(E_i) = E_i \otimes K_i + 1 \otimes E_i, \quad
\Delta(F_i) = F_i \otimes 1 + K_i^{-1} \otimes F_i, \\
S(K_i) = K_i^{-1}, \quad
S(E_i) = - E_i K_i^{-1}, \quad
S(F_i) = - K_i F_i, \quad
\varepsilon(K_i) = 1, \quad
\varepsilon(E_i) = \varepsilon(F_i) = 0.
\end{gather*}

For $\lambda = \sum_{i = 1}^r n_i \alpha_i$ we will write $K_\lambda = K_1^{n_1} \cdots K_r^{n_r}$.
Let $\rho$ be the half-sum of the positive roots of $\mathfrak{g}$. Then we have $S^2(X) = K_{2\rho} X K_{2\rho}^{-1}$ for any $X \in \Uqg$.
For $q \in \mathbb{R}$ we can define the \emph{compact real form} of $U_q(\mathfrak{g})$, which makes it into a Hopf $*$-algebra. It is defined by
\[
K_i^* = K_i, \quad
E_i^* = K_i F_i, \quad
F_i^* = E_i K_i^{-1}.
\]

\subsection{Quantized coordinate rings}

Dually to the quantized enveloping algebra $\Uqg$ we define the \emph{quantized coordinate ring} $\Cqg$, whose elements should be interpreted as "functions" on the corresponding compact quantum group. We define $\Cqg$ as the subspace of the linear dual $\Uqg^*$ spanned by the matrix coefficients of finite-dimensional representations of $\Uqg$.
The Hopf $*$-algebra structure of $\Uqg$ induces a Hopf $*$-algebra on $\Cqg$ by the formulae
\begin{gather*}
(\phi \psi)(X) = (\phi \otimes \psi) \Delta(X), \quad
1(X) = \varepsilon(X), \\
\Delta(\phi)(X \otimes Y) = \phi(X Y), \quad
\varepsilon(\phi) = \phi(1), \\
S(\phi)(X) = \phi(S(X)), \quad
\phi^*(X) = \overline{\phi(S(X)^*)}.
\end{gather*}
Here $\phi, \psi \in \Cqg$ and $X, Y \in \Uqg$.
More precisely, given an irreducible representation $V(\Lambda)$ of highest weight $\Lambda$, the matrix coefficients are defined by
\[
c^{\Lambda}_{f, v}(X) = f(X \triangleright v), \quad
v \in V(\Lambda), \ f \in V(\Lambda)^*, \ X \in \Uqg.
\]
The quantized coordinate ring $\Cqg$ is a $\Uqg$-bimodule in a natural way via
\[
(X \triangleright \phi)(Y) = \phi(Y X), \quad
(\phi \triangleleft X)(Y) = \phi(X Y).
\]
It is well known that the finite-dimensional irreducible representations $V(\Lambda)$ are unitarizable.
Therefore we are free to choose an orthonormal basis $\{v_i\}_i$ of $V(\Lambda)$. It will be convenient to do so in the following.
We also have a corresponding dual basis $\{f^i\}_i$ of $V(\Lambda)^*$.
With this setup we will introduce some special notation for the matrix coefficients, namely
\[
u^i_j = c^{\Lambda}_{f^i, v_j}(X) = f^i(X \triangleright v_j).
\]
We omit the dependence on the representation $V(\Lambda)$ to lighten the notation.
We will also denote by $\lambda_i$ the weight corresponding to the basis vector $v_i$.

\begin{remark}
Usually the quantized coordinate ring $\mathbb{C}_q[G]$ is presented in terms of generators coming from one particular representation of $\mathfrak{g}$.
For example, the presentation of the algebra $\mathbb{C}_q[SL(N)]$ in \cite{klsc} is given in terms of the generators $u^i_j$ which correspond to the choice of the fundamental representation.
Our general presentation here follows \cite{quantum-flag}, for example.
\end{remark}

Later on we will need some explicit formulae for the action of $\Uqg$ on $\Cqg$.
Let us write $X \triangleright v_i = \sum_j \pi(X)^j_i v_j$ for the representation. Then we obtain the formulae
\begin{equation}
\label{eq:action-ugen}
X \triangleright u^i_j = \sum_k \pi(X)^k_j u^i_k, \quad
X \triangleright u^{i*}_j = \sum_k \pi(S(X))^j_k u^{i*}_k.
\end{equation}
In obtaining the second one we have used the fact that $\{v_i\}_i$ is an orthonormal basis.
Similarly for the right action we obtain the formulae
\begin{equation}
\label{eq:actionR-ugen}
u^i_j \triangleleft X = \sum_k \pi(X)^i_k u^k_j, \quad
u^{i*}_j \triangleleft X = \sum_k \pi(S(X))^k_i u^{k*}_j.
\end{equation}

\section{Hochschild homology, quantum groups and projections}
\label{sec:hochschild}

In this section we will give a brief introduction to Hochschild homology, with emphasis on the twisted setting.
We will then recall the results of  Brown and Zhang on the Hochschild homology of certain Hopf algebras.
Finally we will discuss a simple method to obtain twisted $2$-cycles, valid for any algebra which admits projections satisfying certain properties.

\subsection{Hochschild homology}

Hochschild homology is a homology theory for associative algebras, which we consider here to be over $\mathbb{C}$.
The main reference for this section is \cite{loday}.
Let $A$ be an associative algebra and $M$ be an $A$-bimodule.
Write $C_n (A, M) := M \otimes A^{\otimes n}$. The \emph{Hochschild boundary} is the linear map $\mathrm{b}: C_n (A, M) \to C_{n - 1} (A, M)$ given by
\[
\begin{split}
\mathrm{b} (m \otimes a_1 \otimes \cdots \otimes a_n) & := m a_1 \otimes \cdots \otimes a_n \\
& + \sum_{i = 1}^{n - 1} (-1)^i m \otimes a_1 \otimes \cdots \otimes a_i a_{i + 1} \otimes \cdots \otimes a_n \\
& + (-1)^n a_n m \otimes a_1 \otimes \cdots \otimes a_{n - 1}.
\end{split}
\]
It satisfies $\mathrm{b}^2 = 0$, hence we have corresponding homology groups denoted by $H_\bullet(A, M)$.
We will also use the notation $HH_\bullet(A) = H_\bullet(A, A)$.
It can also be defined in terms of derived functors as $H_n(A, M) = \mathrm{Tor}^{A^e}_n(A, M)$, where $A^e := A \otimes A^\mathrm{op}$.
There is a corresponding dual cohomology theory, whose groups are denoted by $H^n(A, M)$.

A natural choice of bimodules is given by $M = A$.
Similarly we can consider the \emph{twisted bimodules} $M = {}_\sigma A$, which will be our main interest. They are defined as follows: as a vector space $M = A$, but the bimodule structure is given by $a \cdot b \cdot c = \sigma(a) b c $, where $\sigma \in \mathrm{Aut}(A)$.
For these we will use the notation $HH^\sigma_\bullet(A) := H_\bullet(A, {}_\sigma A)$.
We will also use the notation $\mathrm{b}_\sigma$ for the Hochschild boundary in this situation.
Notice that we could as well introduce a twist for the right multiplication, but as bimodules this gives nothing new.

An important case we want to consider is when $A$ is the algebra of functions on some smooth space $X$.
It turns out that the Hochschild homology of $A$ is related to the differential forms defined on $X$. This is the \emph{Hochschild-Kostant-Rosenberg} theorem, a proof of which can be found in \cite[Theorem 3.4.4]{loday}. We state the theorem for algebras over $\mathbb{C}$ for simplicity.
For a commutative unital algebra $A$, we have the $A$-module of differential forms $\Omega^\bullet_A := \bigwedge^\bullet_A \Omega^1_A$ constructed from the module of Kähler differentials $\Omega^1_A$, see \cite[Section 1.1.9]{loday}.

\begin{theorem}[Hochschild-Kostant-Rosenberg]
Let $A$ be a commutative smooth algebra over $\mathbb{C}$.
Then there is an isomorphism of graded $\mathbb{C}$-algebras $\Omega^\bullet_A \cong HH_\bullet(A)$.
\end{theorem}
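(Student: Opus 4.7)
The plan is to produce, for a commutative algebra $A$, an explicit candidate morphism from $\Omega^\bullet_A$ to $HH_\bullet(A)$, verify that it is always a well-defined map of graded algebras, and then show it is an isomorphism precisely when $A$ is smooth by reducing to the case of a polynomial algebra.

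First I would introduce the antisymmetrization map $\varepsilon_n : \Omega^n_A \to HH_n(A)$ defined on representatives by
\[
a_0 \, da_1 \wedge \cdots \wedge da_n \longmapsto \frac{1}{n!} \sum_{\sigma \in S_n} \mathrm{sgn}(\sigma) \, a_0 \otimes a_{\sigma(1)} \otimes \cdots \otimes a_{\sigma(n)},
\]
and check that it takes values in the kernel of $\mathrm{b}$ (using commutativity of $A$) and respects the defining relations of $\Omega^\bullet_A$ modulo boundaries. In the opposite direction, the map $\mu_n : C_n(A, A) \to \Omega^n_A$ given by $a_0 \otimes a_1 \otimes \cdots \otimes a_n \mapsto a_0 \, da_1 \wedge \cdots \wedge da_n$ kills Hochschild boundaries and satisfies $\mu_n \circ \varepsilon_n = \mathrm{id}$, so $\varepsilon_\bullet$ is at least always split injective on homology. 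Compatibility with the multiplicative structure comes from the standard combinatorial identity showing that antisymmetrization intertwines the wedge product on $\Omega^\bullet_A$ with the shuffle product on Hochschild chains, which induces the product on $HH_\bullet$.

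Second I would verify the theorem by direct computation for a polynomial algebra $A = \mathbb{C}[x_1, \ldots, x_n]$. Here the Koszul complex gives an explicit free resolution of $A$ as a module over $A^e = A \otimes A$, and after tensoring over $A^e$ with $A$ the differential vanishes identically. The resulting complex computing $HH_\bullet(A) = \mathrm{Tor}^{A^e}_\bullet(A, A)$ is then isomorphic degree-by-degree to $\Omega^\bullet_A$, and the isomorphism is induced exactly by $\varepsilon_\bullet$. This settles the free case and, as a bonus, shows that the Koszul computation is compatible with the wedge/shuffle product.

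The principal obstacle, and the step I would spend the most effort on, is the passage from polynomial algebras to arbitrary commutative smooth $\mathbb{C}$-algebras. Here I would use that Hochschild homology of commutative algebras is compatible with localization, i.e.\ $HH_\bullet(S^{-1} A) \cong S^{-1} HH_\bullet(A)$ for any multiplicative subset $S \subset A$, and similarly with étale base change. Since smoothness means $A$ is, Zariski-locally, étale over a polynomial algebra, one can cover $\mathrm{Spec}(A)$ by such localizations where the polynomial case applies, and then globalize by a sheafification or Mayer--Vietoris argument; alternatively, one can complete at each closed point, where the completed algebra is a power series ring and the Koszul computation carries over. Either route upgrades $\varepsilon_\bullet$ to a degreewise isomorphism, and combining this with the multiplicativity established in the first step yields the desired isomorphism of graded $\mathbb{C}$-algebras.
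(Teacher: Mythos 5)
The paper does not prove this statement; it is quoted as classical background with a pointer to Loday's book (Theorem 3.4.4 there). Your outline --- antisymmetrization map $\varepsilon_\bullet$ split by the projection $\mu_\bullet$, the Koszul-complex computation for polynomial algebras, and the passage to general smooth algebras by localization/\'etale descent, with multiplicativity via the shuffle product --- is precisely the standard argument given in that reference, so your proposal matches the proof the paper relies on.
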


We will not give the definition of a smooth algebra, but just mention that the example to keep in mind is $A = \mathbb{C}[X]$ for a smooth affine variety $X$.
The algebra structure on $HH_\bullet(A)$ is given by the shuffle product, which strongly relies on commutativity of $A$.

This result motivates a possible definition of differential forms for non-commutative algebras. However, as we will see below, in general $HH_\bullet (A)$ is very degenerate.

\subsection{The case of quantum groups}

The Hochschild homology of quantum $SU(2)$ and of the quantum $2$-sphere was computed by Masuda, Nakagami and Watanabe in the papers \cite{mnw-su2} and \cite{mnw-s2}.
Among their results we find that $HH_3(\mathbb{C}_q[SU(2)]) = 0$ and $HH_2(\mathbb{C}_q[S^2]) = 0$.
Therefore in this setting we do not have "volume forms".
The situation is different if we allow some twisting, namely by considering twisted bimodules as discussed above.
In this setting the computation for quantum $SU(2)$ was done by Hadfield and Krähmer in \cite{twisted-su2, sl2-second} and for the quantum $2$-sphere by Hadfield in \cite{twisted-s2}.

Motivated by these computations, Brown and Zhang made a general analysis of this phenomenon in \cite{brzh}.
The object of their study is the twisted Hochschild homology of a certain class of Hopf algebras, which includes the quantized coordinate rings $\Cqg$.
As a twist they consider a particular automorphism $\nu$, which generalized the classical \emph{Nakayama automorphism} for Frobenius algebras, which is unique up to inner automorphisms.
One of their main results is the following \cite[Theorem 3.4 and 5.3]{brzh}.

\begin{theorem}[Brown, Zhang]
Let $A$ be a Noetherian AS-Gorenstein Hopf algebra of finite global dimension $d$, with bijective antipode. Let $\nu$ be its Nakayama automorphism. Then we have $H_d(A, {}_{\nu^{-1}} A) \neq 0$ and $H^d(A, {}_\nu A) \neq 0$.
\end{theorem}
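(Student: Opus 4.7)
The plan is to prove the two nonvanishing statements by exploiting the Hopf algebra structure to reduce Hochschild (co)homology with bimodule coefficients to $\mathrm{Ext}$/$\mathrm{Tor}$ groups of the trivial module over $A$, where the AS-Gorenstein hypothesis provides the needed nonvanishing in top degree. I would establish the cohomological statement $H^d(A, {}_\nu A) \neq 0$ first and then deduce the homological statement $H_d(A, {}_{\nu^{-1}} A) \neq 0$ by a twisted Poincaré duality.

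First I would use the standard identification $H^\bullet(A, M) = \mathrm{Ext}^\bullet_{A^e}(A, M)$ together with a Hopf-algebraic isomorphism between $A$-bimodules and left $A$-modules endowed with the adjoint action. Concretely, for a bimodule $M$ one defines $M^{\mathrm{ad}}$ by $x \cdot m = \sum x_{(1)} \, m \, S(x_{(2)})$, and there is a natural isomorphism $\mathrm{Ext}^\bullet_{A^e}(A, M) \cong \mathrm{Ext}^\bullet_A(k, M^{\mathrm{ad}})$, where $k$ is the trivial module, valid for any Hopf algebra with bijective antipode. The AS-Gorenstein hypothesis then yields $\mathrm{Ext}^d_A(k, A) \cong k_\chi$, one-dimensional, where $\chi$ is the homological integral; this is the source of all nontriviality in the argument. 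One must then show that with $M = {}_\nu A$ the adjoint bimodule $({}_\nu A)^{\mathrm{ad}}$ has top Ext group isomorphic to this $k_\chi$ (up to a nonzero factor), rather than vanishing.

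The main step, and where I expect the real subtlety, is verifying that the Nakayama automorphism $\nu$ is precisely the twist required to make this identification nondegenerate. The defining formula for $\nu$ combines the homological integral $\chi$ with the square of the antipode, something like $\nu(a) = \sum \chi(a_{(2)}) \, S^{-2}(a_{(1)})$, and the point is that twisting the left factor of the bimodule by $\nu$ exactly absorbs both the character $\chi$ (appearing because $\mathrm{Ext}^d_A(k, A) \cong k_\chi$ rather than $k$) and the occurrence of $S^2$ in the passage from bimodules to adjoint left modules. The hard part is therefore a careful bookkeeping of these identifications; bijectivity of the antipode is needed so that all the intermediate maps are invertible, and AS-Gorenstein is needed both to ensure finite global dimension and to identify the top Ext explicitly with a character module.

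For the homology assertion $H_d(A, {}_{\nu^{-1}} A) \neq 0$, I would invoke the Van den Bergh twisted Poincaré duality available for AS-Gorenstein algebras, which provides an isomorphism of the form $H^i(A, M) \cong H_{d-i}(A, U \otimes_A M)$, where $U$ is the rigid dualizing bimodule. For an AS-Gorenstein Hopf algebra this dualizing bimodule is isomorphic to $A$ with one of its actions twisted by a suitable power of $\nu$, and tracing through the duality at $i = d$ should turn the already-established nonvanishing $H^d(A, {}_\nu A) \neq 0$ into $H_d(A, {}_{\nu^{-1}} A) \neq 0$. This last step is essentially formal once Van den Bergh duality and the explicit form of $U$ for Hopf algebras are in hand, so I do not anticipate additional obstacles beyond the bookkeeping described above.
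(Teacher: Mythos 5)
This theorem is not proved in the paper at all: it is imported verbatim from Brown and Zhang \cite{brzh} (their Theorems 3.4 and 5.3), so there is no internal proof to compare your attempt against. Judged on its own terms, your outline does follow the strategy of the published proof: the identification $H^\bullet(A,M)\cong\mathrm{Ext}^\bullet_{A}(k,M^{\mathrm{ad}})$ via the adjoint action, the AS-Gorenstein input $\mathrm{Ext}^d_A(k,A)\cong k_\chi$ with $\chi$ the homological integral, the description of $\nu$ as the winding automorphism of $\chi$ composed with $S^{\pm 2}$, and Van den Bergh duality are all the right ingredients. However, the proposal is a plan rather than a proof: the step you yourself identify as the crux --- verifying that twisting by $\nu$ makes $\mathrm{Ext}^d_A\bigl(k,({}_\nu A)^{\mathrm{ad}}\bigr)$ nonzero, i.e.\ that $\nu$ absorbs exactly the character $\chi$ and the $S^2$ arising in the bimodule-to-adjoint-module translation --- is deferred to ``careful bookkeeping'' and never carried out. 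That computation is the entire content of the theorem; without it nothing has been established.

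There is also a concrete degree mismatch in your deduction of the homological statement. The duality reads $H^i(A,M)\cong H_{d-i}(A,{}_{\nu^{-1}}M)$, so applying it at $i=d$ with $M={}_\nu A$ yields $H^d(A,{}_\nu A)\cong H_0(A,A)$, not $H_d(A,{}_{\nu^{-1}}A)$. The instance you actually need is $i=0$, $M=A$, which gives $H_d(A,{}_{\nu^{-1}}A)\cong H^0(A,A)=Z(A)\ni 1$; this is how the paper itself uses the duality in Section 2, and it makes the homological nonvanishing an immediate corollary of the duality, independent of the cohomological statement rather than a consequence of it. The fix is easy, but as written your final step does not produce the claimed group.
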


Moreover there is a twisted Poincaré duality connecting homology and cohomology.
This is a particularly simple instance of the general \emph{Van den Bergh duality} \cite{vandenbergh}.

\begin{theorem}[Brown, Zhang]
Let $A$ be as above. Then for any $A$-bimodule $M$ and for all $i$ we have $H^i(A, M) \cong H_{d - i}(A, {}_{\nu^{-1}} M)$.
\end{theorem}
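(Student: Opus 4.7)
The plan is to prove this via a twisted Van den Bergh duality argument, where the key data are a finite projective resolution of $A$ as an $A^e$-module together with an identification of its dualizing bimodule. First, I would pass to the derived-functor description $H^i(A, M) \cong \mathrm{Ext}^i_{A^e}(A, M)$ and $H_j(A, M) \cong \mathrm{Tor}^{A^e}_j(A, M)$. The finite global dimension of $A$ as an $A$-module together with the Hopf structure implies finite projective dimension of $A$ as an $A^e$-module: one uses the Hopf-module isomorphism $A \otimes A \cong A \otimes A$ given by $a \otimes b \mapsto a_{(1)} \otimes a_{(2)} b$ (whose inverse requires bijectivity of the antipode) to trivialize one of the factor actions on $A^e$, so that $A^e$-projectives become resolvable through finite-projective $A$-modules. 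This yields a finite projective resolution $P_\bullet \to A$ over $A^e$ of length $d$.

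Second, I would transfer the AS-Gorenstein condition from $A$-modules to $A^e$-modules. The vanishing $\mathrm{Ext}^i_A(\mathbb{C}, A) = 0$ for $i \neq d$ should, via a change-of-rings spectral sequence, produce $\mathrm{Ext}^i_{A^e}(A, A^e) = 0$ for $i \neq d$. The non-vanishing term $U := \mathrm{Ext}^d_{A^e}(A, A^e)$ is an invertible $A$-bimodule, and the Nakayama automorphism $\nu$ is characterized (up to inner automorphism) precisely by the requirement $U \cong {}_{\nu^{-1}} A$ as bimodules; this is essentially the algebraic definition of $\nu$ in the present setting.

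With both ingredients in hand, the duality is a formal manipulation. Dualizing the resolution yields $\mathrm{Hom}_{A^e}(P_\bullet, A^e)$, which is a right $A^e$-projective resolution of $U$ shifted by $d$. Therefore
\[
H^i(A, M) \cong H^i\bigl(\mathrm{Hom}_{A^e}(P_\bullet, M)\bigr) \cong H^i\bigl(\mathrm{Hom}_{A^e}(P_\bullet, A^e) \otimes_{A^e} M\bigr) \cong \mathrm{Tor}^{A^e}_{d - i}(U, M),
\]
and the bimodule identity $\mathrm{Tor}^{A^e}_j({}_{\nu^{-1}} A, M) \cong \mathrm{Tor}^{A^e}_j(A, {}_{\nu^{-1}} M)$ completes the argument.

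The hard part will be the second step: showing that $\mathrm{Ext}^\bullet_{A^e}(A, A^e)$ is concentrated in degree $d$ with the correct twisted value. Transferring the AS-Gorenstein condition from the augmentation module $\mathbb{C}$ to the bimodule $A$ requires explicit control over how the outer and inner $A$-actions on $A^e$ interchange under the comultiplication and antipode. Computing $\nu$ explicitly --- as a specific composition of $S^2$ with a twist by the distinguished grouplike element (the analogue of the modular function) --- is substantially subtler than the formal homological manipulations surrounding it, and in the Brown--Zhang analysis this is where most of the work concentrates.
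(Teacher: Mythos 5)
The paper does not prove this statement: it is quoted verbatim as a theorem of Brown and Zhang (from \cite{brzh}, where it appears as part of their duality results) and used as a black box. Your sketch is essentially the argument of that reference --- Van den Bergh duality built from the finiteness of $\mathrm{pd}_{A^e}(A)$ (via the Hopf--Galois trivialization, needing bijectivity of $S$), the transfer of the AS--Gorenstein condition to $\mathrm{Ext}^\bullet_{A^e}(A,A^e)$ being concentrated in degree $d$ with invertible value $U$, and the identification $U \otimes_A M \cong {}_{\nu^{-1}}M$ --- and you correctly locate the genuine work in the second step; the only point to watch is the side/sign convention relating $U$ to ${}_{\nu^{-1}}A$ versus $A_\nu$, which must be matched to the paper's convention that the twist acts on the left.
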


These results can be applied to the quantized coordinate rings $\Cqg$.
In this case it is known that the finite global dimension $d$ coincides with the classical dimension.
Brown and Zhang show that $\nu$ is given by the modular automorphism in the case of $SL(N)$.
This is true in general by a result of Dolgushev \cite{dolgushev}, which uses techniques of deformation quantization.

\subsection{Twisted 2-cycles}

The aim of this paper is to study twisted $2$-cycles on the quantized coordinate rings $\Cqg$.
Below we discuss two reasons why this should be interesting.

1) The first reason is that the results of Brown and Zhang do not give concrete information about what happens in the intermediate degrees.
The bottom degree part $H_0(A, {}_{\nu^{-1}} A)$ can be determined explicitly from its definition, while the top degree part $H_d(A, {}_{\nu^{-1}} A)$ can be obtained using the twisted Poincaré duality mentioned above as
\[
H_d(A, {}_{\nu^{-1}} A) \cong H^0(A, A) \cong Z(A).
\]
Note that $Z(A) \neq 0$, since the center always contains the unit.
On the other hand we do not know the groups in the intermediate degrees. For example they could be all zero, which would be unsatisfactory for their interpretation as differential forms.

2) The second reason, which singles out $2$-cycles, is the following. At some point during our analysis we will naturally encounter quantum full flag manifolds corresponding to $\Cqg$. Classically full flag manifolds are \emph{Kähler manifolds}, a fact which more generally is true for any generalized flag manifold.
These admit a $2$-form $\omega$, called the \emph{Kähler form}, which among other things allows to obtain a volume form as $\omega^{\wedge n}$, where $n$ is the complex dimension.
Hence among twisted $2$-cycles we expect to find examples of quantum Kähler forms.
Differently from the commutative case, for non-commutative algebras there is no obvious way of multiplying classes.
But, if such a way exists after all, a natural question is whether one can obtain a top degree form by appropriately multiplying these quantum Kähler forms.

After this discussion, we will present a simple way to obtain twisted Hochschild $2$-cycles from projections satisfying suitable conditions.
A similar construction is used in \cite[Proposition 5.3]{wag09}.
Below $A$ will denote a general unital associative algebra.
We will make use of the trace map $\mathrm{Tr}: \mathrm{Mat}_r(A)^{\otimes n + 1} \to A^{\otimes n + 1}$, see \cite[Definition 1.2.1]{loday}. It is defined by
\[
\mathrm{Tr}(M_0 \otimes M_1 \otimes \cdots \otimes M_n) := \sum_{i_0, \cdots, i_n} (M_0)^{i_0}_{i_1} \otimes (M_1)^{i_1}_{i_2} \otimes \cdots \otimes (M_n)^{i_n}_{i_0}.
\]

\begin{lemma}
\label{lem:hoch-lemma}
Let $P \in \mathrm{Mat}(A)$ be a projection and $\sigma$ an automorphism of $A$.
Suppose there exists an invertible matrix $V \in \mathrm{Mat}(\mathbb{C})$ such that
\[
\sigma(P) = V P V^{-1}, \quad\mathrm{Tr}(V P) = c \cdot 1,
\]
for some $c \in \mathbb{C}$.
Consider the element $C(P) \in A^{\otimes 3}$ given by
\[
C(P) = \mathrm{Tr} \left( V (2 P - \mathrm{Id}) \otimes P \otimes P \right).
\]
Then we have a corresponding class $[C(P)] \in HH_2^\sigma(A)$.
\end{lemma}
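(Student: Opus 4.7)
The approach is a direct computation of the twisted Hochschild boundary $\mathrm{b}_\sigma C(P)$. Setting $M := V(2P - \mathrm{Id})$, so that $C(P) = \sum_{i,j,k} M_{ij} \otimes P_{jk} \otimes P_{ki}$, the formula $\mathrm{b}_\sigma(m \otimes a_1 \otimes a_2) = m a_1 \otimes a_2 - m \otimes a_1 a_2 + \sigma(a_2) m \otimes a_1$ yields three contributions, which after rearranging matrix indices read
\[
\mathrm{Tr}(MP \otimes P), \quad -\mathrm{Tr}(M \otimes P), \quad \mathrm{Tr}(\sigma(P) M \otimes P).
\]

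The hypotheses on $P$ imply exactly the identities needed for these to collapse. The projection relation $P^2 = P$ gives $MP = V(2P^2 - P) = VP$, and multiplying $\sigma(P) = V P V^{-1}$ on the right by $V$ yields $\sigma(P) V = V P$, whence $\sigma(P) M = V P (2P - \mathrm{Id}) = V(2P^2 - P) = VP$. Consequently the first and third terms both equal $\mathrm{Tr}(VP \otimes P)$, while the middle term, expanded using $M = 2VP - V$ and the identity $\sum_k P_{jk} P_{ki} = P_{ji}$, becomes $-2\mathrm{Tr}(VP \otimes P) + \mathrm{Tr}(V \otimes P)$. Summing, the three copies of $\mathrm{Tr}(VP \otimes P)$ cancel and one obtains $\mathrm{b}_\sigma C(P) = \mathrm{Tr}(V \otimes P)$; because $V$ has scalar entries, this rewrites as $1 \otimes \mathrm{Tr}(VP)$, which by hypothesis equals $c \cdot (1 \otimes 1)$.

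The final step is to observe that $c \cdot (1 \otimes 1)$ is a degenerate Hochschild $1$-chain and hence vanishes in the normalized Hochschild complex, which is quasi-isomorphic to the bar complex (see \cite[Proposition 1.6.5]{loday}). Equivalently, the chain $C(P) - c \cdot (1 \otimes 1 \otimes 1)$ is a genuine $\mathrm{b}_\sigma$-cycle (since $\mathrm{b}_\sigma(1 \otimes 1 \otimes 1) = 1 \otimes 1$) representing the same class. The main obstacle is conceptual rather than computational: the scalar $\mathrm{Tr}(V \otimes P)$ does not literally vanish in the bar complex, only modulo degenerate chains. Recognizing this, and observing that the coefficient $2$ in $V(2P - \mathrm{Id})$ is chosen precisely to make the first and third terms cancel the leading piece of the middle one via the twin identities $(2P - \mathrm{Id})P = P$ and $\sigma(P)V = VP$, is the only delicate aspect of the argument.
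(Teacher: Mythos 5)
Your proof is correct and follows essentially the same route as the paper: a direct computation of $\mathrm{b}_\sigma C(P)$ using $P^2=P$ and $\sigma(P)V=VP$ to reduce the boundary to $1\otimes\mathrm{Tr}(VP)=c\,(1\otimes 1)$, which is then killed by passing to the normalized complex. Your added observation that $C(P)-c\,(1\otimes1\otimes1)$ is an honest cycle is a nice (equivalent) way to phrase the last step; the only nit is the Loday reference, which should be Proposition~1.1.15 rather than 1.6.5.
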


\begin{proof}
Since we are in low dimension we can proceed with a direct computation.
Using the definition of the boundary map and of the $2$-chain $C(P)$ we obtain
\[
\begin{split}
\mathrm{b}_\sigma C(P)
& = \sum_{i, j, k, \ell} V^i_j (2 P^j_k - \delta^j_k) P^k_\ell \otimes P^\ell_i - \sum_{i, j, k, \ell} V^i_j (2 P^j_k - \delta^j_k) \otimes P^k_\ell P^\ell_i \\
& + \sum_{i, j, k, \ell} \sigma(P^\ell_i) V^i_j (2 P^j_k - \delta^j_k) \otimes P^k_\ell.
\end{split}
\]
Let us write $A_{1}$ and $A_{2}$ for the first and second line of
this expression. Using the projection relations $\sum_k P^i_k P^k_j = P^i_j$ and simplifying we get
\[
A_1 = - \sum_{i, j, \ell} V^i_j P^j_\ell \otimes P^\ell_i + 1 \otimes \sum_{i, j} V^i_j P^j_i.
\]
Since $V$ is assumed to be invertible, the second term can be rewritten as
\[
A_2 = \sum_{i, j, k, \ell, m, n} V^\ell_m (V^{-1})^m_n \sigma(P^n_i) V^i_j (2 P^j_k - \delta^j_k) \otimes P^k_\ell.
\]
Moreover using the condition $V^{-1}\sigma(P)V=P$ we find
\[
A_2 = \sum_{j, k, \ell, m} V^\ell_m P^m_j (2 P^j_k - \delta^j_k) \otimes P^k_\ell = \sum_{k, \ell, m} V^\ell_m P^m_k \otimes P^k_\ell.
\]
Finally summing the two terms we have a cancellation and we obtain
\[
\mathrm{b}_\sigma C(P) = A_1 + A_2 = 1 \otimes\sum_{i, j} V^i_j P^j_i.
\]
Now recall that the normalized Hochschild complex is defined in terms of the chains $\bar{C}_n (A) = A \otimes (A / \mathbb{C})^{\otimes n}$.
Hence using the condition $\mathrm{Tr}(V P) = c$ we conclude that $\mathrm{b}_\sigma C(P) = 0$ in the normalized Hochschild complex. Since this complex is quasi-isomorphic
to the usual Hochschild complex \cite[Proposition 1.1.15]{loday}, we obtain a class $[C(P)] \in HH^\sigma_2(A)$.
\end{proof}

\begin{remark}
The expression defining $C(P)$ can be seen as a modification of the Chern character $\mathrm{ch}_n : K_0(A) \mapsto H^\lambda_{2n}(A)$ given by $P \mapsto \mathrm{Tr} (P^{\otimes 2n + 1})$.
However such a simple modification, landing in Hochschild homology, seems to be possible only in the case $n = 1$.
\end{remark}

We will use this method to produce non-trivial classes for quantum full flag manifolds.

\section{Projections on quantized coordinate rings}
\label{sec:projections}

In this section we will define some projections on the quantized coordinate rings $\mathbb{C}_q[G]$. These will be built using some appropriate "matrix units", corresponding to the choice of an irreducible representation $V(\Lambda)$.
We will consider the action of the modular automorphism coming from the Haar state.
We will show that this action on the projections can be implemented by conjugation, provided a certain condition holds.

\subsection{Matrix units}

For the rest of this section we fix a representation $V(\Lambda)$ and denote by $u^i_j$ its
matrix coefficients with respect to an orthonormal basis, as explained before.

\begin{lemma}
The matrix coefficients $u^i_j$ satisfy the relations
\[
\begin{gathered}\sum_{k}u_{a}^{k*}u_{b}^{k}=\delta_{b}^{a}1=\sum_{k}u_{k}^{a}u_{k}^{b*},\\
\sum_{k}q^{(2\rho,\lambda_{k}-\lambda_{b})}u_{b}^{k}u_{a}^{k*}=\delta_{b}^{a}1=\sum_{k}q^{(2\rho,\lambda_{a}-\lambda_{k})}u_{k}^{b*}u_{k}^{a}.
\end{gathered}
\]
\end{lemma}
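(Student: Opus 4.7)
The plan is to reduce each of the four identities to a straightforward consequence of the Hopf-algebraic axioms together with, in the second pair, the identity $S^2(X) = K_{2\rho} X K_{2\rho}^{-1}$. The preliminary step will be to rewrite the starred matrix coefficient in a usable form. Because $V(\Lambda)$ carries a $U_q(\mathfrak{g})$-invariant inner product for which $\{v_i\}$ is orthonormal, the representation is unitary, in the sense that $\pi(X^*)^i_j = \overline{\pi(X)^j_i}$. Combining this with the definition $\phi^*(X) = \overline{\phi(S(X)^*)}$ gives the identification $u^{k*}_a(X) = \pi(S(X))^a_k$, and similarly for $u^{b*}_k$.

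With this dictionary at hand, the first pair of identities drops out directly from the product formula $(\phi\psi)(X) = (\phi \otimes \psi)\Delta(X)$ together with the antipode axioms $S(X_{(1)}) X_{(2)} = \varepsilon(X) = X_{(1)} S(X_{(2)})$. Writing $\Delta(X) = X_{(1)} \otimes X_{(2)}$, I would evaluate each summand as a product of two entries of $\pi$ and note that the sum over $k$ is precisely a matrix product, telescoping to $\pi(S(X_{(1)}) X_{(2)})^a_b = \varepsilon(X)\delta^a_b$, that is, $\delta^a_b \cdot 1$ in $\Cqg$. The companion relation involving $u^a_k u^{b*}_k$ follows by exactly the same mechanism, using the opposite antipode axiom.

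For the second pair the point is that the $q$-factors are exactly what is needed to insert a copy of $K_{2\rho}$ into the matrix product. Since $K_{2\rho}$ acts diagonally on the weight basis with $\pi(K_{2\rho})^i_j = \delta^i_j q^{(2\rho, \lambda_j)}$, pulling out a prefactor of $q^{-(2\rho, \lambda_b)}$ will let me rewrite
\[
\sum_k q^{(2\rho, \lambda_k - \lambda_b)} \pi(X_{(1)})^k_b \pi(S(X_{(2)}))^a_k = q^{-(2\rho,\lambda_b)} \pi\bigl(S(X_{(2)}) K_{2\rho} X_{(1)}\bigr)^a_b.
\]
The relation $K_{2\rho} X_{(1)} = S^2(X_{(1)}) K_{2\rho}$ followed by anti-multiplicativity of $S$ then reduces the argument of $\pi$ to $\varepsilon(X) K_{2\rho}$, and the diagonal value $\pi(K_{2\rho})^a_b = \delta^a_b q^{(2\rho, \lambda_b)}$ cancels the prefactor, leaving $\delta^a_b \cdot 1$. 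The last identity is obtained by the mirror argument, using instead the commutation $X_{(2)} K_{2\rho}^{-1} = K_{2\rho}^{-1} S^2(X_{(2)})$ and the other antipode axiom.

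The main obstacle is really only notational: keeping track of which index sits upstairs and which downstairs, and tracking how the $q$-weights interact with the diagonal action of $K_{2\rho}$. Conceptually, the four relations are nothing more than the two antipode axioms packaged via the $S^2$-twist formula, translated into the language of the matrix coefficients.
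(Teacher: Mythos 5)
Your argument is correct, but it runs on the opposite side of the duality from the paper's. The paper works entirely inside $\Cqg$: it applies the antipode identity $S(a_{(1)})a_{(2)} = \varepsilon(a)1 = a_{(1)}S(a_{(2)})$ to the elements $a = u^a_b$ and $a = S(u^a_b)$ of the coordinate ring, and then invokes $S(u^i_j) = u^{j*}_i$ and $S^2(u^i_j) = q^{(2\rho,\lambda_i-\lambda_j)}u^i_j$ as known facts to convert the result into the stated form. You instead evaluate each identity against an arbitrary $X \in \Uqg$, translate everything into matrix entries of $\pi$ via the dictionary $u^{k*}_a(X) = \pi(S(X))^a_k$ (which you correctly justify from unitarity and the definition of $\phi^*$), and let the sums telescope using the antipode axiom in $\Uqg$ together with $S^2(X) = K_{2\rho}XK_{2\rho}^{-1}$. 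The two proofs are dual packagings of the same mechanism, and each of your four computations checks out, including the insertion of $\pi(K_{2\rho})$ into the matrix product and the cancellation of the prefactors $q^{\mp(2\rho,\lambda_b)}$. What your route buys is self-containedness: the $q$-factors emerge directly from the diagonal action of $K_{2\rho}$ rather than being imported through the unproved identity $S^2(u^i_j) = q^{(2\rho,\lambda_i-\lambda_j)}u^i_j$; what the paper's route buys is brevity and the fact that it never leaves the coordinate ring, which matches the style of the surrounding computations.
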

\begin{proof}
Recall that in a Hopf algebra we have $S(a_{(1)}) a_{(2)} = \varepsilon(a) 1 = a_{(1)} S(a_{(2)})$
for all $a$. We apply this identity to $u^a_b$. We have
$\Delta(u^a_b) = \sum_k u^a_k \otimes u^k_b$ and $\varepsilon(u_{b}^{a})=\delta_{b}^{a}$.
Then
\[
\sum_{k}S(u_{k}^{a})u_{b}^{k}=\delta_{b}^{a}1=\sum_{k}u_{k}^{a}S(u_{b}^{k}).
\]
Using $S(u_{j}^{i})=u_{i}^{j*}$ it can be rewritten as claimed.

Next we apply the above identity to $S(u^a_b)$. For the counit and the coproduct we have $\varepsilon(S(u^a_b)) = \delta^a_b$
and $\Delta(S(u^a_b)) = \sum_k S(u^k_b) \otimes S(u^a_k)$.
Then we obtain
\[
\sum_k S^2(u^k_b) S(u^a_k) = \delta^a_b 1 = \sum_k S(u^k_b) S^2(u^a_k).
\]
We need to use the identity $S^2(u^i_j) = q^{(2 \rho, \lambda_i - \lambda_j)} u^i_j$.
Plugging this in we find
\[
\sum_{k}q^{(2\rho,\lambda_{k}-\lambda_{b})}u_{b}^{k}S(u_{k}^{a})=\delta_{b}^{a}1=\sum_{k}q^{(2\rho,\lambda_{a}-\lambda_{k})}S(u_{b}^{k})u_{k}^{a}.
\]
Using $S(u_{j}^{i})=u_{i}^{j*}$ it can be rewritten as claimed.
\end{proof}

\begin{remark}
We could avoid working with orthonormal bases and express everything in terms of $S(u^j_i) = u^{i*}_j$, but this makes many of the following formulae less clear.
\end{remark}

We will now define some "matrix units" in terms of the elements $u^i_j$ and $u^{i*}_j$.
For any $m, n$ corresponding to the basis of $V(\Lambda)$, we define the matrices $\mathsf{M}_{m}^{n}, \mathsf{N}_{m}^{n} \in \mathrm{Mat}(\CqG)$ by
\[
(\mathsf{M}^n_m)^i_j := u^{m*}_i u^n_j, \quad
(\mathsf{N}^n_m)^i_j := u^i_m u^{j*}_n.
\]

\begin{proposition}
\label{prop:mat-unit1}
1) The matrices $\{ \mathsf{M}_{m}^{n} \}_{m, n}$ are linearly independent and satisfy
\[
(\mathsf{M}_{m}^{n})^{*}=\mathsf{M}_{n}^{m},\quad\mathsf{M}_{m}^{n}\mathsf{M}_{o}^{p}=\delta_{o}^{n}\mathsf{M}_{m}^{p},\quad\mathrm{Tr}(\pi(K_{2\rho}^{-1})\mathsf{M}_{m}^{n})=\delta_{m}^{n}q^{-(2\rho,\lambda_{m})}.
\]
2) The matrices $\{ \mathsf{N}_{m}^{n} \}_{m, n}$ are linearly independent and satisfy
\[
(\mathsf{N}_{m}^{n})^{*}=\mathsf{N}_{n}^{m},\quad\mathsf{N}_{m}^{n}\mathsf{N}_{o}^{p}=\delta_{o}^{n}\mathsf{N}_{m}^{p},\quad\mathrm{Tr}(\pi(K_{2\rho})\mathsf{N}_{m}^{n})=\delta_{m}^{n}q^{(2\rho,\lambda_{m})}.
\]
\end{proposition}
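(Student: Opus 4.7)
The plan is to verify each of the six claimed relations by direct entry-wise calculation, appealing in each case to one of the four identities supplied by the preceding lemma. The two ``unweighted'' identities will handle the idempotency-type relations $\mathsf{M}^n_m \mathsf{M}^p_o = \delta^n_o \mathsf{M}^p_m$ and its $\mathsf{N}$-analogue, while the two ``$q^{(2\rho,\cdot)}$-weighted'' identities will give the trace formulae. The main organizational task is matching up stars, indices and orientations so that the correct identity is applied in each case.

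For the adjoint relations I would compute entry by entry, using the convention $(A^*)^i_j = (A^j_i)^*$ together with $(ab)^* = b^* a^*$ and the compatibility of the symbols $u^i_j$ and $u^{i*}_j$ under the $*$-operation already in use. For the multiplication relation, expanding the matrix product gives
\[
(\mathsf{M}^n_m \mathsf{M}^p_o)^i_j = u^{m*}_i \Bigl( \sum_k u^n_k u^{o*}_k \Bigr) u^p_j,
\]
and the inner sum equals $\delta^n_o \cdot 1$ by the second equality in the first pair of identities of the lemma. The calculation for $\mathsf{N}^n_m \mathsf{N}^p_o$ produces instead the inner sum $\sum_k u^{k*}_n u^k_o$, which equals $\delta^n_o \cdot 1$ by the companion identity $\sum_k u^{k*}_a u^k_b = \delta^a_b 1$.

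For the trace formulae I would exploit the fact that the orthonormal basis $\{v_i\}$ consists of weight vectors with $K_{2\rho} \triangleright v_i = q^{(2\rho,\lambda_i)} v_i$, so $\pi(K_{2\rho})$ is diagonal with entries $q^{(2\rho,\lambda_i)}$. This gives
\[
\mathrm{Tr}\bigl(\pi(K_{2\rho}^{-1}) \mathsf{M}^n_m\bigr) = \sum_k q^{-(2\rho,\lambda_k)} u^{m*}_k u^n_k,
\]
which, after pulling out the factor $q^{-(2\rho,\lambda_n)}$, is precisely the weighted sum $\sum_k q^{(2\rho,\lambda_n - \lambda_k)} u^{m*}_k u^n_k = \delta^n_m 1$ from the lemma, yielding $\delta^n_m q^{-(2\rho,\lambda_m)}$. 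The trace $\mathrm{Tr}(\pi(K_{2\rho}) \mathsf{N}^n_m)$ is handled symmetrically by the remaining weighted identity $\sum_k q^{(2\rho,\lambda_k - \lambda_m)} u^k_m u^{k*}_n = \delta^n_m 1$, where the weight sits on the other side.

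Linear independence of the $\mathsf{M}^n_m$ and of the $\mathsf{N}^n_m$ would then follow from linear independence of the matrix coefficients of irreducible representations: a vanishing linear combination $\sum_{m,n} c_{mn} \mathsf{M}^n_m = 0$, inspected in any fixed entry $(i,j)$, produces a dependence among the products $u^{m*}_i u^n_j$, and these are independent by standard Peter--Weyl considerations for $\mathbb{C}_q[G]$ applied to the tensor product representation $V(\Lambda)^* \otimes V(\Lambda)$. There is no genuine obstacle in the proposition; the proof is purely a bookkeeping exercise, and the only mild risk is mis-pairing indices when selecting which of the four identities from the lemma to apply.
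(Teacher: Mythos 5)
The computational parts of your proposal --- the adjoint relations, the product relations via the two unweighted antipode identities $\sum_k u^a_k u^{b*}_k = \delta^a_b 1$ and $\sum_k u^{k*}_a u^k_b = \delta^a_b 1$, and the trace formulae via the two $q^{(2\rho,\cdot)}$-weighted identities --- match the paper's proof step for step, including which identity is invoked where. No issue there.

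The linear independence argument, however, has a genuine gap. You reduce to a single fixed entry $(i,j)$ and claim that the $N^2$ products $u^{m*}_i u^n_j$ are linearly independent ``by standard Peter--Weyl considerations applied to $V(\Lambda)^* \otimes V(\Lambda)$.'' Peter--Weyl gives linear independence of matrix coefficients of \emph{irreducible} representations, but $V(\Lambda)^* \otimes V(\Lambda)$ is never irreducible (it always contains the trivial representation), and the coefficient map $W^* \otimes W \to \Cqg$ of a reducible $W$ has a nonzero kernel: its image has dimension $\sum_\mu (\dim V(\mu))^2$ summed over the \emph{distinct} irreducible constituents, which is strictly less than $(\dim W)^2$. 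So the full family of $N^4$ products $u^{m*}_i u^n_j$ is in fact linearly dependent, and the independence of the $N^2$ products with $(i,j)$ fixed --- which is what your reduction requires --- does not follow from what you cite. The statement you actually need is weaker, namely that $\sum_{m,n} c^m_n u^{m*}_i u^n_j = 0$ \emph{for all} $(i,j)$ forces $c = 0$, and the paper gets this with a one-line trick that sidesteps all representation theory: since $\varepsilon$ is an algebra homomorphism with $\varepsilon(u^{m*}_i) = \delta^m_i$ and $\varepsilon(u^n_j) = \delta^n_j$, applying $\varepsilon$ to the $(i,j)$ entry of the vanishing linear combination yields $c^i_j = 0$ directly. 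You should replace the Peter--Weyl appeal with this counit evaluation (or otherwise justify the fixed-entry independence, which would require a separate argument).
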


\begin{proof}
1) First we prove linear independence. Suppose $\sum_{m,n}c_{n}^{m}\mathsf{M}_{m}^{n}=0$.
Taking the counit of the $(i,j)$-component we get $\sum_{m,n}c_{n}^{m}\varepsilon(\mathsf{M}_{m}^{n})_{j}^{i}=c_{j}^{i}$,
where we have used $\varepsilon(\mathsf{M}_{m}^{n})_{j}^{i}=\delta_{m}^{i}\delta_{j}^{n}$.
This shows that $c_{j}^{i}=0$ for all $i$ and $j$, that is the
matrices $\mathsf{M}_{m}^{n}$ are linearly independent. Next it is
immediate that $(\mathsf{M}_{m}^{n})_{j}^{i*}=u_{j}^{n*}u_{i}^{m}=(\mathsf{M}_{n}^{m})_{i}^{j}$.
For the product relation we compute
\[
\sum_{k}(\mathsf{M}_{m}^{n})_{k}^{i}(\mathsf{M}_{o}^{p})_{j}^{k}=u_{i}^{m*}\left(\sum_{k}u_{k}^{n}u_{k}^{o*}\right)u_{j}^{p}=\delta_{o}^{n}u_{i}^{m*}u_{j}^{p}=\delta_{o}^{n}(\mathsf{M}_{m}^{p})_{j}^{i}.
\]
Finally for the $q$-trace relation we have
\[
\sum_{i}q^{-(2\rho,\lambda_{i})}(\mathsf{M}_{m}^{n})_{i}^{i}=q^{-(2\rho,\lambda_{n})}\sum_{i}q^{(2\rho,\lambda_{n}-\lambda_{i})}u_{i}^{m*}u_{i}^{n}=\delta_{m}^{n}q^{-(2\rho,\lambda_{m})}.
\]

2) Linear independence is proven in the case of $\{ \mathsf{M}_{m}^{n} \}_{m, n}$. Similarly we have $(\mathsf{N}_{m}^{n})_{j}^{i*}=(\mathsf{N}_{n}^{m})_{i}^{j}$. For
the product relation we compute
\[
\sum_{k}(\mathsf{N}_{m}^{n})_{k}^{i}(\mathsf{N}_{o}^{p})_{j}^{k}=u_{m}^{i}\left(\sum_{k}u_{n}^{k*}u_{o}^{k}\right)u_{p}^{j*}=\delta_{o}^{n}u_{m}^{i}u_{p}^{j*}=\delta_{o}^{n}(\mathsf{N}_{m}^{p})_{j}^{i}.
\]
Finally for the $q$-trace relation we have
\[
\sum_{i}q^{(2\rho,\lambda_{i})}(\mathsf{N}_{m}^{n})_{i}^{i}=q^{(2\rho,\lambda_{m})}\sum_{i}q^{(2\rho,\lambda_{i}-\lambda_{m})}u_{m}^{i}u_{n}^{i*}=\delta_{m}^{n}q^{(2\rho,\lambda_{m})}.
\qedhere
\]
\end{proof}
\begin{remark}
These relations are essentially those of the matrix units $m_{m}^{n}$
which are $1$ in the $(m,n)$-entry and zero elsewhere, that is $(m_{m}^{n})_{j}^{i}=\delta_{m}^{i}\delta_{n}^{j}$
(with respect to an orthonormal basis).
\end{remark}

We can build more general matrices in terms of these matrix units. In particular within this setting it is easy to state when such matrices correspond to projections.

\begin{lemma}
Let $\mathsf{P} = \sum_{m, n} c^m_n \mathsf{M}^n_m$ and $\mathsf{Q} = \sum_{m, n} c^m_n \mathsf{N}^n_m$.
We have that $\mathsf{P}$ and $\mathsf{Q}$ are projections if and
only if $\sum_\ell c^m_\ell c^\ell_n = c^m_n$. They are orthogonal projections if moreover $\overline{c^m_n} = c^n_m$.
\end{lemma}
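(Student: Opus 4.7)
The plan is to exploit directly the two key structural facts recorded in \cref{prop:mat-unit1}, namely the multiplication rule $\mathsf{M}_m^n \mathsf{M}_o^p = \delta_o^n \mathsf{M}_m^p$ (and the analogous one for the $\mathsf{N}$'s) together with linear independence of the matrix units and the involution rule $(\mathsf{M}_m^n)^* = \mathsf{M}_n^m$. Both claims then reduce to comparing coefficients of the matrix units. Since the arguments for $\mathsf{P}$ and $\mathsf{Q}$ are essentially identical, I would state the computation carefully for $\mathsf{P}$ and only remark that the $\mathsf{N}$-case is obtained by the same manipulation with $\mathsf{M}$ replaced by $\mathsf{N}$.

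For the first claim, I would simply expand
\[
\mathsf{P}^2 = \sum_{m,n,o,p} c_n^m c_p^o\, \mathsf{M}_m^n \mathsf{M}_o^p = \sum_{m,n,p} c_n^m c_p^n\, \mathsf{M}_m^p,
\]
using the multiplication rule to collapse the middle indices. Comparing with $\mathsf{P} = \sum_{m,p} c_p^m \mathsf{M}_m^p$ and invoking the linear independence of the $\mathsf{M}_m^p$ established in \cref{prop:mat-unit1}, the equality $\mathsf{P}^2 = \mathsf{P}$ is equivalent to $\sum_\ell c_\ell^m c_n^\ell = c_n^m$ for all $m,n$. The computation for $\mathsf{Q}$ is verbatim the same, thanks to the identical multiplication rule for the $\mathsf{N}_m^n$.

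For the orthogonality claim, I would compute
\[
\mathsf{P}^* = \sum_{m,n} \overline{c_n^m}\, (\mathsf{M}_m^n)^* = \sum_{m,n} \overline{c_n^m}\, \mathsf{M}_n^m,
\]
then relabel $m \leftrightarrow n$ to rewrite this as $\sum_{m,n} \overline{c_m^n}\, \mathsf{M}_m^n$. Linear independence again gives $\mathsf{P}^* = \mathsf{P}$ iff $\overline{c_m^n} = c_n^m$, i.e.\ $\overline{c_n^m} = c_m^n$ after relabelling, which is the stated condition.

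There is essentially no obstacle: the proof is a direct manipulation of indices, and the only point requiring any care is keeping track of upper versus lower indices and making sure the relabelling step for the $*$-condition produces exactly the symmetry stated in the lemma. I would therefore keep the writeup short, roughly one short display per claim, and leave the $\mathsf{Q}$-computation to the reader as a "same proof" remark.
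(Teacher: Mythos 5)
Your proposal is correct and follows essentially the same route as the paper: expand $\mathsf{P}^2$ via the multiplication rule $\mathsf{M}_m^n\mathsf{M}_o^p=\delta_o^n\mathsf{M}_m^p$, compare coefficients using linear independence, and use $(\mathsf{M}_m^n)^*=\mathsf{M}_n^m$ for the self-adjointness condition, with the $\mathsf{Q}$-case deferred to the identical relations for the $\mathsf{N}_m^n$. The only cosmetic difference is that you work at the level of whole matrices while the paper writes out the $(i,j)$-entries.
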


\begin{proof}
For the relation $\mathsf{P}^2 = \mathsf{P}$ we use the product rule for $\mathsf{M}^n_m$ and compute
\[
\sum_k \mathsf{P}_{k}^{i} \mathsf{P}_{j}^{k} = \sum_{m,n,o,p} c_{n}^{m} c_{p}^{o} \sum_{k} (\mathsf{M}_{m}^{n})_{k}^{i} (\mathsf{M}_{o}^{p})_{j}^{k} = \sum_{m,n,p} c_{n}^{m} c_{p}^{n} (\mathsf{M}_{m}^{p})_{j}^{i}.
\]
Since the matrix units $\mathsf{M}^n_m$ are linearly independent, we obtain $\sum_{k} \mathsf{P}_{k}^{i} \mathsf{P}_{j}^{k} = \mathsf{P}_{j}^{i}$ if and only if $\sum_{n} c_{n}^{m} c_{p}^{n} = c_{p}^{m}$.
For the orthogonality condition we compute
\[
(\mathsf{P})_{j}^{i*}=\sum_{m,n}\overline{c_{n}^{m}}(\mathsf{M}_{m}^{n})_{j}^{i*}=\sum_{m,n}\overline{c_{n}^{m}}(\mathsf{M}_{n}^{m})_{i}^{j}.
\]
Hence $(\mathsf{P})_{j}^{i*} = (\mathsf{P})_{i}^{j}$ if and only if
$\overline{c_{n}^{m}} = c_{m}^{n}$.
Finally we observe that we get the same results for $\mathsf{Q}$, since the matrix units $\mathsf{N}_{m}^{n}$ have the same product rule and action of $*$ as $\mathsf{M}_{m}^{n}$.
\end{proof}

We will use the notations $\mathsf{P} = \sum_{m, n} c^m_n \mathsf{M}^n_m$ and $\mathsf{Q} = \sum_{m, n} c^m_n \mathsf{N}^n_m$ throughout the paper.

\subsection{Modular element}

A natural twist to consider is the \emph{modular automorphism} $\theta : \mathbb{C}_q[G] \to \mathbb{C}_q[G]$ (or its inverse) coming from the Haar state. It is given explicitly by
\[
\theta(a) = K_{2 \rho} \triangleright a \triangleleft K_{2 \rho}.
\]
It satisfies the following property: if we denote by $h : \mathbb{C}_q[G] \to \mathbb{C}$ the Haar state, then we have $h(a b) = h(\theta(b) a)$ for all $a, b \in  \mathbb{C}_q[G]$.
It is useful to consider a more general situation.

\begin{notation}
\label{not:sigma-lambda}
Given two weights $\lambda, \lambda^\prime$ we write $\sigma_{\lambda, \lambda^\prime}(a) := K_\lambda \triangleright a \triangleleft K_{\lambda^\prime}$.
\end{notation}

Therefore $\sigma_{\lambda, \lambda^\prime}$ expresses a general action coming from the Cartan generators.
In the next lemma we compute this action on the entries of the matrices $\mathsf{M}^n_m$ and $\mathsf{N}^n_m$.

\begin{lemma}
\label{lem:action-k}
We have the formulae
\[
\begin{split}
\sigma_{\lambda, \lambda^\prime} (\mathsf{M}^n_m)^i_j
& = q^{-(\lambda, \lambda_i - \lambda_j)}  q^{-(\lambda^\prime, \lambda_m - \lambda_n)} (\mathsf{M}^n_m)^i_j, \\
\sigma_{\lambda, \lambda^\prime} (\mathsf{N}^n_m)^i_j
& = q^{(\lambda, \lambda_m - \lambda_n)} q^{(\lambda^\prime, \lambda_i - \lambda_j)} (\mathsf{N}^n_m)^i_j.
\end{split}
\]
\end{lemma}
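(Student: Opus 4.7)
The plan is to reduce the statement to the elementary action of a single group-like element $K_\mu$ on individual matrix coefficients, and then assemble the resulting scalars.

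First, I would observe that both the left and right actions by $K_\lambda$ are algebra automorphisms of $\Cqg$. This follows from the fact that $K_\lambda$ is group-like: $\Delta(K_\lambda) = K_\lambda \otimes K_\lambda$ implies $K_\lambda \triangleright (\phi\psi) = (K_\lambda \triangleright \phi)(K_\lambda \triangleright \psi)$, and analogously on the right. Consequently $\sigma_{\lambda,\lambda^\prime}$ is multiplicative, so it suffices to compute its action on each of the two factors in $(\mathsf{M}^n_m)^i_j = u^{m*}_i u^n_j$ and $(\mathsf{N}^n_m)^i_j = u^i_m u^{j*}_n$.

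Second, I would specialize the formulae (\ref{eq:action-ugen}) and (\ref{eq:actionR-ugen}) to $X = K_\mu$. Since $\{v_k\}$ is a weight basis with $K_\mu v_k = q^{(\mu,\lambda_k)} v_k$, the matrix $\pi(K_\mu)$ is diagonal with entries $q^{(\mu,\lambda_k)}$, and since $S(K_\mu) = K_\mu^{-1}$ also $\pi(S(K_\mu))$ is diagonal with entries $q^{-(\mu,\lambda_k)}$. Substituting yields the four elementary rules
\[
K_\mu \triangleright u^a_b = q^{(\mu,\lambda_b)} u^a_b, \quad u^a_b \triangleleft K_\mu = q^{(\mu,\lambda_a)} u^a_b,
\]
\[
K_\mu \triangleright u^{a*}_b = q^{-(\mu,\lambda_b)} u^{a*}_b, \quad u^{a*}_b \triangleleft K_\mu = q^{-(\mu,\lambda_a)} u^{a*}_b.
\]

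Third, combining these with multiplicativity gives, for the $\mathsf{M}$ case,
\[
\sigma_{\lambda,\lambda^\prime}(u^{m*}_i u^n_j) = q^{-(\lambda,\lambda_i)} q^{-(\lambda^\prime,\lambda_m)} q^{(\lambda,\lambda_j)} q^{(\lambda^\prime,\lambda_n)} u^{m*}_i u^n_j,
\]
which regrouped is exactly $q^{-(\lambda,\lambda_i-\lambda_j)} q^{-(\lambda^\prime,\lambda_m-\lambda_n)}(\mathsf{M}^n_m)^i_j$. An entirely analogous computation on $u^i_m u^{j*}_n$ yields the formula for $\mathsf{N}$, with the signs reversed because the starred factor now carries the indices $j,n$ rather than $i,m$.

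There is no real obstacle here; the only place requiring care is bookkeeping the indices, in particular noting that for $u^{a*}_b$ the \emph{right} action by $K_\mu$ picks up the weight of the \emph{upper} index $a$ (with a minus sign), whereas for $u^a_b$ it picks up the weight of $a$ with a plus sign, and similarly for the left actions on the lower index. Once these four elementary rules are correctly recorded the result is immediate.
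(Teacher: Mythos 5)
Your proposal is correct and follows essentially the same route as the paper: compute the (diagonal) action of $K_\mu$ on the elementary coefficients $u^a_b$ and $u^{a*}_b$, use that $K_\mu$ is group-like so $\sigma_{\lambda,\lambda'}$ is multiplicative on these products, and collect the scalars. The only cosmetic difference is that you read the starred rules directly from \eqref{eq:action-ugen} and \eqref{eq:actionR-ugen}, whereas the paper derives them via the identity $X\triangleright a^{*}=(S(X)^{*}\triangleright a)^{*}$ together with $S(K_\lambda)^{*}=K_\lambda^{-1}$; both yield the same four elementary rules and the same bookkeeping.
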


\begin{proof}
We immediately compute $K_\lambda \triangleright u^a_b \triangleleft K_{\lambda^\prime} = q^{(\lambda, \lambda_b)} q^{(\lambda^\prime, \lambda_a)} u^a_b$.
Next recall the identities
\[
X\triangleright a^{*}=(S(X)^{*}\triangleright a)^{*},\quad a^{*}\triangleleft X=(a\triangleleft S(X)^{*})^{*}.
\]
Since $S(K_{\lambda})^{*}=K_{\lambda}^{-1}$ we have
\[
K_{\lambda}\triangleright u_{b}^{a*}\triangleleft K_{\lambda^{\prime}}=(K_{\lambda}^{-1}\triangleright u_{b}^{a}\triangleleft K_{\lambda^{\prime}}^{-1})^{*}=q^{-(\lambda,\lambda_{b})}q^{-(\lambda^{\prime},\lambda_{a})}u_{b}^{a*}.
\]
Therefore for $(\mathsf{M}^n_m)^i_j = u_{i}^{m*}u_{j}^{n}$
we have
\[
K_\lambda \triangleright (\mathsf{M}^n_m)^i_j \triangleleft K_{\lambda^{\prime}} = q^{-(\lambda, \lambda_i - \lambda_j)}  q^{-(\lambda^\prime, \lambda_m - \lambda_n)} (\mathsf{M}^n_m)^i_j.
\]
Similarly for $(\mathsf{N}^n_m)^i_j = u_{m}^{i}u_{n}^{j*}$
we have
\[
K_{\lambda} \triangleright (\mathsf{N}^n_m)^i_j \triangleleft K_{\lambda^{\prime}}=q^{(\lambda,\lambda_{m}-\lambda_{n})}q^{(\lambda^{\prime},\lambda_{i}-\lambda_{j})} (\mathsf{N}^n_m)^i_j.
\qedhere
\]
\end{proof}

We now investigate the case of the modular automorphism $\theta$.

\begin{proposition}
\label{prop:conjugation1}
Suppose that $c^m_n = 0$ if $\lambda_m \neq \lambda_n$. Then we have the relations
\[
\theta(\mathsf{P})=\pi(K_{2\rho}^{-1})\mathsf{P}\pi(K_{2\rho}),\quad\theta(\mathsf{Q})=\pi(K_{2\rho})\mathsf{Q}\pi(K_{2\rho}^{-1}),
\]
where the automorphism $\theta$ is applied entrywise.
\end{proposition}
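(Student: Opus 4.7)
The plan is to reduce the claim to a direct comparison of diagonal $q$-factors, using \cref{lem:action-k} on one side and the fact that $\pi(K_{2\rho})$ acts diagonally on the chosen orthonormal basis of weight vectors on the other side.

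First, I would observe that the modular automorphism is $\theta = \sigma_{2\rho, 2\rho}$ in the notation of \cref{def:sigma-lambda}. Applying \cref{lem:action-k} entrywise yields
\[
\theta(\mathsf{M}^n_m)^i_j = q^{-(2\rho, \lambda_i - \lambda_j)} q^{-(2\rho, \lambda_m - \lambda_n)} (\mathsf{M}^n_m)^i_j.
\]
Now the hypothesis $c^m_n = 0$ whenever $\lambda_m \neq \lambda_n$ is exactly what is needed to kill the second factor: in the expansion $\mathsf{P} = \sum_{m,n} c^m_n \mathsf{M}^n_m$, any surviving term has $\lambda_m = \lambda_n$, so $q^{-(2\rho, \lambda_m - \lambda_n)} = 1$. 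Consequently
\[
\theta(\mathsf{P})^i_j = q^{-(2\rho, \lambda_i - \lambda_j)} \mathsf{P}^i_j.
\]

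Second, I would compute the right-hand side. Since the orthonormal basis $\{v_i\}$ is chosen to consist of weight vectors (with weights $\lambda_i$), the matrix $\pi(K_{2\rho})$ is diagonal with entries $\pi(K_{2\rho})^a_b = \delta^a_b q^{(2\rho, \lambda_a)}$. Therefore
\[
\bigl(\pi(K_{2\rho}^{-1}) \mathsf{P} \pi(K_{2\rho})\bigr)^i_j = q^{-(2\rho, \lambda_i)} \mathsf{P}^i_j q^{(2\rho, \lambda_j)} = q^{-(2\rho, \lambda_i - \lambda_j)} \mathsf{P}^i_j,
\]
which matches the previous expression entry by entry.

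Third, I would repeat this for $\mathsf{Q}$: from \cref{lem:action-k} we get $\theta(\mathsf{N}^n_m)^i_j = q^{(2\rho, \lambda_m - \lambda_n)} q^{(2\rho, \lambda_i - \lambda_j)} (\mathsf{N}^n_m)^i_j$; the hypothesis on the coefficients again eliminates the first factor, leaving $\theta(\mathsf{Q})^i_j = q^{(2\rho, \lambda_i - \lambda_j)} \mathsf{Q}^i_j$, which equals $(\pi(K_{2\rho}) \mathsf{Q} \pi(K_{2\rho}^{-1}))^i_j$ by the same diagonal computation with signs reversed.

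There is no real obstacle here; the only mild subtlety is that the claim uses the orthonormal basis convention set up earlier, which implicitly fixes the $v_i$ to be weight vectors so that $\pi(K_{2\rho})$ is genuinely diagonal. The whole argument is just matching the weight-diagonal twist produced by $\theta$ with conjugation by the diagonal matrix $\pi(K_{2\rho})$, and the restriction on $c^m_n$ is precisely designed so that the "second index" $q$-factor from \cref{lem:action-k} does not contribute.
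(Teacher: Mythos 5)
Your proposal is correct and follows essentially the same route as the paper: both apply \cref{lem:action-k} entrywise, use the hypothesis on the $c^m_n$ to eliminate the $q^{\mp(2\rho,\lambda_m-\lambda_n)}$ factor, and identify the remaining $q^{\mp(2\rho,\lambda_i-\lambda_j)}$ factor with conjugation by the diagonal matrix $\pi(K_{2\rho})$. The only cosmetic difference is the order of operations (you sum over $m,n$ before matching with the conjugation, the paper absorbs the diagonal factors into each matrix unit first), which changes nothing.
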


\begin{proof}
Using the formulae in \cref{lem:action-k} we compute
\[
\begin{split}
\theta(\mathsf{M}^n_m)^i_j
& = q^{-(2\rho, \lambda_i - \lambda_j)} q^{-(2\rho, \lambda_m - \lambda_n)} (\mathsf{M}^n_m)^i_j \\
& = q^{-(2\rho, \lambda_m - \lambda_n)} \pi(K_{2\rho}^{-1})_{i}^{i} (\mathsf{M}^n_m)^i_j \pi(K_{2\rho})_{j}^{j}.
\end{split}
\]
Therefore for the matrix $\mathsf{P}$ we obtain
\[
\theta(\mathsf{P}_{j}^{i}) = \sum_{m, n} c^m_n q^{-(2\rho, \lambda_m - \lambda_n)} \pi(K_{2\rho}^{-1})_{i}^{i}(\mathsf{M}_{m}^{n})_{j}^{i}\pi(K_{2\rho})_{j}^{j}.
\]
Under the assumption on the coefficients $c^m_n$ we have the identity $c^m_n q^{-(2\rho, \lambda_m - \lambda_n)} = c^m_n$, hence we obtain the result.
Similarly, for the matrix $\mathsf{Q}$ we compute
\[
\begin{split}
\theta(\mathsf{N}_{m}^{n})_{j}^{i} & =q^{(2\rho,\lambda_{m}-\lambda_{n})}q^{(2\rho,\lambda_{i}-\lambda_{j})}(\mathsf{N}_{m}^{n})_{j}^{i}\\
 & =q^{(2\rho,\lambda_{m}-\lambda_{n})}\pi(K_{2\rho})_{i}^{i}(\mathsf{N}_{m}^{n})_{j}^{i}\pi(K_{2\rho}^{-1})_{j}^{j}.
\end{split}
\]
Then the conclusion is immediate.
\end{proof}

\begin{remark}
The condition on the coefficients $c_{n}^{m}$ is clearly not necessary
for $\mathsf{P}$ to be an eigenvector, as can be seen by considering
$\mathsf{P} = \mathsf{M}_{m}^{n}$ with $\lambda_m \neq \lambda_n$. It is also easy to see that not
all $\mathsf{P}$ are eigenvectors. For example consider $\mathsf{P}=\mathsf{M}_{m}^{n}+\mathsf{M}_{n}^{m}$.
Then
\[
\theta(\mathsf{P}_{j}^{i})=q^{-(2\rho,\lambda_{i}-\lambda_{j})}(q^{(2\rho,\lambda_{n}-\lambda_{m})}(\mathsf{M}_{m}^{n})_{j}^{i}+q^{-(2\rho,\lambda_{n}-\lambda_{m})}(\mathsf{M}_{n}^{m})_{j}^{i}).
\]
This is an eigenvector if and only if $(2\rho,\lambda_{n}-\lambda_{m})=0$.
\end{remark}

\section{Quantum flag manifolds and equivariant K-theory}
\label{sec:flag}

In this section we will connect the results obtained in the previous section with quantum flag manifolds and equivariant $K$-theory.
First we show that the condition we assumed for the coefficients $c^m_n$ is precisely the condition for the matrices $\mathsf{P}$ and $\mathsf{Q}$ to descend to the appropriate quantum full flag manifolds.
Secondly, we show that the projections built from the matrix units $\mathsf{M}^n_m$ and $\mathsf{N}^n_m$ belong to appropriate equivariant $K$-theory groups.

\subsection{Connection with full flag manifolds}

Classically the full flag manifold $G/T$ is defined as the quotient of $G$ by the maximal torus $T$.
Functions on these manifolds are then functions on $G$ which are invariant under the action of $T$. Equivalently these are functions which are invariant under the action of the Cartan subalgebra.
In the quantum setting the role of the Cartan generators is played by the generators $K_\lambda$.
This discussion naturally leads to define (functions on) quantum full flag manifolds as follows
\[
\mathbb{C}_q[G/T] := \{a \in \mathbb{C}_q[G] : K_\lambda \triangleright a = a\}, \quad
\mathbb{C}_q[T\backslash G] := \{a \in \mathbb{C}_q[G] : a \triangleleft K_\lambda = a\}.
\]
As a reference for these quantum homogeneous spaces see \cite{quantum-flag}, for example.
Recall that we have commuting left and right actions of $\Uqg$ on the quantized coordinate ring $\Cqg$. Hence we get a right action of $\Uqg$ on $\mathbb{C}_q[G/T]$ and a left action of $\Uqg$ on $\mathbb{C}_q[T\backslash G]$.

We will now show that the condition on the coefficients $c^m_n$ appearing in \cref{prop:conjugation1} can be interpreted geometrically as follows: it is precisely the condition for the matrices $\mathsf{P}$ and $\mathsf{Q}$ to descend to the appropriate quantum full flag manifolds.

\begin{proposition}
\label{prop:proj-flag}
Let $\mathsf{P} = \sum_{m, n} c^m_n \mathsf{M}^n_m$ and $\mathsf{Q} = \sum_{m, n} c^m_n \mathsf{N}^n_m$.
Then:

1) we have $\mathsf{P} \in \mathrm{Mat}(\flagR)$ if and only if $c^m_n = 0$ for $\lambda_m \neq \lambda_n$,

2) we have $\mathsf{Q} \in \mathrm{Mat}(\flagL)$ if and only if $c^m_n = 0$ for $\lambda_m \neq \lambda_n$.
\end{proposition}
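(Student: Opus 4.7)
The plan is to apply \cref{lem:action-k} to extract the eigenvalues of the matrix units under the relevant one-sided action, then use the linear independence in \cref{prop:mat-unit1} to decouple the vanishing condition into scalar conditions on the coefficients $c^m_n$. Membership in $\mathrm{Mat}(\flagR)$ is the entrywise condition $\mathsf{P} \triangleleft K_{\lambda^\prime} = \mathsf{P}$ for every weight $\lambda^\prime$ in the root lattice, and similarly $K_\lambda \triangleright \mathsf{Q} = \mathsf{Q}$ is what is needed for $\mathsf{Q} \in \mathrm{Mat}(\flagL)$. So each direction in each part is a short calculation of the same shape.

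For part 1, I would set $\lambda = 0$ in \cref{lem:action-k} to obtain $(\mathsf{M}^n_m)^i_j \triangleleft K_{\lambda^\prime} = q^{-(\lambda^\prime, \lambda_m - \lambda_n)} (\mathsf{M}^n_m)^i_j$. The invariance $\mathsf{P} \triangleleft K_{\lambda^\prime} - \mathsf{P} = 0$ then reads
\[
\sum_{m,n} c^m_n \bigl( q^{-(\lambda^\prime, \lambda_m - \lambda_n)} - 1 \bigr) \mathsf{M}^n_m = 0,
\]
and the linear independence of the $\mathsf{M}^n_m$ yields, for every pair $(m,n)$, either $c^m_n = 0$ or $q^{-(\lambda^\prime, \lambda_m - \lambda_n)} = 1$ for all $\lambda^\prime$ in the root lattice. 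The latter condition, under the standing generic-$q$ assumption and using the fact that the root lattice pairs non-degenerately with the weight lattice, forces $\lambda_m = \lambda_n$. The converse direction is immediate: when $c^m_n$ is supported on pairs with $\lambda_m = \lambda_n$, every surviving eigenvalue is $1$ and $\mathsf{P}$ is fixed entrywise.

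Part 2 is the mirror argument. Setting $\lambda^\prime = 0$ in \cref{lem:action-k} gives $K_\lambda \triangleright (\mathsf{N}^n_m)^i_j = q^{(\lambda, \lambda_m - \lambda_n)} (\mathsf{N}^n_m)^i_j$, and the same linear-independence argument applied to the $\mathsf{N}^n_m$ reproduces the same scalar condition on the coefficients. The only mildly delicate step is the passage from $q^{(\lambda^\prime, \lambda_m - \lambda_n)} = 1$ for every $\lambda^\prime$ in the root lattice to $\lambda_m = \lambda_n$; this relies on $q$ not being a root of unity, which is the implicit generic-parameter assumption in this setup. Everything else is a direct unwinding of definitions.
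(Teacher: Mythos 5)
Your proposal is correct and follows essentially the same route as the paper: apply \cref{lem:action-k} to diagonalize the one-sided $K_\lambda$-action on the matrix units, then use the linear independence from \cref{prop:mat-unit1} to reduce the invariance condition to the scalar statement $q^{-(\lambda,\lambda_m-\lambda_n)}c^m_n = c^m_n$, and conclude by non-degeneracy of the pairing. Your explicit remark that passing from $q^{(\lambda,\lambda_m-\lambda_n)}=1$ to $(\lambda,\lambda_m-\lambda_n)=0$ requires $q$ not to be a root of unity is a point the paper leaves implicit, but it is the same argument.
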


\begin{proof}
1) We have to check when all the entries $\mathsf{P}^i_j$ belong to $\flagR$. Recall that from \cref{lem:action-k} we have $(\mathsf{M}^n_m)^i_j \triangleleft  K_\lambda = q^{-(\lambda, \lambda_m - \lambda_n)} (\mathsf{M}^n_m)^i_j$.
Then we compute
\[
\mathsf{P}^i_j \triangleleft  K_\lambda = \sum_{m, n} c^m_n (\mathsf{M}^n_m)^i_j \triangleleft  K_\lambda = \sum_{m, n} q^{-(\lambda, \lambda_m - \lambda_n)} c^m_n (\mathsf{M}^n_m)^i_j.
\]
Now consider the condition $\mathsf{P}^i_j \triangleleft  K_\lambda = \mathsf{P}^i_j$.
Since the matrices $\mathsf{M}^n_m$ are linearly independent we must have $q^{-(\lambda, \lambda_m - \lambda_n)} c^m_n = c^m_n$ for all $m$ and $n$. But $(\lambda, \lambda_m - \lambda_n) = 0$ for all $\lambda$ holds if and only if $\lambda_m = \lambda_n$, by non-degeneracy. Hence we must have $c^m_n = 0$ for $\lambda_m \neq \lambda_n$.

2) The proof for $\mathsf{Q}$ is completely analogous and we omit it.
\end{proof}

\begin{remark}
It is clear from the result above that $\mathsf{M}^m_m \in \mathrm{Mat}(\flagR)$ and $\mathsf{N}^m_m \in \mathrm{Mat}(\flagL)$.
These are $N \times N$ matrices, where $N$ is the dimension of the fixed representation $V(\Lambda)$.
\end{remark}

\subsection{Equivariant K-theory}

In this subsection we show that the projections built using the matrix units $\mathsf{M}^n_m$ and $\mathsf{N}^n_m$ belong to appropriate equivariant $K$-theory groups.
The setting we consider is based on \cite{netu04} (see also the references therein for the general case of coactions of locally compact quantum groups), but we follow the presentation given in \cite[Section 3]{wag09} (without taking into account the $*$-structure, for simplicity).

Let $\mathcal{U}$ be a Hopf algebra and $\mathcal{B}$ be a right $\mathcal{U}$-module algebra. Let $\rho^\circ: \mathcal{U}^\circ \to \mathrm{End}(\mathbb{C}^N)$ be a representation of the opposite algebra $\mathcal{U}^\circ$, or equivalently we take $\rho^\circ$ to be an anti-homomorphism.
We have an embedding of $\mathrm{Mat}_{N \times N}(\mathbb{C}) \otimes \mathcal{B}$ into $\mathrm{End}(\mathbb{C}^N \otimes \mathcal{B})$ given by $T \otimes b \mapsto T \otimes L_b$, where $L_b$ denotes left multiplication by $b \in \mathcal{B}$.
Working in this setup, we can write the action of a matrix in $\mathrm{Mat}_{N \times N}(\mathcal{B}) \cong \mathrm{Mat}_{N \times N}(\mathbb{C}) \otimes \mathcal{B}$ on a column vector in $\mathcal{B}^N \cong \mathbb{C}^N \otimes \mathcal{B}$ in terms of the usual rules of matrix multiplication.

The algebra $\mathrm{End}(V \otimes \mathcal{B})$ becomes a left $\mathcal{U}^\circ$-module with respect to the left adjoint action of $\mathcal{U}^\circ$. It can be shown that, with respect to this action, the algebra $\mathrm{Mat}_{N \times N}(\mathcal{\mathcal{B}})$ becomes a left $\mathcal{U}^\circ$-module subalgebra of $\mathrm{End}(V \otimes \mathcal{B})$. The explicit action $\mathrm{ad}^\circ_L$ of $\mathcal{U}^\circ$ is given by
\[
\mathrm{ad}^\circ_L(X)(M) = \rho^\circ(X_{(1)}) (M \triangleleft X_{(2)}) \rho^\circ(S^{-1}(X_{(3)})),
\quad X \in \mathcal{U}, \ M \in \mathrm{Mat}_{N \times N}(\mathcal{B}).
\]
Here $M \triangleleft X$ means the action of $X$ on each entry of the matrix $M$. Note that we can consider equivalently $\mathrm{Mat}_{N \times N}(\mathcal{B})$ as a right $\mathcal{U}$-module subalgebra.

A matrix $M \in \mathrm{Mat}_{N \times N}(\mathcal{B})$ is called \emph{right $\mathcal{U}$-invariant} if there exists a representation $\rho: \mathcal{U}^\circ \to \mathrm{End}(\mathbb{C}^N)$ such that $\mathrm{ad}^\circ_L (X)(M) = \varepsilon(X) M$ for all $X \in \mathcal{U}$.
We can introduce a notion of (Murray-von Neumann) equivalence on invariant projections, see \cite[Definition 3.1]{wag09}.
The Grothendieck group of equivalence classes of invariant projections is denoted by ${}^\mathcal{U} K_0(\mathcal{B})$, which we call the \emph{$\mathcal{U}$-equivariant $K_0$-group of $\mathcal{B}$}.

The situation is analogous if we consider $\mathcal{B}$ to be a left $\mathcal{U}$-module algebra. In this case the algebra $\mathrm{Mat}_{N \times N}(\mathcal{B})$ becomes a right $\mathcal{U}^\circ$-module subalgebra and the action is given by
\[
\mathrm{ad}^\circ_R(X)(M) = \rho^\circ(S^{-1}(X_{(1)})) (S^{-2}(X_{(2)}) \triangleright M) \rho^\circ(X_{(3)}),
\quad X \in \mathcal{U}, \ M \in \mathrm{Mat}_{N \times N}(\mathcal{B}).
\]
Equivalently $\mathrm{Mat}_{N \times N}(\mathcal{\mathcal{B}})$ is a left $\mathcal{U}$-module subalgebra.
The condition for a matrix $M \in \mathrm{Mat}_{N \times N}(\mathcal{B})$ to be \emph{left $\mathcal{U}$-invariant} is then $\mathrm{ad}^\circ_R (X)(M) = \varepsilon(X) M$ for all $X \in \mathcal{U}$.
The corresponding $\mathcal{U}$-equivariant $K_0$-group is denoted by $K_0(\mathcal{B})^\mathcal{U}$.

We are interested in the situation where $\mathcal{U} = \Uqg$ and $\mathcal{B} = \Cqg$, which is naturally a $\Uqg$-bimodule algebra. Taking an $N$-dimensional representation $V(\Lambda)$ of $\Uqg$, we obtain elements $\mathsf{M}^n_m, \mathsf{N}^n_m \in \mathrm{Mat}_{N \times N} (\Cqg)$ by \cref{prop:mat-unit1}.

\begin{lemma}
\label{lem:equiv-mat}
Let $X \in \Uqg$. Then we have
\[
X\triangleright\mathsf{M}_{m}^{n}=\pi(S(X_{(1)}))\mathsf{M}_{m}^{n}\pi(X_{(2)}),\quad\mathsf{N}_{m}^{n}\triangleleft X=\pi(X_{(1)})\mathsf{N}_{m}^{n}\pi(S(X_{(2)})).
\]
\end{lemma}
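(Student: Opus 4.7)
The plan is to derive both identities by a direct, entrywise calculation, using the fact that $\Cqg$ is a $\Uqg$-bimodule algebra together with the explicit action formulas \eqref{eq:action-ugen} and \eqref{eq:actionR-ugen} on matrix coefficients. Concretely, since the left action coming from $(X \triangleright \phi)(Y) = \phi(YX)$ satisfies
\[
X \triangleright (\phi \psi) = (X_{(1)} \triangleright \phi)(X_{(2)} \triangleright \psi),
\]
and analogously $(\phi \psi) \triangleleft X = (\phi \triangleleft X_{(1)})(\psi \triangleleft X_{(2)})$, we can split the action on the products defining the matrix units $(\mathsf{M}^n_m)^i_j = u^{m*}_i u^n_j$ and $(\mathsf{N}^n_m)^i_j = u^i_m u^{j*}_n$.

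For the first claim I would compute
\[
X \triangleright (\mathsf{M}^n_m)^i_j = (X_{(1)} \triangleright u^{m*}_i)(X_{(2)} \triangleright u^n_j),
\]
apply the two identities from \eqref{eq:action-ugen} (namely $X \triangleright u^{m*}_i = \sum_k \pi(S(X))^i_k u^{m*}_k$ and $X \triangleright u^n_j = \sum_\ell \pi(X)^\ell_j u^n_\ell$), and collect the result as
\[
\sum_{k,\ell} \pi(S(X_{(1)}))^i_k \, (\mathsf{M}^n_m)^k_\ell \, \pi(X_{(2)})^\ell_j,
\]
which is precisely the $(i,j)$-entry of $\pi(S(X_{(1)})) \mathsf{M}^n_m \pi(X_{(2)})$. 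The second claim is handled symmetrically: split
\[
(\mathsf{N}^n_m)^i_j \triangleleft X = (u^i_m \triangleleft X_{(1)})(u^{j*}_n \triangleleft X_{(2)}),
\]
apply the two formulas in \eqref{eq:actionR-ugen}, and recognize the result as the $(i,j)$-entry of $\pi(X_{(1)}) \mathsf{N}^n_m \pi(S(X_{(2)}))$.

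There is no real obstacle here: once one has the covariance of the product under the coproduct and the explicit expressions for the actions on $u^i_j$ and $u^{i*}_j$, the two equalities drop out by matching indices. The only point that requires a bit of attention is the placement of the antipodes and the order of the factors in the matrix product; this has to be read off carefully from \eqref{eq:action-ugen} and \eqref{eq:actionR-ugen}, where the antipode appears precisely on the $*$-ed matrix coefficients and the index patterns dictate whether $\pi(\cdot)$ multiplies on the left or right of $\mathsf{M}^n_m$ (respectively $\mathsf{N}^n_m$).
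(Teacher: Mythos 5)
Your proposal is correct and is essentially the paper's own proof: both split the action on the product $u^{m*}_i u^n_j$ (resp.\ $u^i_m u^{j*}_n$) via the coproduct, insert the explicit formulas \eqref{eq:action-ugen} and \eqref{eq:actionR-ugen}, and read off the resulting sums as matrix products. The index bookkeeping you describe matches the paper's computation exactly.
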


\begin{proof}
Using the formulae in \eqref{eq:action-ugen} we compute
\[
\begin{split}
X\triangleright(\mathsf{M}_{m}^{n})_{j}^{i} & =(X_{(1)}\triangleright u_{i}^{m*})(X_{(2)}\triangleright u_{j}^{n}
= \sum_{k,\ell}\pi(S(X_{(1)}))_{k}^{i}u_{k}^{m*}\pi(X_{(2)})_{j}^{\ell}u_{\ell}^{n}\\
 & =\sum_{k,\ell}\pi(S(X_{(1)}))_{k}^{i}(\mathsf{M}_{m}^{n})_{\ell}^{k}\pi(X_{(2)})_{j}^{\ell}.
\end{split}
\]
Similarly for the right action, using the formulae in \eqref{eq:actionR-ugen}, we compute
\[
\begin{split}(\mathsf{N}_{m}^{n})_{j}^{i}\triangleleft X & =(u_{m}^{i}\triangleleft X_{(1)})(u_{n}^{j*}\triangleleft X_{(2)})
= \sum_{k,\ell}\pi(X_{(1)})_{k}^{i}u_{m}^{k}\pi(S(X_{(2)}))_{j}^{\ell}u_{n}^{\ell*}\\
 & =\sum_{k,\ell}\pi(X_{(1)})_{k}^{i}(\mathsf{N}_{m}^{n})_{\ell}^{k}\pi(S(X_{(2)}))_{j}^{\ell}.
\end{split}
\]
Rewriting these identities in matrix notation gives the result.
\end{proof}

We can now easily show that these elements are invariant.

\begin{proposition}
1) Let $\rho^{\circ}: \Uqg \to \mathrm{End}(V)$ be the anti-homomorphism defined by
\[
\rho^{\circ}(X)=\pi(K_{2\rho}^{-1}S(X)K_{2\rho}).
\]
Then $\mathsf{M}^n_m$ is left $\Uqg$-invariant, that is $\mathrm{ad}_{R}^{\circ}(X)(\mathsf{M}_{m}^{n})=\varepsilon(X)\mathsf{M}_{m}^{n}$ for all $X \in \Uqg$.

2) Let $\rho^{\circ}: \Uqg \to \mathrm{End}(V)$ be the anti-homomorphism defined by
\[
\rho^{\circ}(X)=\pi(K_{2\rho}S^{-1}(X)K_{2\rho}^{-1}).
\]
Then $\mathsf{N}^n_m$ is right $\Uqg$-invariant, that is $\mathrm{ad}_{L}^{\circ}(X)(\mathsf{N}_{m}^{n})=\varepsilon(X)\mathsf{N}_{m}^{n}$ for all $X \in \Uqg$.
\end{proposition}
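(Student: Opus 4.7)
The plan is to verify both invariance statements by direct computation, combining the explicit formulae of \cref{lem:equiv-mat} for $X \triangleright \mathsf{M}^n_m$ and $\mathsf{N}^n_m \triangleleft X$, the definition of $\rho^\circ$, the conjugation rule $S^2(Y) = K_{2\rho} Y K_{2\rho}^{-1}$, and the antipode--counit identities $X_{(1)} S(X_{(2)}) = \varepsilon(X) 1 = S(X_{(1)}) X_{(2)}$.

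For part 1, I would expand
\[
\mathrm{ad}^\circ_R(X)(\mathsf{M}^n_m) = \rho^\circ(S^{-1}(X_{(1)})) \bigl( S^{-2}(X_{(2)}) \triangleright \mathsf{M}^n_m \bigr) \rho^\circ(X_{(3)})
\]
and substitute \cref{lem:equiv-mat} for the middle factor (which further splits $X_{(2)}$ via the coproduct) together with the explicit form of $\rho^\circ$. Collecting terms produces an expression of shape $\pi(A) \mathsf{M}^n_m \pi(B)$, where $A$ and $B$ involve $K_{2\rho}^{\pm 1}$ interleaved with Sweedler factors of $X$ dressed by $S$, $S^{-1}$ or $S^{-2}$. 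To show $A = \varepsilon(X) 1 = B$, I would migrate each $K_{2\rho}^{\pm 1}$ past its neighbouring $S^{\pm 1}(X_{(\cdot)})$ using variants of the conjugation rule (notably $K_{2\rho} S^{-1}(Y) = S(Y) K_{2\rho}$ and $S^{-2}(Y) K_{2\rho}^{-1} = K_{2\rho}^{-1} Y$), after which both $A$ and $B$ become conjugates of $X_{(1)} S(X_{(2)})$ by $K_{2\rho}$ and collapse via the antipode axiom.

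Part 2 follows by an entirely symmetric computation: expand $\mathrm{ad}^\circ_L(X)(\mathsf{N}^n_m)$, substitute the formula for $\mathsf{N}^n_m \triangleleft X_{(2)}$ from \cref{lem:equiv-mat} and the new $\rho^\circ(Y) = \pi(K_{2\rho} S^{-1}(Y) K_{2\rho}^{-1})$. A first simplification is immediately visible: $\rho^\circ(S^{-1}(X_{(4)}))$ reduces to $\pi(X_{(4)})$ because the two successive $S^{-1}$'s combine with $K_{2\rho}$-conjugation to cancel. The same commutation moves then push $K_{2\rho}^{\pm 1}$ outward, and the collapse this time proceeds through the companion identity $S(X_{(1)}) X_{(2)} = \varepsilon(X) 1$, applied once on each side.

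The principal obstacle I anticipate is not conceptual but notational: the combination $\rho^\circ \circ S^{-1}$, $S^{-2} \triangleright$, $\rho^\circ$ in $\mathrm{ad}^\circ_R$ (and its mirror in $\mathrm{ad}^\circ_L$) produces a dense Sweedler expression with $S$, $S^{-1}$ and $S^{-2}$ distributed over different tensor factors. The two choices of $\rho^\circ$ in the statement are evidently arranged precisely so that, after the $K_{2\rho}^{\pm 1}$ are commuted past these antipodes, the net number of antipodes on each remaining factor lines up with one of the two antipode--counit axioms. Identifying the correct sequence of commutation moves is the only delicate point; once found, the rest is mechanical.
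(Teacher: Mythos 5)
Your plan is correct and follows essentially the same route as the paper: expand $\mathrm{ad}^\circ_R$ (resp.\ $\mathrm{ad}^\circ_L$), insert the formula of \cref{lem:equiv-mat}, and use $K_{2\rho}S^{-1}(Y)=S(Y)K_{2\rho}$ together with $S^{-2}(Y)=K_{2\rho}^{-1}YK_{2\rho}$ to reduce each side to a $K_{2\rho}$-conjugate of $X_{(1)}S(X_{(2)})$ in part 1 and of $S(X_{(1)})X_{(2)}$ in part 2. The simplification $\rho^\circ(S^{-1}(X_{(\cdot)}))=\pi(X_{(\cdot)})$ you flag in part 2 is exactly the one the paper exploits.
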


\begin{proof}
1) It is immediate to check that $\rho^{\circ}(X)=\pi(K_{2\rho}^{-1}S(X)K_{2\rho})$ is an anti-homomorphism. Plugging this expression into the definition of $\mathrm{ad}_{R}^{\circ}$ we get
\[
\mathrm{ad}_{R}^{\circ}(X)(\mathsf{M}_{m}^{n}) =\pi(K_{2\rho}^{-1}X_{(1)}K_{2\rho})(S^{-2}(X_{(2)})\triangleright\mathsf{M}_{m}^{n})\pi(K_{2\rho}^{-1}S(X_{(3)})K_{2\rho}).
\]
From \cref{lem:equiv-mat} it follows that $S^{-2}(X)\triangleright\mathsf{M}_{m}^{n}=\pi(S^{-1}(X_{(1)}))\mathsf{M}_{m}^{n}\pi(S^{-2}(X_{(2)}))$.
Then
\[
\mathrm{ad}_{R}^{\circ}(X)(\mathsf{M}_{m}^{n}) =\pi(K_{2\rho}^{-1}X_{(1)}K_{2\rho}S^{-1}(X_{(2)}))\mathsf{M}_{m}^{n}\pi(S^{-2}(X_{(3)})K_{2\rho}^{-1}S(X_{(4)})K_{2\rho}).
\]
Recall that $S^{2}(X)=K_{2\rho}XK_{2\rho}^{-1}$. From this we obtain the relations $K_{2\rho}S^{-1}(X)=S(X)K_{2\rho}$ and $S^{-2}(X)=K_{2\rho}^{-1}XK_{2\rho}$.
Plugging them in we get
\[
\begin{split}\mathrm{ad}_{R}^{\circ}(X)(\mathsf{M}_{m}^{n}) & =\pi(K_{2\rho}^{-1}X_{(1)}S(X_{(2)})K_{2\rho})\mathsf{M}_{m}^{n}\pi(K_{2\rho}^{-1}X_{(3)}S(X_{(4)})K_{2\rho})\\
 & =\pi(K_{2\rho}^{-1}\varepsilon(X_{(1)})K_{2\rho})\mathsf{M}_{m}^{n}\pi(K_{2\rho}^{-1}\varepsilon(X_{(2)})K_{2\rho})\\
 & =\varepsilon(X_{(1)})\mathsf{M}_{m}^{n}\varepsilon(X_{(2)})=\varepsilon(X)\mathsf{M}_{m}^{n}.
\end{split}
\]

2) Similarly to the previous case it is easy to check that $\rho^{\circ}(X)=\pi(K_{2\rho}S^{-1}(X)K_{2\rho}^{-1})$ is an anti-homomorphism. Plugging this expression into the definition of $\mathrm{ad}_{L}^{\circ}$ we get
\[
\mathrm{ad}_{L}^{\circ}(X)(\mathsf{N}_{m}^{n}) =\pi(K_{2\rho}S^{-1}(X_{(1)})K_{2\rho}^{-1})(\mathsf{N}_{m}^{n}\triangleleft X_{(2)})\pi(K_{2\rho}S^{-2}(X_{(3)})K_{2\rho}^{-1}).
\]
Using $\mathsf{N}_{m}^{n}\triangleleft X=\pi(X_{(1)})\mathsf{N}_{m}^{n}\pi(S(X_{(2)}))$ from \cref{lem:equiv-mat} we get
\[
\mathrm{ad}_{L}^{\circ}(X)(\mathsf{N}_{m}^{n}) =\pi(K_{2\rho}S^{-1}(X_{(1)})K_{2\rho}^{-1}X_{(2)})\mathsf{N}_{m}^{n}\pi(S(X_{(3)})K_{2\rho}S^{-2}(X_{(4)})K_{2\rho}^{-1}).
\]
We use the identities $S^{-1}(X)K_{2\rho}^{-1}=K_{2\rho}^{-1}S(X)$
and $S^{-2}(X)=K_{2\rho}^{-1}XK_{2\rho}$. Then
\[
\begin{split}\mathrm{ad}_{L}^{\circ}(X)(\mathsf{N}_{m}^{n}) & =\pi(S(X_{(1)})X_{(2)})\mathsf{N}_{m}^{n}\pi(S(X_{(3)})X_{(4)})\\
 & =\varepsilon(X_{(1)})\mathsf{N}_{m}^{n}\varepsilon(X_{(2)})=\varepsilon(X)\mathsf{N}_{m}^{n}.
\qedhere
\end{split}
\]
\end{proof}

\begin{corollary}
Let $\mathsf{P} = \sum_{m, n} c^m_n \mathsf{M}^n_m$ and $\mathsf{Q} = \sum_{m, n} c^m_n \mathsf{N}^n_m$. Suppose they are projections. Then $\mathsf{P} \in K_0(\Cqg)^{\Uqg}$ and $\mathsf{Q} \in {}^{\Uqg}K_0(\Cqg)$.
\end{corollary}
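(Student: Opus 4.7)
The plan is to leverage the preceding proposition together with the linearity of the adjoint actions $\mathrm{ad}_R^\circ$ and $\mathrm{ad}_L^\circ$. The point is that the invariance condition requires the existence of a single anti-homomorphism $\rho^\circ$ for which the adjoint action is trivial, and all the building blocks $\mathsf{M}^n_m$ (respectively $\mathsf{N}^n_m$) live in $\mathrm{Mat}_{N \times N}(\Cqg)$ built from the \emph{same} irreducible representation $V(\Lambda)$. Consequently the same $\rho^\circ$ works for every $\mathsf{M}^n_m$, and by linearity for any linear combination.

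More concretely, first I would fix the anti-homomorphism $\rho^\circ(X) = \pi(K_{2\rho}^{-1} S(X) K_{2\rho})$ of the preceding proposition (part 1). For each $X \in \Uqg$, the map $M \mapsto \mathrm{ad}_R^\circ(X)(M)$ is $\mathbb{C}$-linear on $\mathrm{Mat}_{N \times N}(\Cqg)$, since it is a sum of compositions of left/right multiplications by fixed matrices together with the entrywise right action of $X_{(2)}$. Since we already know $\mathrm{ad}_R^\circ(X)(\mathsf{M}^n_m) = \varepsilon(X) \mathsf{M}^n_m$ for every pair $(m,n)$, linearity yields
\[
\mathrm{ad}_R^\circ(X)(\mathsf{P}) = \sum_{m,n} c^m_n \, \mathrm{ad}_R^\circ(X)(\mathsf{M}^n_m) = \varepsilon(X) \sum_{m,n} c^m_n \mathsf{M}^n_m = \varepsilon(X)\mathsf{P}.
\]
Hence $\mathsf{P}$ is left $\Uqg$-invariant in the sense of the preceding subsection. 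Combined with the hypothesis that $\mathsf{P}$ is a projection, this shows that $\mathsf{P}$ defines an element of the equivariant group $K_0(\Cqg)^{\Uqg}$.

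The argument for $\mathsf{Q}$ is entirely parallel: take $\rho^\circ(X) = \pi(K_{2\rho} S^{-1}(X) K_{2\rho}^{-1})$ as in part 2 of the preceding proposition, use linearity of $\mathrm{ad}_L^\circ$ to pass from the invariance of each $\mathsf{N}^n_m$ to that of $\mathsf{Q}$, and invoke the projection hypothesis to obtain a class in ${}^{\Uqg}K_0(\Cqg)$.

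There is no real obstacle here; the only subtlety worth flagging in the write-up is that one must use the \emph{same} representation $\rho^\circ$ for all the matrix units appearing in $\mathsf{P}$ (respectively $\mathsf{Q}$), which is automatic because they are all constructed from the single fixed representation $V(\Lambda)$. Once this is observed, the corollary is immediate from the preceding proposition plus the definition of the equivariant $K_0$-group.
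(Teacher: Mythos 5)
Your proposal is correct and follows essentially the same route as the paper, which simply notes that each $\mathsf{M}^n_m$ is left $\Uqg$-invariant and each $\mathsf{N}^n_m$ is right $\Uqg$-invariant by the preceding proposition and that the claim then follows from the definitions; you merely make explicit the linearity of $\mathrm{ad}_R^\circ(X)$ and $\mathrm{ad}_L^\circ(X)$ and the fact that a single $\rho^\circ$ serves all matrix units, both of which the paper leaves implicit.
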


\begin{proof}
By the previous proposition $\mathsf{M}^n_m$ is left $\Uqg$-invariant and $\mathsf{N}^n_m$ is right $\Uqg$-invariant. Then the result follows immediately from the definitions.
\end{proof}

\section{Twisted 2-cycles and 2-cocycles}
\label{sec:twisted}

In this section we will show, using the results of the previous sections, that we obtain classes in the twisted Hochschild homology of $\Cqg$. Moreover these naturally descend to appropriate quantum full flag manifolds.
In order to prove their non-triviality, we introduce some appropriate twisted $2$-cocycles.
The pairings will be computed in the next section.

\subsection{Twisted 2-cycles}

First we deal with the twisted homology classes. Here the natural twist to consider is given by $\theta$, the modular automorphism of the Haar state.

\begin{proposition}
\label{prop:twisted-cycles}
Let $\mathsf{P}, \mathsf{Q}$ be projections and suppose that $c^m_n = 0$ if $\lambda_m \neq \lambda_n$. Define
\[
\begin{split}
C(\mathsf{P}) & := \mathrm{Tr}\left(\pi(K_{2\rho}^{-1})(2\mathsf{P}-\mathrm{Id})\otimes\mathsf{P}\otimes\mathsf{P}\right),\\
C(\mathsf{Q}) & := \mathrm{Tr}\left(\pi(K_{2\rho})(2\mathsf{Q}-\mathrm{Id})\otimes\mathsf{Q}\otimes\mathsf{Q}\right).
\end{split}
\]
Then we obtain classes $[C(\mathsf{P})], \ [C(\mathsf{Q})] \in HH_2^\theta (\mathbb{C}_q[G])$.
\end{proposition}

\begin{proof}
To prove this result we will use \cref{lem:hoch-lemma}. Recall that this states that, given a projection $P \in \mathrm{Mat}(A)$, the $2$-chain $C(P) = \mathrm{Tr}(V(2 P - \mathrm{Id}) \otimes P \otimes P) \in A^{\otimes 3}$ defines a class in $HH^\sigma_2(A)$ if there exists an invertible matrix $V$ such that
\[
\mathrm{Tr} (V P) = c \cdot 1, \quad
\sigma(P) = V P V^{-1}.
\]
The first condition is satisfied, since from \cref{prop:mat-unit1} we have the $q$-trace relations
\[
\mathrm{Tr}(\pi(K_{2\rho}^{-1})\mathsf{P})=q^{-(2\rho,\lambda_{m})},\quad\mathrm{Tr}(\pi(K_{2\rho})\mathsf{Q})=q^{(2\rho,\lambda_{m})}.
\]
The second condition is also satisfied under the assumption that $c^m_n = 0$ if $\lambda_m \neq \lambda_n$. Indeed in this case we have from \cref{prop:conjugation1} that the automorphism $\theta$ is implemented by
\[
\theta (\mathsf{P}) = \pi(K_{2 \rho}^{-1}) \mathsf{P} \pi(K_{2 \rho}), \quad
\theta (\mathsf{Q}) = \pi(K_{2 \rho}) \mathsf{Q} \pi(K_{2 \rho}^{-1}).
\]
Therefore we can apply \cref{lem:hoch-lemma} by setting $V = \pi(K_{2 \rho}^{-1})$ in the case of $\mathsf{P}$ and by setting $V = \pi(K_{2 \rho})$ in the case of $\mathsf{Q}$.
In both cases the twist is given by $\theta$.
\end{proof}

By construction these classes descend to the appropriate full flag manifolds.

\begin{corollary}
\label{cor:class-flag}
With $\mathsf{P}, \mathsf{Q}$ as above we have
\[
[C(\mathsf{P})] \in HH^\theta_2 (\mathbb{C}_q[T \backslash G]), \quad
[C(\mathsf{Q})] \in HH^\theta_2 (\mathbb{C}_q[G / T]).
\]
\end{corollary}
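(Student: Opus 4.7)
The plan is to deduce this corollary by restricting the statement of the preceding theorem to the appropriate coinvariant subalgebras. The bulk of the argument is essentially a compatibility check: verify that the $2$-chain $C(\mathsf{P})$ (resp. $C(\mathsf{Q})$) already lies inside tensor powers of $\flagR$ (resp. $\flagL$), and that the twist $\theta$ preserves these subalgebras.

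First I would invoke \cref{prop:proj-flag}: the standing assumption $c^m_n = 0$ whenever $\lambda_m \neq \lambda_n$ yields $\mathsf{P} \in \mathrm{Mat}(\flagR)$ and $\mathsf{Q} \in \mathrm{Mat}(\flagL)$. Since $\pi(K_{2\rho}^{\pm 1})$ is a diagonal matrix of scalars, each entry of $\pi(K_{2\rho}^{-1})(2\mathsf{P}-\mathrm{Id})$ is a scalar combination of entries of $\mathsf{P}$ and of the unit; hence all three tensor factors appearing in $C(\mathsf{P})$ already lie in $\flagR$, i.e. $C(\mathsf{P}) \in \flagR^{\otimes 3}$. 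The analogous statement holds for $C(\mathsf{Q})$ in $\flagL^{\otimes 3}$.

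Next I would check that $\theta$ restricts to an automorphism of $\flagR$ and $\flagL$. For $a \in \flagR$ and any weight $\mu$, using that the left and right actions of $\Uqg$ on $\Cqg$ commute, one computes $\theta(a) \triangleleft K_\mu = K_{2\rho} \triangleright a \triangleleft K_{2\rho} K_\mu = K_{2\rho} \triangleright a \triangleleft K_{2\rho} = \theta(a)$, because $a \triangleleft K_{2\rho}K_\mu = a$. Thus $\theta(\flagR) \subseteq \flagR$, and the symmetric argument on the other side gives $\theta(\flagL) \subseteq \flagL$.

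Finally, the proof of \cref{lem:hoch-lemma} is formal in the triple $(A, P, V)$: it uses only invertibility of $V$, the implementation $\sigma(P) = V P V^{-1}$, and $\mathrm{Tr}(V P) = c \cdot 1$. The preceding theorem has already verified these three conditions in $\Cqg$ with $V = \pi(K_{2\rho}^{\mp 1})$ and $\sigma = \theta$. Combined with the two observations above --- that $C(\mathsf{P})$ lives in $\flagR^{\otimes 3}$ and that $\theta$ restricts --- the whole boundary calculation yielding $\mathrm{b}_\theta C(\mathsf{P}) = 0$ in the normalized complex takes place entirely inside $\flagR$, producing a class in $HH^\theta_2(\flagR)$; the argument for $\mathsf{Q}$ is identical. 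I do not expect any real obstacle here: the only subtle point is ensuring that the unit terms produced when passing to the normalized complex lie in the smaller algebra, which is immediate since $1 \in \flagR \cap \flagL$.
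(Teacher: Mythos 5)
Your proposal is correct and follows the same route as the paper: the paper's proof simply invokes \cref{prop:proj-flag} to place the entries of $\mathsf{P}$ and $\mathsf{Q}$ in $\flagR$ and $\flagL$ respectively and states that the conclusion follows. Your additional checks --- that $C(\mathsf{P})$ lies in $\flagR^{\otimes 3}$ because $\pi(K_{2\rho}^{\mp 1})$ and $\mathrm{Id}$ have scalar entries, and that $\theta$ restricts to these subalgebras since the left and right actions commute --- are exactly the details the paper leaves implicit, and they are all verified correctly.
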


\begin{proof}
Under our assumptions on the coefficients $c^m_n$, it follows from \cref{prop:proj-flag} that $\mathsf{P}^i_j \in \mathbb{C}_q[T \backslash G]$ and $\mathsf{Q}^i_j \in \mathbb{C}_q[G/T]$.
The conclusion then follows.
\end{proof}

The rest of the paper will be devoted to proving non-triviality of these classes. The strategy will be to define some appropriate twisted 2-cocycles and to show that their pairings are non-zero in most cases.
A word of warning before proceeding: we will prove non-triviality of the class $[C(\mathsf{P})]$ in $HH^\theta_2 (\mathbb{C}_q[T \backslash G])$ and of the class $[C(\mathsf{Q})]$ in $HH^\theta_2 (\mathbb{C}_q[G / T])$, but we will leave open the question of non-triviality of these classes in $HH^\theta_2 (\mathbb{C}_q[G])$.

\subsection{Twisted 2-cocycles}

We now turn to twisted $2$-cocycles.
We start by recalling some properties satisfied by the counit, which will be needed for the definition of the cocycles.

\begin{lemma}
\label{lem:counit}
The counit $\varepsilon : \mathbb{C}_q[G] \to \mathbb{C}$ satisfies the following properties:

1) for any $X \in U_q(\mathfrak{g})$ and $a \in \mathbb{C}_q[G]$ we have $\varepsilon(X\triangleright a)=\varepsilon(a\triangleleft X)$.

2) the restriction $\varepsilon : \mathbb{C}_q[G/T] \to \mathbb{C}$ is invariant under $\sigma_{\lambda, \lambda^\prime}$, that is $\varepsilon \circ \sigma_{\lambda, \lambda^\prime} = \varepsilon$,

3) the restriction $\varepsilon : \mathbb{C}_q[T \backslash G] \to \mathbb{C}$ is invariant under $\sigma_{\lambda, \lambda^\prime}$, that is $\varepsilon \circ \sigma_{\lambda, \lambda^\prime} = \varepsilon$.
\end{lemma}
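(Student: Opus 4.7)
My plan is to treat part (1) as the core computation and to deduce parts (2) and (3) as easy corollaries, since the invariance conditions defining the flag manifolds are exactly what is needed to combine with (1).

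For part (1), I would unfold both sides directly from the definitions given in \cref{sec:notation}. Using $\varepsilon(\phi) = \phi(1)$ together with the formulae $(X \triangleright a)(Y) = a(YX)$ and $(a \triangleleft X)(Y) = a(XY)$, both $\varepsilon(X \triangleright a)$ and $\varepsilon(a \triangleleft X)$ collapse to $a(X)$ upon evaluation at $Y = 1$, so the equality is immediate.

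For parts (2) and (3), the strategy is to use the defining invariance of the flag manifold to eliminate one of the two $K$-actions in $\sigma_{\lambda, \lambda'}(a) = K_\lambda \triangleright a \triangleleft K_{\lambda'}$, then apply part (1) to convert the remaining action into one on the opposite side, where the invariance kills it as well. Concretely, for $a \in \flagL$ we have $K_\mu \triangleright a = a$ for every weight $\mu$, so $\sigma_{\lambda, \lambda'}(a) = a \triangleleft K_{\lambda'}$; then part (1) yields $\varepsilon(a \triangleleft K_{\lambda'}) = \varepsilon(K_{\lambda'} \triangleright a) = \varepsilon(a)$. Part (3) is entirely symmetric, replacing the left-invariance by the right-invariance $a \triangleleft K_\mu = a$ that holds on $\flagR$.

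There is no genuine obstacle here: the argument is purely formal, using only the definitions of $\triangleright$, $\triangleleft$, $\varepsilon$ and the fact that the left and right $\Uqg$-actions on $\Cqg$ commute. The only point to watch is to keep the two conventions $(X \triangleright a)(Y) = a(YX)$ and $(a \triangleleft X)(Y) = a(XY)$ straight, so that part (1) is applied in the correct direction in each of (2) and (3).
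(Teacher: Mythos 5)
Your proposal is correct and follows essentially the same route as the paper: part (1) by evaluating both sides at $1$ using the definitions of the actions and the counit, and parts (2)–(3) by combining part (1) with the defining invariance of $\flagL$ and $\flagR$. The only cosmetic difference is that the paper first uses (1) to merge the two actions into a single left action $K_\lambda K_{\lambda'} \triangleright a$ and then invokes invariance once, whereas you invoke invariance before and after applying (1); the two orderings are trivially equivalent.
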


\begin{proof}
1) Recall that the left and right actions are defined by
\[
(Y \triangleright \phi)(X) = \phi(X Y), \quad
(\phi \triangleleft Y)(X) = \phi(Y X).
\]
The counit is defined by $\varepsilon(\phi) = \phi(1)$. Hence we obtain
\[
\varepsilon(Y \triangleright \phi) = (Y \triangleright \phi)(1) = \phi(Y) = (\phi \triangleleft Y)(1) = \varepsilon(\phi \triangleleft Y).
\]

2) We have to show that $\varepsilon(\sigma_{\lambda, \lambda^\prime}(a)) = \varepsilon(a)$
for all $a \in \mathbb{C}_q[G/T]$. 
Using 1) we get
\[
\varepsilon(\sigma_{\lambda, \lambda^\prime}(a)) = \varepsilon(K_\lambda \triangleright a \triangleleft K_{\lambda^\prime}) = \varepsilon(K_\lambda K_{\lambda^\prime} \triangleright a).
\]
Finally we have $K_\lambda K_{\lambda^\prime} \triangleright a = a$, since $a \in \mathbb{C}_q[G/T]$, 
which shows the invariance.

3) The proof is completely analogous to that of 2).
\end{proof}

Next we have a simple identity for the action of the generators $E_i$ and $F_i$ under the counit.

\begin{lemma}
\label{lem:counit-action}
Let $X = E_i, F_i$ be one of the generators of $U_q(\mathfrak{g})$. Then:

1) we have $\varepsilon(X \triangleright (a b)) = \varepsilon(X \triangleright a) \varepsilon(b) + \varepsilon(a) \varepsilon(X \triangleright b)$ for all $a, b \in \mathbb{C}_q[G/T]$,

2) we have $\varepsilon(X \triangleright (a b)) = \varepsilon(X \triangleright a) \varepsilon(b) + \varepsilon(a) \varepsilon(X \triangleright b)$ for all $a, b \in \mathbb{C}_q[T\backslash G]$.
\end{lemma}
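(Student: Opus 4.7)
The plan is a straightforward computation using the coproducts of $E_i$ and $F_i$ together with multiplicativity of the counit, exploiting the invariance hypotheses on $a,b$ to kill the Cartan factors that appear in the coproduct.

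For part 1, given $a,b \in \mathbb{C}_q[G/T]$, I would start from the general identity $X \triangleright (ab) = \sum (X_{(1)} \triangleright a)(X_{(2)} \triangleright b)$, which follows directly from the definitions $(X \triangleright \phi)(Y) = \phi(YX)$ and $(\phi\psi)(X) = (\phi \otimes \psi)\Delta(X)$. Substituting $\Delta(E_i) = E_i \otimes K_i + 1 \otimes E_i$ gives
\[
E_i \triangleright (ab) = (E_i \triangleright a)(K_i \triangleright b) + a (E_i \triangleright b),
\]
and similarly $\Delta(F_i) = F_i \otimes 1 + K_i^{-1} \otimes F_i$ yields
\[
F_i \triangleright (ab) = (F_i \triangleright a) b + (K_i^{-1} \triangleright a)(F_i \triangleright b).
\]
Applying $\varepsilon$, which is an algebra homomorphism, to both identities and using that $K_i \triangleright b = b$ and $K_i^{-1} \triangleright a = a$ by the left $T$-invariance of $a,b$ immediately produces the claimed Leibniz-type formula.

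For part 2, the analogous computation with the left action would leave an unwanted $K_i$ acting on $b$, and this cannot be removed because $b \in \mathbb{C}_q[T \backslash G]$ is only right $T$-invariant. The trick is to transfer the entire computation to the right action using part 1 of \cref{lem:counit}, namely $\varepsilon(X \triangleright c) = \varepsilon(c \triangleleft X)$ for any $c \in \mathbb{C}_q[G]$. Applied to $c = ab$, this gives
\[
\varepsilon(X \triangleright (ab)) = \varepsilon((ab) \triangleleft X) = \sum \varepsilon\bigl((a \triangleleft X_{(1)})(b \triangleleft X_{(2)})\bigr).
\]
Plugging in $\Delta(E_i)$ and $\Delta(F_i)$ as before and using multiplicativity of $\varepsilon$, the Cartan factors $K_i$ and $K_i^{-1}$ now act on the right and are annihilated by the assumption $a,b \in \mathbb{C}_q[T \backslash G]$. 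A final application of $\varepsilon(c \triangleleft E_i) = \varepsilon(E_i \triangleright c)$, and the analogous identity for $F_i$, converts the surviving terms back into the desired form.

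There is no real obstacle here; the only subtlety is recognising that the symmetry between the two parts is not manifest at the level of computation. For $\mathbb{C}_q[G/T]$ one uses the left action directly because the $K_i$ appearing in $\Delta(E_i)$ and $\Delta(F_i)$ acts on the left and is killed by left invariance; for $\mathbb{C}_q[T \backslash G]$ one first passes to the right action via part 1 of \cref{lem:counit} so that the same $K_i$'s now act on the right and are killed by right invariance.
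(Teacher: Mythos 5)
Your proof is correct and follows essentially the same route as the paper: part 1 is the direct coproduct computation with the Cartan factors killed by left invariance, and part 2 transfers to the right action via $\varepsilon(X \triangleright c) = \varepsilon(c \triangleleft X)$ so that right invariance applies, exactly as in the paper's argument. The only difference is that you spell out the $F_i$ case explicitly, which the paper leaves as "identical".
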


\begin{proof}
Recall that in general for all $X \in U_q(\mathfrak{g})$ and $a, b \in \mathbb{C}_q[G]$ we have
\[
X \triangleright (a b) = (X_{(1)} \triangleright a) (X_{(2)} \triangleright b),
\quad
(a b) \triangleleft X = (a \triangleleft X_{(1)}) (b  \triangleleft X_{(2)}).
\]

1) We will consider $X = E_i$, the other case being identical.
For $a, b \in \mathbb{C}_q[G/T]$ we have
\[
E_i \triangleright (a b) = (E_i \triangleright a) (K_i \triangleright b) + a (E_i \triangleright b)
= (E_i \triangleright a) b + a (E_i \triangleright b),
\]
where we have used the fact that $K_\lambda \triangleright a = a$ for all $a \in \mathbb{C}_q[G/T]$.
Since the counit is a homomorphism we obtain the result.

2) For $a, b \in \mathbb{C}_q[T \backslash G]$ we can proceed as above. Using the fact that $a \triangleleft K_\lambda = a$ for all $a \in \mathbb{C}_q[T \backslash G]$ we easily obtain the identity
\[
\varepsilon((a b) \triangleleft X) = \varepsilon(a  \triangleleft X) \varepsilon(b) + \varepsilon(a) \varepsilon(b  \triangleleft X).
\]
But from \cref{lem:counit} we have $\varepsilon(a \triangleleft X) = \varepsilon(X \triangleright a)$, hence we obtain the same expression.
\end{proof}

We are now ready to define some twisted $2$-cocycles.

\begin{proposition}
Let $X = E_i, F_i$ and $Y = E_j, F_j$ be some of the generators of $U_q(\mathfrak{g})$.
Define the linear functional $\eta_{X, Y} : \mathbb{C}_q[G]^{\otimes 3} \to \mathbb{C}$ by the formula
\[
\eta_{X, Y} (a_0 \otimes a_1 \otimes a_2) := \varepsilon(a_0) \varepsilon(X \triangleright a_1) \varepsilon(Y \triangleright a_2).
\]

1) The restriction to $\mathbb{C}_q[G / T]$ gives a cohomology class $[\eta_{X, Y}] \in HH_{\sigma_{\lambda, \lambda^\prime}}^2 (\mathbb{C}_q[G / T])$.

2) The restriction to $\mathbb{C}_q[T \backslash G]$ gives a cohomology class $[\eta_{X, Y}] \in HH_{\sigma_{\lambda, \lambda^\prime}}^2 (\mathbb{C}_q[T \backslash G])$.
\end{proposition}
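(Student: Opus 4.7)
The plan is to verify directly the twisted Hochschild 2-cocycle condition on a restriction to the flag manifold, using the two ingredients already isolated: the invariance of $\varepsilon$ under $\sigma_{\lambda,\lambda'}$ (\cref{lem:counit}) and the Leibniz-type identity for $\varepsilon \circ (X \triangleright -)$ on the flag manifold (\cref{lem:counit-action}). No new machinery is needed.

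Recall that for an algebra $A$ with twist $\sigma$ the dual coboundary on a 2-cochain $\eta$ reads
\[
(b^*_\sigma \eta)(a_0, a_1, a_2, a_3) = \eta(a_0 a_1, a_2, a_3) - \eta(a_0, a_1 a_2, a_3) + \eta(a_0, a_1, a_2 a_3) - \eta(\sigma(a_3) a_0, a_1, a_2).
\]
First I would plug $\eta_{X,Y}(b_0,b_1,b_2) = \varepsilon(b_0)\,\varepsilon(X \triangleright b_1)\,\varepsilon(Y \triangleright b_2)$ into each of the four terms. Multiplicativity of $\varepsilon$ splits the first term into $\varepsilon(a_0)\varepsilon(a_1)\varepsilon(X \triangleright a_2)\varepsilon(Y \triangleright a_3)$. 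For the middle terms, \cref{lem:counit-action} (valid on both flag manifolds) gives
\[
\varepsilon(X \triangleright (a_1 a_2)) = \varepsilon(X \triangleright a_1)\varepsilon(a_2) + \varepsilon(a_1)\varepsilon(X \triangleright a_2),
\]
and analogously for $Y$ on $a_2 a_3$, so terms two and three each split into two summands. For the last term, \cref{lem:counit} (parts 2 or 3) yields $\varepsilon(\sigma_{\lambda,\lambda'}(a_3)) = \varepsilon(a_3)$, whence it reduces to $\varepsilon(a_0)\varepsilon(a_3)\varepsilon(X \triangleright a_1)\varepsilon(Y \triangleright a_2)$.

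Once this bookkeeping is done, the six resulting monomials in $\varepsilon(\cdot)$ and $\varepsilon(X \triangleright \cdot),\, \varepsilon(Y \triangleright \cdot)$ cancel pairwise: the $\varepsilon(a_0)\varepsilon(a_1)\varepsilon(X \triangleright a_2)\varepsilon(Y \triangleright a_3)$ summand from term one cancels one half of term two; the other half of term two cancels one half of term three; and the remaining half of term three cancels term four (here one uses precisely the twisted boundary replacement $\sigma(a_3) \mapsto a_3$ under the counit). Thus $(b^*_{\sigma_{\lambda,\lambda'}} \eta_{X,Y})(a_0, a_1, a_2, a_3) = 0$ on the flag manifold, so $\eta_{X,Y}$ is a twisted 2-cocycle and determines a class in $HH^2_{\sigma_{\lambda,\lambda'}}(\mathbb{C}_q[G/T])$ in case 1, respectively in $HH^2_{\sigma_{\lambda,\lambda'}}(\mathbb{C}_q[T \backslash G])$ in case 2.

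There is really no conceptual obstacle: the only delicate point is the correct placement of the twist $\sigma_{\lambda,\lambda'}$ in the last term of $b^*_\sigma$, together with the matching of the two auxiliary lemmas to the side (left vs.\ right) of the coinvariance defining the flag manifold. The proofs of parts 1 and 2 are identical in structure, differing only in whether one invokes the left or right version of \cref{lem:counit-action} and the corresponding version of \cref{lem:counit}. Consequently both statements follow from the same cancellation scheme.
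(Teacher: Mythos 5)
Your proposal is correct and follows essentially the same route as the paper: expand the four terms of the twisted coboundary, split the middle two with the Leibniz-type identity of \cref{lem:counit-action}, and cancel the residual pair using $\varepsilon\circ\sigma_{\lambda,\lambda'}=\varepsilon$ from \cref{lem:counit}. The pairwise cancellation scheme you describe is exactly the simplification carried out in the paper's proof.
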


\begin{proof}
1) We have to show that twisted Hochschild differential applied to the restriction of the functional $\eta_{X, Y}$ gives zero. Using the definition of $\mathrm{b}_{\sigma_{\lambda, \lambda^\prime}}$ we get
\[
\begin{split}
(\mathrm{b}_{\sigma_{\lambda, \lambda^\prime}} \eta_{X, Y}) (a_0 \otimes a_1 \otimes a_2 \otimes a_3)
& = \varepsilon(a_0 a_1) \varepsilon(X \triangleright a_2) \varepsilon(Y \triangleright a_3) -\varepsilon(a_0) \varepsilon(X\triangleright(a_1 a_2)) \varepsilon(Y\triangleright a_3) \\
 & + \varepsilon(a_0) \varepsilon(X\triangleright a_1) \varepsilon(Y\triangleright(a_2 a_3))-\varepsilon(\sigma_{\lambda, \lambda^\prime}(a_3) a_0) \varepsilon(X\triangleright a_1) \varepsilon(Y\triangleright a_2).
\end{split}
\]
For $a_1, a_2 \in \mathbb{C}_q[G / T]$ we have the identity $\varepsilon(X \triangleright (a_1 a_2)) = \varepsilon(X \triangleright a_1) \varepsilon(a_2) + \varepsilon(a_1) \varepsilon(X \triangleright a_2)$ by \cref{lem:counit-action}. Similarly for $Y$.
Then this expression simplifies to
\[
\begin{split}
(\mathrm{b}_{\sigma_{\lambda, \lambda^\prime}} \eta_{X, Y}) (a_{0}\otimes a_{1}\otimes a_{2}\otimes a_{3}) & = \varepsilon(a_{0})\varepsilon(X\triangleright a_{1})\varepsilon(Y\triangleright a_{2})\varepsilon(a_{3}) \\
& - \varepsilon(\sigma_{\lambda,\lambda^{\prime}}(a_{3}))\varepsilon(a_{0})\varepsilon(X\triangleright a_{1})\varepsilon(Y\triangleright a_{2}).
\end{split}
\]
Finally we use the fact that $\varepsilon \circ \sigma_{\lambda, \lambda^\prime} = \varepsilon$ on $\mathbb{C}_q[G / T]$, as shown in \cref{lem:counit}.
Then the two terms cancel out and we conclude that $\mathrm{b}_{\sigma_{\lambda, \lambda^\prime}} \eta_{X, Y} = 0$.

2) The proof is completely identical to that of 1), thanks to \cref{lem:counit-action}.
\end{proof}

\begin{remark}
We do not obtain classes in $HH_{\sigma_{\lambda, \lambda^\prime}}^2 (\mathbb{C}_q[G])$
in this way. One of the reasons is that the counit fails to be invariant under the automorphism $\sigma_{\lambda, \lambda^\prime}$ on $\mathbb{C}_q[G]$.
\end{remark}

In the following we will also use the notation
\[
\eta_{a} (a_0 \otimes a_1 \otimes a_2) := \eta_{F_a, E_a} (a_0 \otimes a_1 \otimes a_2) = \varepsilon(a_{0})\varepsilon(F_{a}\triangleright a_{1})\varepsilon(E_{a}\triangleright a_{2}).
\]

\section{Computation of the pairings}
\label{sec:pairing}

In this section we will compute the pairings $\eta_a(C(\mathsf{P}))$ and $\eta_a(C(\mathsf{Q}))$.
Since this computation will be somewhat lengthy, we will split it into several subsections.

\subsection{Some simplifications}

We start by proving some useful lemmata that will be needed to compute the pairings.
First we look at the expression for $\eta_{X, Y}(C(\mathsf{P}))$.

\begin{lemma}
\label{lem:simplifications}
We have the formula
\[
\eta_{X, Y} (C(\mathsf{P})) = \sum_{i, j, k} q^{-(2\rho, \lambda_i)} (2c^i_j - \delta^i_j) \varepsilon(X \triangleright\mathsf{P}^j_k) \varepsilon(Y \triangleright \mathsf{P}^k_i).
\]
\end{lemma}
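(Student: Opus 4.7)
The plan is to unfold the definitions and carry out a direct bookkeeping computation; no real obstacle is expected here, since both $C(\mathsf{P})$ and $\eta_{X,Y}$ are explicitly given.

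First I would expand the trace defining $C(\mathsf{P})$ in coordinates:
\[
C(\mathsf{P}) = \sum_{i,j,k} \bigl[\pi(K_{2\rho}^{-1})(2\mathsf{P}-\mathrm{Id})\bigr]^{i}_{j} \otimes \mathsf{P}^{j}_{k} \otimes \mathsf{P}^{k}_{i}.
\]
The key simplification comes from the fact that we chose $\{v_i\}$ to be an orthonormal \emph{weight} basis, so $\pi(K_{2\rho}^{-1})$ acts diagonally with eigenvalues $q^{-(2\rho,\lambda_i)}$. This immediately yields
\[
\bigl[\pi(K_{2\rho}^{-1})(2\mathsf{P}-\mathrm{Id})\bigr]^{i}_{j} = q^{-(2\rho,\lambda_i)}\bigl(2\mathsf{P}^{i}_{j} - \delta^{i}_{j}\bigr),
\]
and pulling the scalar factor outside of the tensor product turns $\eta_{X,Y}(C(\mathsf{P}))$ into
\[
\sum_{i,j,k} q^{-(2\rho,\lambda_i)}\,\varepsilon\bigl(2\mathsf{P}^{i}_{j} - \delta^{i}_{j}\bigr)\,\varepsilon(X \triangleright \mathsf{P}^{j}_{k})\,\varepsilon(Y \triangleright \mathsf{P}^{k}_{i}).
\]

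The only remaining point is to verify that $\varepsilon(\mathsf{P}^{i}_{j}) = c^{i}_{j}$. Since $\mathsf{P} = \sum_{m,n} c^{m}_{n}\mathsf{M}^{n}_{m}$ with $(\mathsf{M}^{n}_{m})^{i}_{j} = u^{m*}_{i} u^{n}_{j}$, and since $\varepsilon$ is a homomorphism with $\varepsilon(u^{a}_{b}) = \delta^{a}_{b}$ and (using $\phi^*(X) = \overline{\phi(S(X)^*)}$) also $\varepsilon(u^{a*}_{b}) = \delta^{a}_{b}$, one immediately gets $\varepsilon(\mathsf{P}^{i}_{j}) = \sum_{m,n} c^{m}_{n}\delta^{m}_{i}\delta^{n}_{j} = c^{i}_{j}$. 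Substituting $\varepsilon(2\mathsf{P}^{i}_{j} - \delta^{i}_{j}) = 2c^{i}_{j} - \delta^{i}_{j}$ into the display above gives exactly the claimed formula. No step here requires more than a direct verification, so the lemma follows.
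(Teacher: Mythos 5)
Your proof is correct and follows essentially the same route as the paper: expand the trace in coordinates, use that $\pi(K_{2\rho}^{-1})$ is diagonal with entries $q^{-(2\rho,\lambda_i)}$, and evaluate $\varepsilon(\mathsf{P}^i_j) = c^i_j$ via $\varepsilon(u^a_b) = \varepsilon(u^{a*}_b) = \delta^a_b$. The only (immaterial) difference is the order in which the two simplifications are applied.
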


\begin{proof}
Recall that $C(\mathsf{P}) = \mathrm{Tr} \left( \pi(K_{2 \rho}^{-1}) (2 \mathsf{P} - \mathrm{Id}) \otimes \mathsf{P} \otimes \mathsf{P} \right)$.
Writing the trace map explicitly and plugging this expression into $\eta_{X, Y}$ we get
\[
\eta_{X, Y} (C(\mathsf{P})) = \sum_{i, j, k, \ell} \pi(K_{2 \rho}^{-1})^i_j (2 \varepsilon(\mathsf{P}^j_k) - \delta^j_k) \varepsilon(X \triangleright \mathsf{P}^k_\ell) \varepsilon(Y \triangleright \mathsf{P}^{\ell}_i).
\]
From $\varepsilon(\mathsf{M}^n_m)^i_j = \delta^i_m \delta^n_j$ we get $\varepsilon(\mathsf{P}_{k}^{j}) = c_{k}^{j}$.
Moreover we have $\pi(K_{2\rho}^{-1})_{j}^{i}=q^{-(2\rho,\lambda_{i})}\delta_{j}^{i}$.
\end{proof}

For the purpose of computing the pairing $\eta_a(C(\mathsf{Q}))$, it will be useful to consider a generalization of the above expression.
This is given in the next definition.

\begin{notation}
For $X,Y\in U_{q}(\mathfrak{g})$ and any weight $\lambda$ we define
\[
\eta_{X, Y}^\lambda(\mathsf{P}) := \sum_{i,j,k}q^{(\lambda,\lambda_{i})}(2c_{j}^{i}-\delta_{j}^{i})\varepsilon(X\triangleright\mathsf{P}_{k}^{j})\varepsilon(Y\triangleright\mathsf{P}_{i}^{k}).
\]
We will also write $\eta_a^\lambda(\mathsf{P}) := \eta_{F_a, E_a}^\lambda(\mathsf{P})$.
\end{notation}

Clearly we have $\eta_{X, Y} (C(\mathsf{P})) = \eta_{X, Y}^{-2 \rho}(\mathsf{P})$.
Next we will write explicitly the action of the elements $X$ and $Y$ on the matrix elements $\mathsf{P}^j_k$ and $\mathsf{P}^k_i$.

\begin{lemma}
\label{lem:pairing-xy}
We have the formula
\[
\eta_{X, Y}^\lambda(\mathsf{P}) = \sum_{i,j,k,\ell,m,n}(2c_{j}^{i}-\delta_{j}^{i})\pi(S(X_{(1)}))_{k}^{j}c_{\ell}^{k}\pi(X_{(2)}S(Y_{(1)}))_{m}^{\ell}c_{n}^{m}\pi(Y_{(2)}K_{\lambda})_{i}^{n}.
\]
\end{lemma}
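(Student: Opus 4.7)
The plan is to unfold the definition of $\eta^\lambda_{X,Y}(\mathsf{P})$ and convert each $\varepsilon(X\triangleright\mathsf{P}^j_k)$ and $\varepsilon(Y\triangleright\mathsf{P}^k_i)$ into matrix-product form using the equivariance formula from \cref{lem:equiv-mat}. The key observation is that since $\mathsf{P}=\sum_{m,n}c^m_n\mathsf{M}^n_m$, linearity of $X\triangleright(-)$ immediately gives
\[
X\triangleright\mathsf{P}^j_k = \sum_{a,b}\pi(S(X_{(1)}))^j_a\,\mathsf{P}^a_b\,\pi(X_{(2)})^b_k,
\]
and then applying $\varepsilon$, together with the identity $\varepsilon(\mathsf{P}^a_b)=c^a_b$ that is already used in \cref{lem:simplifications}, will replace the middle matrix by the scalar array $c^a_b$.

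After this, I would carry out the same substitution for the $Y$-factor, yielding a six-fold sum of the form
\[
\sum q^{(\lambda,\lambda_i)}(2c^i_j-\delta^i_j)\,\pi(S(X_{(1)}))^j_k\,c^k_\ell\,\pi(X_{(2)})^\ell_p\,\pi(S(Y_{(1)}))^p_m\,c^m_n\,\pi(Y_{(2)})^n_i.
\]
Two small bookkeeping moves then produce the stated formula: first, the sum over $p$ combines $\pi(X_{(2)})^\ell_p\pi(S(Y_{(1)}))^p_m$ into $\pi(X_{(2)}S(Y_{(1)}))^\ell_m$ because $\pi$ is an algebra homomorphism; second, since $\pi(K_\lambda)$ is diagonal with $\pi(K_\lambda)^n_i=q^{(\lambda,\lambda_i)}\delta^n_i$, the scalar $q^{(\lambda,\lambda_i)}$ can be absorbed as $q^{(\lambda,\lambda_i)}\pi(Y_{(2)})^n_i=\pi(Y_{(2)}K_\lambda)^n_i$. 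Relabelling the dummy indices gives exactly the claimed expression.

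There is no genuine obstacle here: the lemma is a purely formal computation. The only thing to be careful about is maintaining the correct order of the Sweedler components of $X$ and $Y$, since $\pi$ is a homomorphism but $\mathrm{ad}$-type actions reverse order through $S$. This is why I would prefer to invoke the already-proven \cref{lem:equiv-mat} rather than re-derive the action via the Leibniz rule on $u^{m*}_iu^n_j$: doing it through \cref{lem:equiv-mat} makes the two $\pi$-factors sandwich $\mathsf{P}$ in exactly the positions needed for the subsequent index contractions.
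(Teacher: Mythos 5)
Your proposal is correct and follows essentially the same route as the paper: expand the action on each factor to get $\varepsilon(X\triangleright\mathsf{P}^j_k)=\sum_{a,b}\pi(S(X_{(1)}))^j_a c^a_b\pi(X_{(2)})^b_k$, contract the shared index to merge $\pi(X_{(2)})$ with $\pi(S(Y_{(1)}))$, and absorb $q^{(\lambda,\lambda_i)}$ as $\pi(K_\lambda)^i_i$ into the last factor. The only (harmless) difference is that you invoke \cref{lem:equiv-mat}, whereas the paper re-derives the same sandwich formula inline from the Leibniz rule on $u^{m*}_i u^n_j$.
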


\begin{proof}
Using $(\mathsf{M}^n_m)^i_j = u^{m*}_i u^n_j$ and the formulae in \eqref{eq:action-ugen} we compute
\[
X \triangleright (\mathsf{M}^n_m)^i_j = (X_{(1)} \triangleright u^{m*}_i) (X_{(2)} \triangleright u^n_j) = \sum_{k, \ell} \pi(S(X_{(1)}))^i_k \pi(X_{(2)})^\ell_j u^{m*}_k u^n_\ell.
\]
Since $\varepsilon(u^i_j) = \varepsilon(u^{i*}_j) = \delta^i_j$ we obtain $\varepsilon(X \triangleright (\mathsf{M}^n_m)^i_j) = \pi(S(X_{(1)}))^i_m \pi(X_{(2)})^n_j$.
Then
\[
\begin{split}
\sum_k \varepsilon(X \triangleright\mathsf{P}_{k}^{j}) \varepsilon(Y\triangleright\mathsf{P}_{i}^{k})
& =\sum_{k,m,n,o,p}c_{n}^{m}c_{p}^{o}\varepsilon(X\triangleright(\mathsf{M}_{m}^{n})_{k}^{j})\varepsilon(Y\triangleright(\mathsf{M}_{o}^{p})_{i}^{k})\\
 & =\sum_{k,m,n,o,p}c_{n}^{m}c_{p}^{o}\pi(S(X_{(1)}))_{m}^{j}\pi(X_{(2)})_{k}^{n}\pi(S(Y_{(1)}))_{o}^{k}\pi(Y_{(2)})_{i}^{p}.
\end{split}
\]
The sum over $k$ can be rewritten as a product of matrices, that
is
\[
\sum_{k}\varepsilon(X\triangleright\mathsf{P}_{k}^{j})\varepsilon(Y\triangleright\mathsf{P}_{i}^{k})=\sum_{m,n,o,p}\pi(S(X_{(1)}))_{m}^{j}c_{n}^{m}\pi(X_{(2)}S(Y_{(1)}))_{o}^{n}c_{p}^{o}\pi(Y_{(2)})_{i}^{p}.
\]
Plugging this back into our expression we obtain
\[
\eta_{X, Y}^\lambda(\mathsf{P}) = \sum_{i,j}q^{(\lambda,\lambda_{i})}(2c_{j}^{i}-\delta_{j}^{i})\sum_{m,n,o,p}\pi(S(X_{(1)}))_{m}^{j}c_{n}^{m}\pi(X_{(2)}S(Y_{(1)}))_{o}^{n}c_{p}^{o}\pi(Y_{(2)})_{i}^{p}.
\]
Finally, since $q^{(\lambda,\lambda_{i})}=\pi(K_{\lambda})_{i}^{i}$ we obtain the result.
\end{proof}

The next lemma assumes the condition on the coefficients $c^m_n$ discussed before.
It will be used to move the Cartan elements $K_\lambda$ across various matrix coefficients.

\begin{lemma}
\label{lem:useful}
Suppose $c^m_n = 0$ if $\lambda_m \neq \lambda_n$. Then for any $X,Y\in U_{q}(\mathfrak{g})$ we have
\[
\pi(X K_\lambda)_{j}^{i} c_{k}^{j} \pi(K_{\lambda^\prime} Y)_{\ell}^{k} = \pi(X K_{\lambda} K_{\lambda^{\prime}})_{j}^{i} c_{k}^{j} \pi(Y)_{\ell}^{k} = \pi(X)_{j}^{i} c_{k}^{j} \pi(K_{\lambda} K_{\lambda^{\prime}}Y)_{\ell}^{k}.
\]
\end{lemma}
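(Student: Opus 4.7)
The plan is to reduce everything to the diagonal action of the Cartan generators on a weight basis and then use the hypothesis to shift scalar factors across the coefficients $c^j_k$. Since $\{v_i\}$ is a weight basis with $v_i$ of weight $\lambda_i$, we have $\pi(K_\mu)^a_b = q^{(\mu,\lambda_a)} \delta^a_b$ for any weight $\mu$. Hence
\[
\pi(X K_\lambda)^i_j = \pi(X)^i_j \, q^{(\lambda, \lambda_j)}, \qquad \pi(K_{\lambda'} Y)^k_\ell = q^{(\lambda', \lambda_k)} \, \pi(Y)^k_\ell.
\]

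With these identities in hand, the left-hand side of the claimed equality becomes
\[
\pi(X)^i_j \, c^j_k \, \pi(Y)^k_\ell \cdot q^{(\lambda, \lambda_j)} q^{(\lambda', \lambda_k)}.
\]
This is where the hypothesis enters: because $c^j_k = 0$ unless $\lambda_j = \lambda_k$, in every nonzero term of the (implicit) sum over $j$ and $k$ we may freely replace $\lambda_k$ by $\lambda_j$ or vice versa in the exponents. Therefore
\[
q^{(\lambda, \lambda_j)} q^{(\lambda', \lambda_k)} \, c^j_k = q^{(\lambda+\lambda', \lambda_j)} \, c^j_k = q^{(\lambda+\lambda', \lambda_k)} \, c^j_k,
\]
and reabsorbing these scalars into $\pi(X K_\lambda K_{\lambda'})^i_j$ on one side or into $\pi(K_\lambda K_{\lambda'} Y)^k_\ell$ on the other yields the two stated equalities.

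There is no real obstacle here beyond bookkeeping: the content is that the diagonal operator $\pi(K_\mu)$ sitting between $c^j_k$ and a matrix coefficient can be transferred to the other side because $c^j_k$ is supported on pairs of equal weight. I would present the argument as a single two-line calculation, commenting explicitly on the step where $\lambda_j = \lambda_k$ is used so that the reader sees which hypothesis is being invoked.
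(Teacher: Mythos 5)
Your proof is correct and follows essentially the same route as the paper's: both exploit that $\pi(K_\mu)$ is diagonal in the weight basis, with entries $q^{(\mu,\lambda_a)}$, and then use the support condition $c^j_k=0$ for $\lambda_j\neq\lambda_k$ to transfer the scalar $q^{(\lambda',\lambda_k)}$ (equivalently $\pi(K_{\lambda'})^k_k$) across $c^j_k$ and reabsorb it on either side. The only cosmetic difference is that the paper keeps the factors written as $\pi(K_\lambda)^j_j$ rather than as explicit powers of $q$.
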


\begin{proof}
Since we have $\pi(K_{\lambda})^i_j = \delta^i_j q^{(\lambda, \lambda_i)}$ we can rewrite
\[
\pi(XK_{\lambda})_{j}^{i}c_{k}^{j}\pi(K_{\lambda^{\prime}}Y)_{\ell}^{k}=\pi(X)_{j}^{i}\pi(K_{\lambda})_{j}^{j}c_{k}^{j}\pi(K_{\lambda^{\prime}})_{k}^{k}\pi(Y)_{\ell}^{k}.
\]
Next we have $\pi(K_{\lambda})_{i}^{i}=\pi(K_{\lambda})_{j}^{j}$ for $\lambda_i = \lambda_j$.
Since by assumption $c_{k}^{j}=0$ if $\lambda_{j}\neq\lambda_{k}$, we have the identity $c_{k}^{j}\pi(K_{\lambda^{\prime}})_{k}^{k}=\pi(K_{\lambda^{\prime}})_{j}^{j}c_{k}^{j}$.
Then we obtain
\[
\pi(XK_{\lambda})_{j}^{i}c_{k}^{j}\pi(K_{\lambda^{\prime}}Y)_{\ell}^{k}=\pi(X)_{j}^{i}\pi(K_{\lambda}K_{\lambda^{\prime}})_{j}^{j}c_{k}^{j}\pi(Y)_{\ell}^{k}=\pi(XK_{\lambda}K_{\lambda^{\prime}})_{j}^{i}c_{k}^{j}\pi(Y)_{\ell}^{k}.
\]
Similarly the second equality is obtained by writing $\pi(K_{\lambda})_{j}^{j}c_{k}^{j}=c_{k}^{j}\pi(K_{\lambda})_{k}^{k}$.
\end{proof}

\subsection{Organization of the computation}

Now our aim is to simplify the expression given in \cref{lem:pairing-xy} in the case $X = F_a$ and $Y = E_a$.
Since this expression involves coproducts, it is convenient to introduce the following notation in order to handle the different terms.

\begin{notation}
For $X, X^\prime, Y, Y^\prime \in U_q(\mathfrak{g})$ we define
\[
\Xi^{\lambda}(X\otimes X^{\prime}\otimes Y\otimes Y^{\prime}) := \sum_{i,j,m,n,o,p}(2c_{j}^{i}-\delta_{j}^{i})\pi(X)_{m}^{j}c_{n}^{m}\pi(X^{\prime}Y)_{o}^{n}c_{p}^{o}\pi(Y^{\prime}K_{\lambda})_{i}^{p}.
\]
With this notation we have $\eta_{X, Y}^\lambda(\mathsf{P}) = \Xi^\lambda (S(X_{(1)}) \otimes  X_{(2)} \otimes S(Y_{(1)}) \otimes  Y_{(2)})$.
\end{notation}

The expression $S(X_{(1)}) \otimes  X_{(2)} \otimes S(Y_{(1)}) \otimes  Y_{(2)}$ contains four terms in the case $X = F_a$ and $Y = E_a$.
In our conventions these are explicitly given by
\[
\begin{split}
S(X_{(1)}) \otimes X_{(2)} \otimes S(Y_{(1)}) \otimes Y_{(2)} & = K_{a}F_{a} \otimes 1 \otimes E_{a} K_{a}^{-1} \otimes K_{a} -K_{a }F_{a} \otimes 1 \otimes 1 \otimes E_{a}\\
 & - K_{a} \otimes F_{a} \otimes E_{a} K_{a}^{-1} \otimes K_{a} + K_{a} \otimes F_{a} \otimes 1 \otimes E_{a}.
\end{split}
\]
In the next subsection we will compute the value of the functional $\Xi^{\lambda}$ when applied to these four terms.
This will allow us to obtain a simple expression for $\eta_a (C(\mathsf{P}))$.

\subsection{Computation of the four terms}

We start by computing the functional $\Xi^\lambda$ applied to the first and fourth term in the expansion of
$S(X_{(1)})\otimes X_{(2)}\otimes S(Y_{(1)})\otimes Y_{(2)}$, in the case $X = F_a$ and $Y = E_a$.
The next lemma shows that these take the same values.

\begin{lemma}
\label{lem:computation-14}
We have the identities
\[
\begin{split}
\Xi^\lambda (K_a \otimes F_a\otimes 1 \otimes E_a) & = \Xi^\lambda (K_a F_a \otimes 1 \otimes E_a K_a^{-1} \otimes K_a) \\
 & = \sum_{i, j, k, \ell} c^i_j \pi(K_a F_a)^j_k c^k_\ell \pi(E_a K_\lambda)^\ell_i.
\end{split}
\]
\end{lemma}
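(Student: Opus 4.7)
The proof will be a direct index manipulation, using the definition of $\Xi^\lambda$ together with two key ingredients: (i) the diagonality of the matrices $\pi(K_\lambda)$, and (ii) the preceding \cref{lem:useful}, which under the assumption $c^m_n=0$ for $\lambda_m\neq\lambda_n$ (in force throughout this section because $\mathsf{P}$ descends to the quantum flag manifold) allows us to commute a diagonal matrix $\pi(K_\lambda)$ past any block of coefficients $c^m_n$. The plan is to show that each of the two expressions collapses, by absorbing Cartan generators into the nearest matrix coefficient, to a sum of the form $\sum c^i_j \pi(K_a F_a)^j_k c^k_\ell \pi(E_a K_\lambda)^\ell_i$ after the initial factor $(2c^i_j - \delta^i_j)$ is simplified by the projection identity.

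For the first expression $\Xi^\lambda(K_a \otimes F_a \otimes 1 \otimes E_a)$, I would expand using the definition, observe that $\pi(K_a)^j_m = \delta^j_m q^{(\alpha_a,\lambda_j)}$, and use \cref{lem:useful} to absorb this diagonal factor into $\pi(F_a)$, producing $\pi(K_a F_a)^n_o$. The resulting sum has the form $\sum_{i,j} (2c^i_j - \delta^i_j) c^j_n \cdot (\text{rest})$, and the projection relation $\sum_k c^i_k c^k_j = c^i_j$ gives $\sum_j (2c^i_j - \delta^i_j) c^j_n = 2c^i_n - c^i_n = c^i_n$. After a rename of dummy indices this yields the claimed expression.

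For the second expression $\Xi^\lambda(K_a F_a \otimes 1 \otimes E_a K_a^{-1} \otimes K_a)$, the strategy is symmetric but acts on the right: expand using the definition, write $\pi(E_a K_a^{-1}) = \pi(E_a)\pi(K_a^{-1})$ and $\pi(K_a K_\lambda) = \pi(K_a)\pi(K_\lambda)$, then use \cref{lem:useful} to move $\pi(K_a^{-1})$ across $c^o_p$ and combine with $\pi(K_a)$ on the right of $\pi(K_\lambda)$; the net effect is that the two Cartan factors cancel and $\pi(K_\lambda)$ can be reabsorbed into $\pi(E_a)$, yielding $\pi(E_a K_\lambda)^n_o$. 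What remains is a sum of the form $\sum_i (2c^i_j - \delta^i_j) c^o_i \cdot(\text{rest})$, and by the same projection identity (now applied as $\sum_i c^o_i c^i_j = c^o_j$) this collapses to $c^o_j$, again giving the claimed form after relabeling.

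The main obstacle is purely bookkeeping: there are six summation indices and one needs to be careful about the direction in which each diagonal Cartan factor is absorbed, since moving $q^{(\alpha_a,\lambda_m)}$ across $c^m_n$ requires the vanishing hypothesis to reidentify $\lambda_m$ with $\lambda_n$. Conceptually nothing is happening beyond (a) $\pi(K_\mu)$ commutes past $c$-blocks of constant weight, (b) $E_a K_a^{-1}$ and $K_a$ on opposite sides of a $c$-block collapse into each other, and (c) $(2P-\mathrm{Id})P = P$. It is worth recording these three moves first as a separate preliminary, which would make the identity of the two sides transparent and also set up the analogous computation for the remaining two terms of the expansion of $S(X_{(1)})\otimes X_{(2)}\otimes S(Y_{(1)})\otimes Y_{(2)}$.
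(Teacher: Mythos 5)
Your proposal is correct and follows essentially the same route as the paper: expand $\Xi^\lambda$ by definition, use \cref{lem:useful} to absorb or cancel the Cartan factors (producing $\pi(K_aF_a)$ in the first term and collapsing $K_a^{-1}$ against $K_a$ to leave $\pi(E_aK_\lambda)$ in the second), and then apply the coefficient identity $\sum_k c^i_k c^k_j=c^i_j$ to reduce $(2c-\delta)c$ to $c$. The only cosmetic difference is that you propose isolating the three elementary moves as a preliminary, whereas the paper performs them inline.
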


\begin{proof}
Let us start by considering the fourth term $K_a \otimes F_a\otimes 1 \otimes E_a$.
We have
\[
\Xi^{\lambda}(K_{a}\otimes F_{a}\otimes1\otimes E_{a})=\sum_{i,j,m,n,o,p}(2c_{j}^{i}-\delta_{j}^{i})\pi(K_{a})_{m}^{j}c_{n}^{m}\pi(F_{a})_{o}^{n}c_{p}^{o}\pi(E_{a}K_{\lambda})_{i}^{p}.
\]
Using \cref{lem:useful} we rewrite this expression as
\[
\Xi^{\lambda}(K_{a}\otimes F_{a}\otimes1\otimes E_{a})=\sum_{i,j,n,o,p}(2c_{j}^{i}-\delta_{j}^{i})c_{n}^{j}\pi(K_{a}F_{a})_{o}^{n}c_{p}^{o}\pi(E_{a}K_{\lambda})_{i}^{p}.
\]
We have $\sum_j (2c^i_j - \delta^i_j) c^j_n = c^i_n$, thanks to the identity $\sum_k c^i_k c^k_j = c^i_j$. Hence
\[
\Xi^{\lambda}(K_{a}\otimes F_{a}\otimes1\otimes E_{a})=\sum_{i,n,o,p}c_{n}^{i}\pi(K_{a}F_{a})_{o}^{n}c_{p}^{o}\pi(E_{a}K_{\lambda})_{i}^{p}.
\]

Now consider the first term $K_{a}F_{a}\otimes1\otimes E_{a}K_{a}^{-1}\otimes K_{a}$.
We have
\[
\Xi^{\lambda}(K_{a}F_{a}\otimes1\otimes E_{a}K_{a}^{-1}\otimes K_{a})=\sum_{i,j,m,n,o,p}(2c_{j}^{i}-\delta_{j}^{i})\pi(K_{a}F_{a})_{m}^{j}c_{n}^{m}\pi(E_{a}K_{a}^{-1})_{o}^{n}c_{p}^{o}\pi(K_{a}K_{\lambda})_{i}^{p}.
\]
Using \cref{lem:useful} we rewrite this expression as
\[
\Xi^{\lambda}(K_{a}F_{a}\otimes1\otimes E_{a}K_{a}^{-1}\otimes K_{a})=\sum_{i,j,m,n,o}(2c_{j}^{i}-\delta_{j}^{i})\pi(K_{a}F_{a})_{m}^{j}c_{n}^{m}\pi(E_{a}K_{\lambda})_{o}^{n}c_{i}^{o}.
\]
Finally using the identity $\sum_{k}c_{k}^{i}c_{j}^{k}=c_{j}^{i}$
this can be rewritten as
\[
\Xi^{\lambda}(K_{a}F_{a}\otimes1\otimes E_{a}K_{a}^{-1}\otimes K_{a})=\sum_{j,m,n,o}c_{j}^{o}\pi(K_{a}F_{a})_{m}^{j}c_{n}^{m}\pi(E_{a}K_{\lambda})_{o}^{n}.
\]
Comparing the two expressions we see that they are identical.
\end{proof}

Next we apply the functional $\Xi^\lambda$ to the the second and third term. The next lemma shows that these take a different form with respect to the previous two terms.

\begin{lemma}
\label{lem:computation-23}
We have the identities
\[
\begin{split}
\Xi^\lambda (K_a F_a \otimes 1 \otimes 1 \otimes E_a) & = 2 \Xi^\lambda (K_a \otimes F_a \otimes 1 \otimes E_a) - \sum_{i,j}c_{j}^{i}\pi(E_{a}K_{\lambda}K_{a}F_{a})_{i}^{j}, \\
\Xi^\lambda (K_a \otimes F_a \otimes  E_a K_a^{-1} \otimes K_a) & = \sum_{i,j}c_{j}^{i}\pi(K_{a}F_{a}E_{a}K_{\lambda})_{i}^{j}.
\end{split}
\]
\end{lemma}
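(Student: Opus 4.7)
The plan is to expand each $\Xi^\lambda$-expression from its definition, and then reduce using three tools: the idempotency $\sum_k c^i_k c^k_j = c^i_j$, the $K$-shuffling identity of \cref{lem:useful}, and the standing hypothesis that $c^m_n = 0$ whenever $\lambda_m \neq \lambda_n$.

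For the first identity, note that the middle factor $\pi(1)^n_o = \delta^n_o$ collapses, and the projection property then absorbs two of the coefficient matrices. What remains is
\[
\sum_{i,j,m,p}(2c^i_j - \delta^i_j)\,\pi(K_a F_a)^j_m\, c^m_p\, \pi(E_a K_\lambda)^p_i.
\]
Splitting $2c^i_j - \delta^i_j$ into its two pieces, the $2c^i_j$-piece is exactly $2\,\Xi^\lambda(K_a\otimes F_a\otimes 1\otimes E_a)$ by the explicit formula established in \cref{lem:computation-14}. The $-\delta^i_j$-piece equals $-\mathrm{Tr}\bigl(\pi(K_a F_a)\,c\,\pi(E_a K_\lambda)\bigr)$, and by cyclicity of the matrix trace this rearranges into $-\sum_{i,j}c^i_j \pi(E_a K_\lambda K_a F_a)^j_i$, matching the claim.

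For the second identity I would use \cref{lem:useful} twice to transport the $K_a^{\pm 1}$ factors across adjacent coefficient matrices. First, the $K_a^{-1}$ sitting at the right of $\pi(F_a E_a K_a^{-1})$ can be pushed past $c^o_p$, where it cancels the leading $K_a$ of $\pi(K_a K_\lambda)$. Second, the leading $K_a$ of $\pi(K_a)$ can be pushed past $c^m_n$, where it combines with $\pi(F_a E_a)$ to produce $\pi(K_a F_a E_a)$. After these two shuffles the expression becomes
\[
\sum_{i,j,n,o,p}(2c^i_j - \delta^i_j)\, c^j_n\, \pi(K_a F_a E_a)^n_o\, c^o_p\, \pi(K_\lambda)^p_i,
\]
and the identity $(2c^i_j - \delta^i_j)c^j_n = c^i_n$ trims the leftmost factor. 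Finally $\pi(K_\lambda)^p_i = \delta^p_i q^{(\lambda,\lambda_i)}$, and the hypothesis on the coefficients lets me shift this weight via $c^o_i q^{(\lambda,\lambda_i)} = q^{(\lambda,\lambda_o)} c^o_i$, which combines with the preceding $\pi(K_a F_a E_a)$ to give $\pi(K_a F_a E_a K_\lambda)$; closing the index loop with $\sum_i c^o_i c^i_n = c^o_n$ yields exactly $\sum_{i,j} c^i_j \pi(K_a F_a E_a K_\lambda)^j_i$.

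The main obstacle is the bookkeeping in the second identity: one must select the appropriate form of \cref{lem:useful} at each step to route the $K$-factors in the right direction, and must invoke the hypothesis on the weights of nonzero $c^m_n$ at the very end in order to identify $\pi(K_a F_a E_a K_\lambda)$. Once the $K$-shuffling is carried out cleanly, the projection identity $c^2 = c$ and trace cyclicity do all remaining work.
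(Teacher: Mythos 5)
Your proposal is correct and follows essentially the same route as the paper: expand $\Xi^\lambda$ from its definition, collapse coefficient matrices via $\sum_k c^i_k c^k_j = c^i_j$, identify the $2c$-part of the first term with the formula from \cref{lem:computation-14}, and use \cref{lem:useful} to shuffle the $K_a^{\pm1}$ factors in the second term. The only (cosmetic) difference is that the paper merges $K_a^{-1}K_aK_\lambda$ into the middle factor in one application of \cref{lem:useful}, whereas you leave a trailing $\pi(K_\lambda)$ and shift its weight across $c$ at the end; both reductions rest on the same weight condition on the coefficients.
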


\begin{proof}
Consider the second term $K_{a} F_{a}\otimes 1 \otimes 1 \otimes E_{a}$.
We have
\[
\Xi^\lambda (K_a F_a \otimes 1 \otimes 1 \otimes E_a) = \sum_{i,j,m,n,o,p}(2c_{j}^{i}-\delta_{j}^{i})\pi(K_{a}F_{a})_{m}^{j}c_{n}^{m}\pi(1)_{o}^{n}c_{p}^{o}\pi(E_{a}K_{\lambda})_{i}^{p}.
\]
Using the relation $\sum_{k}c_{k}^{i}c_{j}^{k}=c_{j}^{i}$ this becomes
\[
\Xi^{\lambda} (K_a F_a \otimes 1 \otimes 1 \otimes E_a) = \sum_{i,j,m,p}(2c_{j}^{i}-\delta_{j}^{i})\pi(K_{a}F_{a})_{m}^{j}c_{p}^{m}\pi(E_{a}K_{\lambda})_{i}^{p}.
\]
Moreover we have the following identity
\[
\sum_{i,j,m,p} \delta_{j}^{i}\pi(K_a F_a)_{m}^{j} c_{p}^{m} \pi(E_a K_\lambda)_{i}^{p} = \sum_{m,p} c_{p}^{m} \pi(E_a K_\lambda K_a F_a)_{m}^{p}.
\]
Then comparing with \cref{lem:computation-14} we see that
\[
\Xi^\lambda (K_a F_a \otimes 1 \otimes 1 \otimes E_a) = 2 \Xi^\lambda (K_a \otimes F_a \otimes 1 \otimes E_a) - \sum_{m, p} c_{p}^{m} \pi(E_a K_\lambda K_a F_a)_{m}^{p}.
\]

Next consider the third term $K_{a}\otimes F_{a}\otimes E_{a}K_{a}^{-1}\otimes K_{a}$.
We have
\[
\Xi^{\lambda}(K_{a}\otimes F_{a}\otimes E_{a}K_{a}^{-1}\otimes K_{a})=\sum_{i,j,m,n,o,p}(2c_{j}^{i}-\delta_{j}^{i})\pi(K_{a})_{m}^{j}c_{n}^{m}\pi(F_{a}E_{a}K_{a}^{-1})_{o}^{n}c_{p}^{o}\pi(K_{a}K_{\lambda})_{i}^{p}.
\]
Using \cref{lem:useful} this can be rewritten as
\[
\Xi^{\lambda} (K_{a} \otimes F_{a}\otimes E_{a} K_{a}^{-1} \otimes K_{a}) = \sum_{i,j,n,o} (2c_{j}^{i} - \delta_{j}^{i}) c_{n}^{j} \pi(K_{a} F_{a} E_{a} K_{\lambda})_{o}^{n} c_{i}^{o}.
\]
Finally using the identity $\sum_{k}c_{k}^{i}c_{j}^{k}=c_{j}^{i}$
twice we obtain
\[
\Xi^\lambda (K_a \otimes F_a \otimes  E_a K_a^{-1} \otimes K_a) = \sum_{n, o} c^o_n \pi(K_a F_a E_a K_\lambda)^n_o.
\qedhere
\]
\end{proof}

\subsection{Computation of $\eta_a(C(\mathsf{P}))$}

Now we are in the position to conclude the computation of $\eta_a(C(\mathsf{P}))$.
First we put together all the previous results.

\begin{lemma}
\label{prop:general-comp}
We have the identity
\[
\eta_a^{\lambda}(\mathsf{P}) = \sum_{i,j}c_{j}^{i}\pi(E_{a}K_{\lambda}K_{a}F_{a})_{i}^{j}-\sum_{i,j}c_{j}^{i}\pi(K_{a}F_{a}E_{a}K_{\lambda})_{i}^{j}.
\]
\end{lemma}

\begin{proof}
Applying $\Xi^\lambda$ to $S(X_{(1)})\otimes X_{(2)}\otimes S(Y_{(1)})\otimes Y_{(2)}$ with
$X = F_a$ and $Y = E_a$ we get
\[
\begin{split}
\eta_a^\lambda (\mathsf{P}) & =\Xi^{\lambda}(K_a F_a \otimes 1 \otimes E_a K_a^{-1} \otimes K_a) -\Xi^{\lambda}(K_{a}F_{a}\otimes 1 \otimes 1 \otimes E_{a}) \\
 & -\Xi^{\lambda}(K_{a}\otimes F_{a}\otimes E_{a}K_{a}^{-1} \otimes K_{a}) + \Xi^{\lambda}(K_{a} \otimes F_{a} \otimes 1 \otimes E_{a}).
\end{split}
\]
Combining \cref{lem:computation-14} and \cref{lem:computation-23} we can write
\[
\begin{split}
\Xi^{\lambda} (K_{a} F_{a} \otimes 1 \otimes 1 \otimes E_{a}) & =\Xi^{\lambda}(K_{a} F_{a}\otimes 1 \otimes E_{a} K_{a}^{-1} \otimes K_{a}) + \Xi^{\lambda}(K_{a} \otimes F_{a}\otimes 1 \otimes E_{a}) \\
 & -\sum_{i,j} c_{j}^{i} \pi(E_{a}K_{\lambda} K_{a} F_{a})_{i}^{j}.
\end{split}
\]
Plugging this into $\eta_a^\lambda (\mathsf{P})$ we see that two terms cancel out. Finally using the explicit expression for $\Xi^\lambda (K_a \otimes F_a \otimes E_a K_a^{-1} \otimes K_a)$ we conclude that
\[
\eta_{a}^{\lambda}(\mathsf{P})=\sum_{i,j}c_{j}^{i}\pi(E_{a}K_{\lambda}K_{a}F_{a})_{i}^{j}-\sum_{i,j}c_{j}^{i}\pi(K_{a}F_{a}E_{a}K_{\lambda})_{i}^{j}.
\qedhere
\]
\end{proof}

Now we specialize to the case $\lambda = -2 \rho$, corresponding to the pairing $\eta_a (C(\mathsf{P}))$.
In this situation we can make a further simplification, which gives a very simple result.

\begin{proposition}
\label{thm:pairingP}
Let $\mathsf{P} = \sum_{m, n} c^m_n \mathsf{M}^n_m$ be a projection with $c^m_n = 0$ if $\lambda_m \neq \lambda_n$. Then
\[
\eta_a (C(\mathsf{P})) = \sum_i c^i_i q^{(\alpha_a - 2\rho, \lambda_i)} [d_a^{-1} (\alpha_a, \lambda_i)]_{q_a}.
\]
\end{proposition}

\begin{proof}
Recall the commutation relations $E_{a}K_{\lambda}=q^{-(\alpha_{a},\lambda)}K_{\lambda}E_{a}$
and $F_{a}K_{\lambda}=q^{(\alpha_{a},\lambda)}K_{\lambda}F_{a}$.
From these we immediately derive $F_{a}E_{a}K_{2\rho}^{-1}=K_{2\rho}^{-1}F_{a}E_{a}$.
A less obvious identity is
\[
E_{a}K_{2\rho}^{-1}K_{a}=K_{2\rho}^{-1}K_{a}E_{a}.
\]
This can be seen as follows. We have $E_{a}K_{2\rho}^{-1}K_{a}=q^{(2\rho-\alpha_{a},\alpha_{a})}K_{2\rho}^{-1}K_{a}E_{a}$ from the commutation relations.
Next we show that $(2\rho,\alpha_{a})=(\alpha_{a},\alpha_{a})$. Recall that $\rho$ can be written as $\rho=\sum_{i}\omega_{i}$,
where $\{\omega_{i}\}_{i}$ are the fundamental weights. Then we have
\[
(2 \rho, \alpha_a) = (\alpha_a, \alpha_a) \sum_i (\omega_i, \alpha_a^\vee) = (\alpha_a, \alpha_a) \sum_i \delta_{i a} = (\alpha_a, \alpha_a),
\]
where we have used that the fundamental weights are dual to the coroots $\alpha_a^\vee = 2 \alpha_a / (\alpha_a, \alpha_a)$.

Using the commutation relations above we can rewrite \cref{prop:general-comp} in the form
\[
\eta_{a}(C(\mathsf{P}))=\sum_{i,j}c_{j}^{i}\pi(K_{2\rho}^{-1}K_{a}[E_{a},F_{a}])_{i}^{j}.
\]
Now we can use the commutation relations $[E_a, F_a] = \frac{K_a - K_a^{-1}}{q_a - q_a^{-1}}$. Then
\[
\eta_{a}(C(\mathsf{P})) = \sum_{i,j}c_{j}^{i} \pi \left( K_{2\rho}^{-1}K_{a}\frac{K_{a}-K_{a}^{-1}}{q_{a}-q_{a}^{-1}} \right)_{i}^{j}.
\]
Next we have $\pi(K_\lambda)^i_j = \delta^i_j q^{(\lambda, \lambda_i)}$, where $\{\lambda_i\}_i$ are the weights corresponding to our choice of basis for $V(\Lambda)$.
Then the above expression can be rewritten as
\[
\eta_a (C(\mathsf{P})) = \sum_{i}c_{i}^{i} q^{(\alpha_{a}-2\rho,\lambda_{i})} \frac{q^{(\alpha_{a},\lambda_{i})} - q^{-(\alpha_{a},\lambda_{i})}}{q_{a}-q_{a}^{-1}}.
\]
Finally since $q_{a}=q^{d_{a}}$ we have the identity $[d_{a}^{-1}(\alpha_a, \lambda_i)]_{q_a} = 
\frac{q^{(\alpha_a, \lambda_i)} - q^{-(\alpha_a, \lambda_i)}}{q_a - q_a^{-1}}$.
\end{proof}

\subsection{Computation of $\eta_a(C(\mathsf{Q}))$}

The computation of the pairing $\eta_a(C(\mathsf{Q}))$ can be essentially reduced to that of $\eta_a(C(\mathsf{P}))$.
To see this we need the following simple lemma.

\begin{lemma}
\label{lem:EF-action-N}
Suppose $\lambda_{m}=\lambda_{n}$. Then we have
\[
\begin{split}
\varepsilon(E_a \triangleright (\mathsf{N}^n_m)^i_j)
& = - q^{-(\alpha_a, \lambda_j)} \varepsilon(E_a \triangleright (\mathsf{M}^n_m)^i_j), \\
\varepsilon(F_a \triangleright (\mathsf{N}^n_m)^i_j)
& = - q^{-(\alpha_a, \lambda_i)}\varepsilon(F_a \triangleright (\mathsf{M}^n_m)^i_j).
\end{split}
\]
\end{lemma}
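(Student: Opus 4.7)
The plan is to reduce everything to the dual pairing $\varepsilon(X \triangleright \phi) = \phi(X)$, which is immediate from $(X \triangleright \phi)(Y) = \phi(YX)$ together with $\varepsilon(\phi) = \phi(1)$. Combined with the duality between product and coproduct, namely $(ab)(X) = \sum_{(X)} a(X_{(1)}) b(X_{(2)})$, and with the evaluations $u^i_j(X) = \pi(X)^i_j$ and $u^{i*}_j(X) = \pi(S(X))^j_i$ (the latter coming from $S(u^j_i) = u^{i*}_j$), applying $\varepsilon$ after acting by $X$ on the products $u^{m*}_i u^n_j$ and $u^i_m u^{j*}_n$ yields the compact formulas
\[
\varepsilon(X \triangleright (\mathsf{M}^n_m)^i_j) = \sum_{(X)} \pi(S(X_{(1)}))^i_m \, \pi(X_{(2)})^n_j, \quad \varepsilon(X \triangleright (\mathsf{N}^n_m)^i_j) = \sum_{(X)} \pi(X_{(1)})^i_m \, \pi(S(X_{(2)}))^n_j.
\]

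I would then specialize to $X = E_a$ using $\Delta(E_a) = E_a \otimes K_a + 1 \otimes E_a$ and $S(E_a) = -E_a K_a^{-1}$, so that each sum collapses to two terms. One contribution carries $\delta^n_j$ (from the $\pi(K_a^{\pm 1})$ factor paired with $\pi(E_a)^i_m$) and the other carries $\delta^i_m$ (from the $\pi(1)$ paired with $\pi(E_a)^n_j$). Since $\pi(K_a^{\pm 1})^i_j = \delta^i_j q^{\pm (\alpha_a, \lambda_i)}$, a direct comparison shows that the two $\delta^i_m$ terms are already related by the scalar $-q^{-(\alpha_a, \lambda_j)}$, whereas the two $\delta^n_j$ terms match under that same scalar precisely when $\lambda_m = \lambda_n$ — this is the sole use of the hypothesis. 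The identity for $F_a$ follows from the parallel computation with $\Delta(F_a) = F_a \otimes 1 + K_a^{-1} \otimes F_a$ and $S(F_a) = -K_a F_a$; here the universal prefactor becomes $-q^{-(\alpha_a, \lambda_i)}$, and again $\lambda_m = \lambda_n$ is what cancels a residual $q^{(\alpha_a, \lambda_n - \lambda_i)}$ arising in the $\delta^i_m$ piece (upon noting that $\delta^i_m$ forces $\lambda_i = \lambda_m$).

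The only real obstacle is the $q$-exponent bookkeeping: several factors $q^{\pm(\alpha_a, \lambda_\bullet)}$ appear simultaneously, and one must use the Kronecker deltas to rewrite weights $\lambda_j \leftrightarrow \lambda_n$ and $\lambda_i \leftrightarrow \lambda_m$ on the support of each term before invoking $\lambda_m = \lambda_n$. Conceptually the hypothesis is what aligns the weight sectors of the isotypic decompositions of $\mathsf{M}^n_m$ and $\mathsf{N}^n_m$; without it the two expressions are genuinely unrelated by any overall scalar, which is why the assumption cannot be dropped.
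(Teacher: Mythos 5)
Your proposal is correct and follows essentially the same route as the paper: both derive the compact formulas $\varepsilon(X \triangleright (\mathsf{M}^n_m)^i_j) = \pi(S(X_{(1)}))^i_m \pi(X_{(2)})^n_j$ and $\varepsilon(X \triangleright (\mathsf{N}^n_m)^i_j) = \pi(X_{(1)})^i_m \pi(S(X_{(2)}))^n_j$, expand the coproducts of $E_a$ and $F_a$ into two terms each, and compare term by term, invoking $\lambda_m = \lambda_n$ exactly where the residual factor $q^{(\alpha_a,\lambda_n-\lambda_m)}$ must collapse to $1$. Your identification of which term needs the hypothesis in each case, and the observation that the Kronecker deltas let one trade $\lambda_n \leftrightarrow \lambda_j$ and $\lambda_i \leftrightarrow \lambda_m$ on the relevant supports, matches the paper's use of its Lemma \ref{lem:useful}.
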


\begin{proof}
We have seen in the proof of \cref{lem:pairing-xy} that $\varepsilon(X \triangleright (\mathsf{M}^n_m)^i_j) = \pi(S(X_{(1)}))^i_m \pi(X_{(2)})^n_j$.
Similarly we obtain the expression $ \varepsilon(X \triangleright (\mathsf{N}^n_m)^i_j) = \pi(X_{(1)})^i_m \pi(S(X_{(2)}))^n_j$.

Now consider the case $X = E_a$. Then we compute
\[
\begin{split}
\varepsilon(E_{a}\triangleright(\mathsf{M}_{m}^{n})_{j}^{i}) & =-\pi(E_{a}K_{a}^{-1})_{m}^{i}\pi(K_{a})_{j}^{n}+\pi(1)_{m}^{i}\pi(E_{a})_{j}^{n}\\
 & =-\pi(E_{a})_{m}^{i}\pi(1)_{j}^{n}+\pi(1)_{m}^{i}\pi(E_{a})_{j}^{n},
\end{split}
\]
where in the second line we have used \cref{lem:useful}, since $\lambda_m = \lambda_n$. On the
other hand we have
\[
\varepsilon(E_{a}\triangleright(\mathsf{N}_{m}^{n})_{j}^{i})=\pi(E_{a})_{m}^{i}\pi(K_{a}^{-1})_{j}^{n}-\pi(1)_{m}^{i}\pi(E_{a}K_{a}^{-1})_{j}^{n}.
\]
Comparing the two expressions we get $\varepsilon(E_a \triangleright (\mathsf{N}^n_m)^i_j) = - q^{-(\alpha_a, \lambda_j)} \varepsilon(E_a \triangleright (\mathsf{M}^n_m)^i_j)$.

Similarly consider the case $X = F_a$. We have
\[
\varepsilon(F_{a}\triangleright(\mathsf{M}_{m}^{n})_{j}^{i})=-\pi(K_{a}F_{a})_{m}^{i}\pi(1)_{j}^{n}+\pi(K_{a})_{m}^{i}\pi(F_{a})_{j}^{n}.
\]
On the other hand we compute
\[
\begin{split}\varepsilon(F_{a}\triangleright(\mathsf{N}_{m}^{n})_{j}^{i}) & =\pi(F_{a})_{m}^{i}\pi(1)_{j}^{n}-\pi(K_{a}^{-1})_{m}^{i}\pi(K_{a}F_{a})_{j}^{n}\\
 & =\pi(F_{a})_{m}^{i}\pi(1)_{j}^{n}-\pi(1)_{m}^{i}\pi(F_{a})_{j}^{n},
\end{split}
\]
where we have used \cref{lem:useful} again.
Comparing the two expressions we get the identity $\varepsilon(F_a \triangleright (\mathsf{N}^n_m)^i_j) = - q^{-(\alpha_a, \lambda_i)}\varepsilon(F_a \triangleright (\mathsf{M}^n_m)^i_j)$, which concludes the proof.
\end{proof}

Now we are in the position to compute the pairing $\eta_a(C(\mathsf{Q}))$.

\begin{proposition}
\label{thm:pairingQ}
Let $\mathsf{Q} = \sum_{m, n} c^m_n \mathsf{N}^n_m$ be a projection with $c^m_n = 0$ if $\lambda_m \neq \lambda_n$. Then
\[
\eta_{a}(C(\mathsf{Q}))=\sum_{i}c_{i}^{i}q^{-(\alpha_{a}-2\rho,\lambda_{i})}[d_{a}^{-1}(\alpha_{a},\lambda_{i})]_{q_{a}}.
\]
\end{proposition}

\begin{proof}
Proceeding as in \cref{lem:simplifications} we obtain the formula
\[
\eta_{a}(C(\mathsf{Q}))=\sum_{i,j,k}q^{(2\rho,\lambda_{i})}(2c_{j}^{i}-\delta_{j}^{i})\varepsilon(F_{a}\triangleright\mathsf{Q}_{k}^{j})\varepsilon(E_{a}\triangleright\mathsf{Q}_{i}^{k}).
\]
We start by focusing on the expression
\[
\varepsilon(F_{a}\triangleright\mathsf{Q}_{k}^{j})\varepsilon(E_{a}\triangleright\mathsf{Q}_{i}^{k})=\sum_{m,n,o,p}c_{n}^{m}c_{p}^{o}\varepsilon(F_{a}\triangleright(\mathsf{N}_{m}^{n})_{k}^{j})\varepsilon(E_{a}\triangleright(\mathsf{N}_{o}^{p})_{i}^{k}).
\]
We have $c^m_n = 0$ for $\lambda_{m}\neq\lambda_{n}$ by assumption, hence we can consider $\lambda_m =\lambda_n$ and $\lambda_o = \lambda_p$ in the above expression without loss of generality.
Then we can use \cref{lem:EF-action-N} to get
\[
\begin{split}
\varepsilon(F_{a}\triangleright\mathsf{Q}_{k}^{j})\varepsilon(E_{a}\triangleright\mathsf{Q}_{i}^{k}) & =\sum_{m,n,o,p}c_{n}^{m}c_{p}^{o} q^{-(\alpha_{a}, \lambda_i + \lambda_j)} \varepsilon(F_{a}\triangleright(\mathsf{M}_{m}^{n})_{k}^{j})\varepsilon(E_{a}\triangleright(\mathsf{M}_{o}^{p})_{i}^{k}) \\
 & =q^{-(\alpha_{a},\lambda_{i}+\lambda_{j})}\varepsilon(F_{a}\triangleright\mathsf{P}_{k}^{j})\varepsilon(E_{a}\triangleright\mathsf{P}_{i}^{k}).
\end{split}
\]
We can also assume $\lambda_{i}=\lambda_{j}$, since we multiply this expression by $2c_{j}^{i}-\delta_{j}^{i}$. Then 
\[
\eta_{a}(C(\mathsf{Q}))=\sum_{i,j,k}q^{(2\rho-2\alpha_{a},\lambda_{i})}(2c_{j}^{i}-\delta_{j}^{i})\varepsilon(F_{a}\triangleright\mathsf{P}_{k}^{j})\varepsilon(E_{a}\triangleright\mathsf{P}_{i}^{k}).
\]
Therefore we have obtained the equality $\eta_a(C(\mathsf{Q})) = \eta_a^\lambda(\mathsf{P})$
with $\lambda = 2 \rho - 2 \alpha_a$.
Now we can use \cref{prop:general-comp} with $K_\lambda = K_{2 \rho} K_a^{-2}$. We find the expression
\[
\eta_{a}(C(\mathsf{Q}))=\sum_{i,j}c_{j}^{i}\pi(E_{a}K_{2\rho}K_{a}^{-1}F_{a})_{i}^{j}-\sum_{i,j}c_{j}^{i}\pi(K_{a}F_{a}E_{a}K_{2\rho}K_{a}^{-2})_{i}^{j}.
\]
To proceed we use the commutation relations. In general we have $K_{\lambda}F_{a}E_{a}=F_{a}E_{a}K_{\lambda}$.
Moreover we have seen in a previous computation that $E_{a}K_{2\rho}K_{a}^{-1}=K_{2\rho}K_{a}^{-1}E_{a}$.
Then
\[
\eta_{a}(C(\mathsf{Q}))=\sum_{i,j}c_{j}^{i}\pi(K_{2\rho}K_{a}^{-1}[E_{a},F_{a}])_{i}^{j}.
\]
Finally we proceed as for $\eta_{a}(C(\mathsf{P}))$ to obtain the expression in the theorem.
\end{proof}

\section{Non-triviality and linear independence}
\label{sec:classes}

In this section we will give some more precise statements regarding non-triviality of the classes obtained in the previous sections.
We will also show that the twisted Hochschild homology groups $HH_2^\theta(\flagL)$ and $HH_2^\theta(\flagR)$ are of dimension at least $\mathrm{rank}(\lieg)$.

\subsection{Non-trivial classes}

We begin by summarizing the results of the previous sections in the theorem below, which gives some sufficient conditions for the non-triviality of the classes $[C(\mathsf{P})]$ and $[C(\mathsf{Q})]$ defined in \cref{prop:twisted-cycles}.
First we introduce some notation.

\begin{notation}
Given an element $\mathsf{P} = \sum_{m, n} c^m_n \mathsf{M}^n_m$ we define
\[
\chi_a(\mathsf{P}) := \sum_i c^i_i q^{(\alpha_a - 2\rho, \lambda_i)} [d_a^{-1} (\alpha_a, \lambda_i)]_{q_a}.
\]
Similarly, given an element $\mathsf{Q} = \sum_{m, n} c^m_n \mathsf{N}^n_m$ we define
\[
\tilde{\chi}_a(\mathsf{Q}) := \sum_i c^i_i q^{-(\alpha_a - 2\rho, \lambda_i)} [d_a^{-1} (\alpha_a, \lambda_i)]_{q_a}.
\]
\end{notation}

\begin{theorem}
\label{thm:conditions}
Let $\mathsf{P}, \mathsf{Q}$ be projections satisfying the condition $c^m_n = 0$ if $\lambda_m \neq \lambda_n$.

1) Suppose $\chi_a(\mathsf{P}) \neq 0$ for some $a$. Then $[C(\mathsf{P})] \in HH^\theta_2 (\flagR)$ is non-trivial.

2) Suppose $\tilde{\chi}_a(\mathsf{Q}) \neq 0$ for some $a$. Then $[C(\mathsf{Q})] \in HH^\theta_2 (\flagL)$ is non-trivial.
\end{theorem}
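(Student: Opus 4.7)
The plan is to argue by pairing Hochschild homology against cohomology. Given a cycle $c\in HH_2^\sigma(A)$ and a cocycle $\varphi\in HH^2_\sigma(A)$ (for the \emph{same} twist $\sigma$), the evaluation $\varphi(c)\in\mathbb{C}$ descends to a well-defined bilinear pairing $HH^2_\sigma(A)\times HH_2^\sigma(A)\to\mathbb{C}$. Consequently, if we can exhibit a cocycle $\varphi$ such that $\varphi(c)\neq 0$, then $[c]$ must be non-trivial in $HH_2^\sigma(A)$.

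The cocycles at our disposal are the $\eta_{X,Y}$, which were shown to define classes in $HH^2_{\sigma_{\lambda,\lambda^\prime}}$ on the flag manifolds $\flagL$ and $\flagR$, for any pair of weights $\lambda,\lambda^\prime$. The key observation is that the twist we care about, namely the modular automorphism $\theta=\sigma_{2\rho,2\rho}$, is realised by the specific choice $\lambda=\lambda^\prime=2\rho$. So for this choice, the functional $\eta_a=\eta_{F_a,E_a}$ is precisely a $\theta$-twisted $2$-cocycle on the appropriate flag manifold, exactly in the twist needed to pair with the homology classes produced in \cref{cor:class-flag}.

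With the twists matched, the computations already performed in \cref{thm:pairingP} and \cref{thm:pairingQ} give the pairing values directly: $\eta_a(C(\mathsf{P}))=\chi_a(\mathsf{P})$ and $\eta_a(C(\mathsf{Q}))=\tilde{\chi}_a(\mathsf{Q})$ for this particular choice of $\lambda$. For part 1), assume $\chi_a(\mathsf{P})\neq 0$ for some index $a$; the pairing $\eta_a(C(\mathsf{P}))$ is then non-zero, and since this evaluation descends to a pairing on the homology/cohomology level it follows that $[C(\mathsf{P})]\in HH_2^\theta(\flagR)$ cannot be the zero class. Part 2) is identical, using $\tilde{\chi}_a(\mathsf{Q})$ and the cocycle $\eta_a$ now viewed on $\flagL$.

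I do not anticipate any serious obstacle: essentially all the work has been done earlier, and the proof is a short assembly. The only point requiring attention is to verify at the outset that the twists indeed coincide, i.e.\ that choosing $\lambda=\lambda^\prime=2\rho$ in \cref{def:sigma-lambda} reproduces the modular automorphism $\theta$ used to define the twisted homology. Once this identification is made, the pairing argument and the explicit computations from the previous section yield the conclusion immediately.
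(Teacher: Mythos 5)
Your proposal is correct and follows essentially the same route as the paper: the paper's proof simply invokes \cref{thm:pairingP} and \cref{thm:pairingQ} to identify $\chi_a(\mathsf{P})=\eta_a(C(\mathsf{P}))$ and $\tilde{\chi}_a(\mathsf{Q})=\eta_a(C(\mathsf{Q}))$ and concludes via the homology--cohomology pairing. Your added check that $\theta=\sigma_{2\rho,2\rho}$, so that the cocycle $\eta_a$ lives in the matching twist, is a point the paper leaves implicit but is correctly handled in your argument.
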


\begin{proof}
Under the stated assumptions for $\mathsf{P}$ and $\mathsf{Q}$ we have $\chi_a(\mathsf{P}) = \eta_a(C(\mathsf{P}))$ by \cref{thm:pairingP} and $\tilde{\chi}_a(\mathsf{Q}) = \eta_a(C(\mathsf{Q}))$ by \cref{thm:pairingQ}.
The conclusion follows immediately.
\end{proof}

It is worth pointing out that these conditions are quite explicit and therefore easy to check, since they only involve representation-theoretic data.
We see from the conditions that the classes will be generically non-trivial if we consider elements of non-zero weight.

As an important example, we can take the basic projections $\mathsf{P} = \mathsf{M}^m_m$ and $\mathsf{Q} = \mathsf{N}^m_m$ for some $m$.
This will show that the twisted homology groups are non-zero.

\begin{corollary}
Let $\lambda_m$ be a non-zero weight. Then the classes $[C(\mathsf{M}^m_m)] \in HH_2^\theta(\flagR)$ and $[C(\mathsf{N}^m_m)] \in HH_2^\theta(\flagL)$ are non-trivial.
\end{corollary}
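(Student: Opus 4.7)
The plan is to apply \cref{thm:conditions} directly, which requires us to find, for each non-zero weight $\lambda_m$, some index $a$ such that $\chi_a(\mathsf{M}^m_m) \neq 0$ and $\tilde{\chi}_a(\mathsf{N}^m_m) \neq 0$. The basic projections $\mathsf{M}^m_m$ and $\mathsf{N}^m_m$ correspond to coefficients $c^i_j = \delta^i_m \delta^j_m$, which trivially satisfy the selection rule $c^i_j = 0$ for $\lambda_i \neq \lambda_j$, and are easily checked to be projections using the multiplicativity relation $\sum_\ell c^i_\ell c^\ell_j = c^i_j$. Hence \cref{thm:conditions} is applicable.

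First I would substitute these coefficients into the defining formulas for $\chi_a$ and $\tilde{\chi}_a$. Only the term $i = m$ survives, giving
\[
\chi_a(\mathsf{M}^m_m) = q^{(\alpha_a - 2\rho, \lambda_m)} [d_a^{-1}(\alpha_a, \lambda_m)]_{q_a}, \quad
\tilde{\chi}_a(\mathsf{N}^m_m) = q^{-(\alpha_a - 2\rho, \lambda_m)} [d_a^{-1}(\alpha_a, \lambda_m)]_{q_a}.
\]
Since $q$ is assumed to be a generic real parameter, the $q$-prefactors are strictly positive, and the quantum integer $[d_a^{-1}(\alpha_a, \lambda_m)]_{q_a}$ vanishes precisely when $(\alpha_a, \lambda_m) = 0$. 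So both expressions reduce to the single arithmetic question: does there exist some simple root $\alpha_a$ with $(\alpha_a, \lambda_m) \neq 0$?

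The answer is affirmative exactly because the simple roots $\alpha_1, \dots, \alpha_r$ form a basis of $\mathfrak{h}^*$ and the bilinear form $(\cdot, \cdot)$ induced by the Killing form is non-degenerate on $\mathfrak{h}^*$. If we had $(\alpha_a, \lambda_m) = 0$ for every $a$, then $\lambda_m$ would be orthogonal to a spanning set of $\mathfrak{h}^*$, which by non-degeneracy forces $\lambda_m = 0$, contradicting our hypothesis. Thus there is at least one $a$ with $(\alpha_a, \lambda_m) \neq 0$, hence $\chi_a(\mathsf{M}^m_m) \neq 0$ and $\tilde{\chi}_a(\mathsf{N}^m_m) \neq 0$.

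Given how the pieces fit together, there is really no serious obstacle: the calculation is essentially automatic once the basic projection coefficients are plugged into the formulas. The only subtle point is the genericity assumption on $q$ needed to ensure that $[d_a^{-1}(\alpha_a, \lambda_m)]_{q_a}$ does not accidentally vanish for non-zero argument; this is the standard convention in force throughout the paper. Applying \cref{thm:conditions} then yields non-triviality of both $[C(\mathsf{M}^m_m)]$ in $HH_2^\theta(\flagR)$ and $[C(\mathsf{N}^m_m)]$ in $HH_2^\theta(\flagL)$, concluding the proof.
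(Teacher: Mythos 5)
Your proposal is correct and follows essentially the same route as the paper: specialize $\chi_a$ and $\tilde{\chi}_a$ to the rank-one projections, note that the quantum integer $[d_a^{-1}(\alpha_a,\lambda_m)]_{q_a}$ vanishes only when $(\alpha_a,\lambda_m)=0$, and use non-degeneracy of the form to find a simple root with $(\alpha_a,\lambda_m)\neq 0$. The only difference is that you spell out the verification that $\mathsf{M}^m_m$ and $\mathsf{N}^m_m$ satisfy the hypotheses of the non-triviality criterion, which the paper leaves implicit.
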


\begin{proof}
Let us look at the number $\chi_a(\mathsf{M}^m_m) = q^{(\alpha_a - 2\rho, \lambda_m)} [d_a^{-1} (\alpha_a, \lambda_m)]_{q_a}$. Since $\lambda_m \neq 0$, we can always find a simple root $\alpha_a$ such that $(\alpha_a, \lambda_m) \neq 0$ by non-degeneracy.
Therefore the above number is non-zero and from \cref{thm:conditions} we conclude that $[C(\mathsf{M}^m_m)]$ is non-trivial.

The argument for the class $[C(\mathsf{N}^m_m)]$ is identical and we omit it.
\end{proof}

\begin{remark}
We are not able to conclude whether the case $\lambda_m = 0$ is trivial or not.
\end{remark}

Observe that, since we can define the projections $\mathsf{P} = \mathsf{M}^m_m$ and $\mathsf{Q} = \mathsf{N}^m_m$ for any irreducible representation $V(\Lambda)$, we obtain in this way infinitely many non-trivial classes $[C(\mathsf{M}^m_m)]$ and $[C(\mathsf{N}^m_m)]$.
This naturally leads to the problem of studying their linear independence.

\subsection{Linear independence}

In this subsection we will partially discuss the linear independence of the classes obtained above.
The result will be that the twisted homology groups $HH_2^\theta(\flagL)$ and $HH_2^\theta(\flagR)$ are of dimension at least $\mathrm{rank}(\lieg)$.

First let us see what happens in the case of the quantum $2$-sphere.

\begin{example}
Let $\mathfrak{g} = \mathfrak{sl}(2)$. The corresponding full flag manifold is the quantum $2$-sphere.
Denote by $\alpha$ the unique simple root and by $\omega$ the unique fundamental weight. We have $\omega = \rho = \frac{1}{2} \alpha$. The irreducible representations have highest weight $\Lambda = n \omega$ with $n \in \mathbb{N}$, dimension $n + 1$ and weights given by $- \frac{n}{2} \alpha, \cdots, \frac{n}{2} \alpha$. Write $\lambda_k =  \frac{k}{2} \alpha$ and denote by $\mathsf{P}_k$ the projection corresponding to weight $\lambda_k$.
Then we easily compute
\[
\eta(C(\mathsf{P}_k)) = [(\alpha, \lambda_k)]_q = [k]_q.
\]
Hadfield has shown in \cite{twisted-s2} that the space of twisted $2$-cycles is $1$-dimensional. Let us denote by $\mathsf{P}$ the projection corresponding to the weight $\omega = \frac{1}{2} \alpha$. Then it easily follows from the previous computation that $[C(\mathsf{P}_k)] = [k]_q [C(\mathsf{P})]$.
\end{example}

From the previous example, we can expect that the space of twisted $2$-cycles will have dimension larger than one if $\mathrm{rank}(\lieg) > 1$.
This is indeed the case, as we now show.

\begin{theorem}
\label{thm:dimension}
The twisted Hochschild homology groups $HH_2^\theta(\flagL)$ and $HH_2^\theta(\flagR)$ have dimension at least $\mathrm{rank}(\lieg)$.
\end{theorem}

\begin{proof}
Denote by $\mathsf{M}(\omega_i)_m^n$ the matrix units corresponding to the representation $V(\omega_i)$, where $\{ \omega_i \}_{i = 1}^r$ are the fundamental weights of $\lieg$.
We set $\mathsf{P}_i = \mathsf{M}(\omega_i)_1^1$ for $i = 1, \cdots, r$, where $v_1$ is the normalized highest weight vector of $V(\omega_i)$.
By \cref{prop:proj-flag} these projections descend to the quantum full flag manifold $\flagR$.
The classes $C(\mathsf{P}_1), \cdots, C(\mathsf{P}_r)$ are non-trivial, since
\[
\eta_a(C(\mathsf{P}_i)) = \chi_a (\mathsf{P}_i) = q^{(\alpha_a - 2 \rho, \omega_i)} [d_a^{-1} (\alpha_a, \omega_i)]_{q_a} = \delta_{i a} q^{(\alpha_a - 2 \rho, \omega_a)}.
\]
Here we have used that $(\omega_i, \alpha_j) = \delta_{i j} d_j$.
Hence $[C(\mathsf{P}_i)] \neq 0$, as we observed before.

Now we show that the classes $C(\mathsf{P}_1), \cdots, C(\mathsf{P}_r)$ are linearly independent.
Suppose that $\sum_{i = 1}^r b_i C(\mathsf{P}_i) = 0$ for some $b_i$'s. Then we compute
\[
\eta_a \left( \sum_{i = 1}^r b_i C(\mathsf{P}_i) \right) = \sum_{i = 1}^r b_i \chi_a (\mathsf{P}_i) = b_a q^{(\alpha_a - 2 \rho, \omega_a)}.
\]
This implies that $b_a = 0$. Letting $a$ range between $1$ and $r$ gives the claim.

The claim for $HH_2^\theta(\flagL)$ is proven similarly, using $\mathsf{Q}_i = \mathsf{N}(\omega_i)_1^1$ for $i = 1, \cdots, r$.
\end{proof}

\section{Generalized flag manifolds}
\label{sec:generalized}

In this section we will extend some of the results we have obtained to the case of quantum generalized flag manifolds.
This class of spaces contains all the full flag manifolds.
The main issue to discuss is when the projections $\mathsf{P}$ and $\mathsf{Q}$ descend to the appropriate generalized flag manifolds.
We will give a necessary condition for this to happen, but will not discuss the problem in full generality.
On the other hand we will provide an explicit and interesting example of this setting, namely that of the quantum Grassmannians.

\subsection{Equivariant maps}

We start with some simple results on the action of $U_q(\mathfrak{g})$.
Recall that, given a $U_q(\mathfrak{g})$-module $V$ with action $\triangleright$, we can make $V^*$ into a $U_q(\mathfrak{g})$-module by defining $(X\triangleright f)(v)=f(S(X)\triangleright v)$.
It is convenient to define corresponding right actions.

\begin{notation}
Let $V$ be a $U_q(\mathfrak{g})$-module.
Then we define right actions of $\Uqg$ on $V$ and $V^*$ as follows.
For $v \in V$, $f \in V^*$ and $X \in U_q(\mathfrak{g})$ we set
\[
v \triangleleft X := S(X) \triangleright v, \quad
(f \triangleleft X)(v) := f(X \triangleright v).
\]
\end{notation}

Recall that $\Cqg$ has a canonical $\Uqg$-bimodule structure.
We will look at maps from a $\Uqg$-module $V$ to $\Cqg$ which are equivariant with respect to these actions.

\begin{definition}
We say that a map $\psi : V \to \mathbb{C}_q[G]$ is $\triangleright$-equivariant (respectively $\triangleleft$-equivariant) if $X \triangleright \psi(v) = \psi(X \triangleright v)$ (respectively $\psi(v) \triangleleft X = \psi(v \triangleleft X)$) for all $v \in V$ and $X \in \Uqg$.
\end{definition}

With these definitions, we have the following easy result on matrix coefficients.

\begin{proposition}
\label{prop:equiv-maps}
Let $c^\Lambda_{f, v}$ denote the matrix coefficients of a representation $V(\Lambda)$. Then:

1) the map $V(\Lambda) \to \mathbb{C}_q[G]$ given by $v\mapsto c_{f,v}^{\Lambda}$ is $\triangleright$-equivariant,

2) the map $V(\Lambda)^* \to \mathbb{C}_q[G]$ given by $f\mapsto c_{f,v}^{\Lambda}$ is $\triangleleft$-equivariant,

3) the map $V(\Lambda) \to \mathbb{C}_q[G]$ given by $v\mapsto S(c_{f,v}^{\Lambda})$ is $\triangleleft$-equivariant,

4) the map $V(\Lambda)^* \to \mathbb{C}_q[G]$ given by $f\mapsto S(c_{f,v}^{\Lambda})$ is $\triangleright$-equivariant.
\end{proposition}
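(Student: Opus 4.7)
The four statements are equivariance identities which can all be verified by direct computation, applied to an arbitrary test element $Y \in \Uqg$. I would split the proof into two halves: parts 1) and 2), which require only the definitions of the left/right $\Uqg$-actions on $\Cqg$ together with the definition $c^{\Lambda}_{f,v}(X) = f(X \triangleright v)$; and parts 3) and 4), which in addition exploit $S(\phi)(Y) = \phi(S(Y))$ and the anti-multiplicativity $S(YX) = S(X) S(Y)$.

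For part 1), unravelling the definitions gives
\[
(X \triangleright c^{\Lambda}_{f,v})(Y) = c^{\Lambda}_{f,v}(YX) = f(Y \triangleright (X \triangleright v)) = c^{\Lambda}_{f,\, X \triangleright v}(Y),
\]
which is $\triangleright$-equivariance. Part 2) is the dual computation: $(c^{\Lambda}_{f,v} \triangleleft X)(Y) = f(X \triangleright (Y \triangleright v))$, and the last expression equals $(f \triangleleft X)(Y \triangleright v) = c^{\Lambda}_{f \triangleleft X,\, v}(Y)$ by the definition of the right action on $V(\Lambda)^*$.

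For parts 3) and 4), the key preliminary identity is $S(c^{\Lambda}_{f,v})(Y) = c^{\Lambda}_{f,v}(S(Y)) = f(S(Y) \triangleright v)$. Once this is in hand, the antihomomorphism property of $S$ converts everything into the setting of parts 1) and 2) with an extra factor of $S$ that is absorbed by the definitions of the right actions. Concretely, for part 3) one computes
\[
(S(c^{\Lambda}_{f,v}) \triangleleft X)(Y) = f(S(Y) S(X) \triangleright v) = f(S(Y) \triangleright (v \triangleleft X)) = S(c^{\Lambda}_{f,\, v \triangleleft X})(Y),
\]
using $v \triangleleft X = S(X) \triangleright v$ in the middle step. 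For part 4), the analogous chain is
\[
(X \triangleright S(c^{\Lambda}_{f,v}))(Y) = f(S(X) \triangleright (S(Y) \triangleright v)) = (X \triangleright f)(S(Y) \triangleright v) = S(c^{\Lambda}_{X \triangleright f,\, v})(Y),
\]
where now the factor $S(X)$ acting on $V(\Lambda)$ is traded for $X$ acting on $V(\Lambda)^*$ via the defining formula $(X \triangleright f)(w) = f(S(X) \triangleright w)$.

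There is no genuine obstacle here: the statement is a formal consequence of the definitions, and the symmetry exchanging $\triangleright$ and $\triangleleft$ in parts 3) and 4) compared to parts 1) and 2) reflects exactly the antihomomorphism property of the antipode, which is also how the right actions on $V(\Lambda)$ and $V(\Lambda)^*$ were set up in the preceding definition. The only point requiring a modicum of care is bookkeeping of which variable ($v$ or $f$) the action lands on after $S$ is applied; this is forced by the placement of $S(Y)$ on the left and of $S(X)$ on the right in $S(YX) = S(X)S(Y)$.
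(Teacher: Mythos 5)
Your proposal is correct and follows essentially the same route as the paper's proof: a direct unwinding of the definitions of the actions on $\Cqg$, $V(\Lambda)$, and $V(\Lambda)^*$, with the antihomomorphism property of $S$ handling parts 3) and 4). The only difference is the purely cosmetic swap of the roles of $X$ and $Y$ as acting versus test element.
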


\begin{proof}
First we prove 1) and 2). We have
\[
\begin{split}
(Y\triangleright c_{f,v}^{\Lambda})(X)
& = c_{f,v}^{\Lambda}(XY) = f(X\triangleright Y\triangleright v) = c_{f,Y\triangleright v}^{\Lambda}(X), \\
(c_{f,v}^{\Lambda}\triangleleft Y)(X)
& = c_{f,v}^{\Lambda}(YX) = f(Y\triangleright X\triangleright v) = c_{f\triangleleft Y,v}^{\Lambda}(X).
\end{split}
\]
To prove 3) we need to use the fact that $S$ is an anti-homomorphism. We have
\[
\begin{split}
(S(c_{f,v}^{\Lambda})\triangleleft Y)(X)
& = S(c_{f,v}^{\Lambda})(YX) = c_{f,v}^{\Lambda}(S(YX)) = f(S(X) \triangleright S(Y)\triangleright v) \\
&  = f(S(X) \triangleright (v \triangleleft Y)) = c_{f, v \triangleleft Y}^\Lambda (S(X)) = S(c_{f, v \triangleleft Y}^\Lambda) (X).
\end{split}
\]
The proof of 4) is similar to that of 3). We compute
\[
\begin{split}
(Y\triangleright S(c_{f,v}^{\Lambda}))(X) & = S(c_{f,v}^{\Lambda})(XY)=c_{f,v}^{\Lambda}(S(XY)) = f(S(Y)\triangleright S(X)\triangleright v) \\
& = (Y\triangleright f)(S(X)\triangleright v) = c_{Y\triangleright f,v}^{\Lambda}(S(X))
= S(c_{Y\triangleright f,v}^{\Lambda})(X).
\qedhere
\end{split}
\]
\end{proof}

These maps can be used to describe the action of $\Uqg$ on the matrix units $\mathsf{M}^n_m$ and $\mathsf{N}^n_m$.

\begin{corollary}
\label{cor:equiv-maps}
Let $\{v_m\}_m$ be an orthonormal basis of $V(\Lambda)$ and $\{f^n\}_n$ be the dual basis of $V(\Lambda)^*$.
We define the maps $\gamma_L^{(i, j)}, \gamma_R^{(i, j)}: V(\Lambda) \otimes V(\Lambda)^* \to \Cqg$ by the formulae
\[
\gamma_L^{(i, j)} (v_m \otimes f^n) := (\mathsf{N}^n_m)^i_j, \quad
\gamma_R^{(i, j)} (v_m \otimes f^n) := (\mathsf{M}^n_m)^i_j.
\]
Then $\gamma_L^{(i, j)}$ is $\triangleright$-equivariant and $\gamma_R^{(i, j)}$ is $\triangleleft$-equivariant.
\end{corollary}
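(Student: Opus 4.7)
The plan is to factor each matrix entry as a product of two pieces, where each piece depends on only one of the two tensor factors of the input, and then to apply the appropriate case of \cref{prop:equiv-maps} to each piece separately. Using $S(u^i_j) = u^{j*}_i$ we can rewrite
\[
(\mathsf{N}^n_m)^i_j = u^i_m u^{j*}_n = c^{\Lambda}_{f^i, v_m} \cdot S(c^{\Lambda}_{f^n, v_j}), \qquad
(\mathsf{M}^n_m)^i_j = u^{m*}_i u^n_j = S(c^{\Lambda}_{f^i, v_m}) \cdot c^{\Lambda}_{f^n, v_j}.
\]
For fixed $i,j$, the first factor in each product is a function of $v_m$ alone and the second factor is a function of $f^n$ alone, so that each $\gamma^{(i,j)}_L$ and $\gamma^{(i,j)}_R$ is realized as the multiplication-composed tensor product of two maps with a single-variable source.

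Next I would identify the equivariance type of each factor using \cref{prop:equiv-maps}. For $\gamma^{(i,j)}_L$: the map $v \mapsto c^{\Lambda}_{f^i, v}$ is $\triangleright$-equivariant by part (1), while the map $f \mapsto S(c^{\Lambda}_{f, v_j})$ is $\triangleright$-equivariant by part (4). For $\gamma^{(i,j)}_R$: the map $v \mapsto S(c^{\Lambda}_{f^i, v})$ is $\triangleleft$-equivariant by part (3), and the map $f \mapsto c^{\Lambda}_{f, v_j}$ is $\triangleleft$-equivariant by part (2). So in each case the two factors are equivariant of the same type, the type being $\triangleright$ for $\gamma^{(i,j)}_L$ and $\triangleleft$ for $\gamma^{(i,j)}_R$.

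The remaining step is the general elementary observation that a pointwise product of two equivariant maps of the same type is again equivariant of that type. Concretely, if $\phi_1 : V_1 \to \Cqg$ and $\phi_2 : V_2 \to \Cqg$ are $\triangleright$-equivariant, then for any $X \in \Uqg$
\[
X \triangleright \bigl( \phi_1(v_1) \phi_2(v_2) \bigr) = \bigl(X_{(1)} \triangleright \phi_1(v_1)\bigr) \bigl(X_{(2)} \triangleright \phi_2(v_2)\bigr) = \phi_1(X_{(1)} \triangleright v_1) \phi_2(X_{(2)} \triangleright v_2),
\]
which matches the image of $X \triangleright (v_1 \otimes v_2)$ under the tensor product map; the analogous identity holds for $\triangleleft$-equivariance using $(ab)\triangleleft X = (a\triangleleft X_{(1)})(b \triangleleft X_{(2)})$ and the corresponding tensor-product right action. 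Applying this to our two factorisations yields the claim.

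There is no genuine obstacle to this argument; the only point that requires attention is bookkeeping, namely recognising which of the four cases of \cref{prop:equiv-maps} governs each of the four factors, which amounts to tracking where the antipode sits and whether the variable lives in $V(\Lambda)$ or in $V(\Lambda)^*$.
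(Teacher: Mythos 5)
Your proof is correct and follows essentially the same route as the paper: both factor $(\mathsf{N}^n_m)^i_j$ and $(\mathsf{M}^n_m)^i_j$ into products of matrix coefficients and their antipodes, invoke the corresponding cases of \cref{prop:equiv-maps}, and use the Hopf-algebra compatibility $X \triangleright (ab) = (X_{(1)} \triangleright a)(X_{(2)} \triangleright b)$ to match the tensor product action. You are merely a bit more explicit than the paper about the ``product of equivariant maps is equivariant'' step, which is fine.
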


\begin{proof}
The action of $\Uqg$ on $V(\Lambda) \otimes V(\Lambda)^*$ is the usual tensor product action, namely $X \triangleright (v \otimes f) = X_{(1)} \triangleright v \otimes X_{(2)} \triangleright f$. On the other hand on $\Cqg$ we have
\[
X \triangleright (\mathsf{N}^n_m)^i_j = (X_{(1)} \triangleright u^i_m) (X_{(2)} \triangleright u_n^{j*}) = (X_{(1)} \triangleright u^i_m) (X_{(2)} \triangleright S(u^n_j)),
\]
where the last step holds because we are considering orthonormal bases.
Since by definition we have $u^i_j = c^\Lambda_{f^i, v_j}$ the result follows from \cref{prop:equiv-maps}.

For the right action we similarly observe that
\[
(\mathsf{M}^n_m)^i_j \triangleleft X = (u^{m*}_i \triangleleft X_{(1)}) (u^n_j \triangleleft X_{(2)}) = (S(u^i_m) \triangleleft X_{(1)}) (u^n_j \triangleleft X_{(2)}).
\]
Then the result follows again from \cref{prop:equiv-maps}.
\end{proof}

\subsection{Generalized flag manifolds}

We follow the setup of \cite{quantum-flag}.
Let $S$ be a subset of the simple roots of $\mathfrak{g}$.
Then the \emph{quantized Levi factor} is defined as
\[
\Uqlevi = \textrm{algebra generated by } \{K_\lambda, E_i, F_i : i \in S\} \subset \Uqg.
\]
It is clear from the definition that $\Uqlevi$ is a Hopf $*$-subalgebra of $\Uqg$. Corresponding to the choice of $S$, the quantized coordinate rings of \emph{generalized flag manifolds} are defined as
\[
\begin{split}
\flagleviL & = \{a \in \mathbb{C}_q[G] : X \triangleright a = \varepsilon(X) a, \ \forall X \in \Uqlevi \}, \\
\flagleviR & = \{a \in \mathbb{C}_q[G] : a \triangleleft X = \varepsilon(X) a, \ \forall X \in \Uqlevi \}.
\end{split}
\]
It is easy to see that they are $*$-subalgebras of $\Cqg$.
The case of full flag manifolds corresponds to the choice $S = \emptyset$.
As in that case, we have right and left actions of $\Uqg$.

The aim is to apply the results of the previous sections to the case of generalized flag manifolds.
In order to do this we need to define appropriate matrices over $\flagleviL$ and $\flagleviR$ in terms of the matrix units $\mathsf{N}^n_m$ and $\mathsf{M}^n_m$.
The next result shows that it is equivalent to having a $U_{q}(\mathfrak{l}_S)$-invariant vector in $V(\Lambda) \otimes V(\Lambda)^*$.

\begin{proposition}
\label{prop:inv-vector}
Let $\mathsf{P} = \sum_{m, n} c^m_n \mathsf{M}^n_m$ and $\mathsf{Q} = \sum_{m, n} c^m_n \mathsf{N}^n_m$.
Define
\[
w = \sum_{m, n} c^m_n v_m \otimes f^n \in V(\Lambda) \otimes V(\Lambda)^*.
\]
Then $\mathsf{P}^i_j \in \flagleviR$ and $\mathsf{Q}^i_j \in \flagleviL$ if and only if $w$ is a $U_{q}(\mathfrak{l}_S)$-invariant vector.
\end{proposition}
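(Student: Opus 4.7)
The plan is to reduce the statement to the equivariance of the maps $\gamma_L^{(i,j)}$ and $\gamma_R^{(i,j)}$ of \cref{cor:equiv-maps}, combined with the linear independence of the matrix units from \cref{prop:mat-unit1}. By construction we have
\[
\mathsf{Q}^i_j = \gamma_L^{(i,j)}(w), \qquad \mathsf{P}^i_j = \gamma_R^{(i,j)}(w),
\]
so both conditions on the entries are encoded by how the single vector $w$ transforms under $U_q(\mathfrak{l}_S)$. Observe that since $\varepsilon \circ S = \varepsilon$, the condition $X \triangleright w = \varepsilon(X) w$ for all $X \in U_q(\mathfrak{l}_S)$ is equivalent to $w \triangleleft X = \varepsilon(X) w$, so ``invariance of $w$'' is an unambiguous notion that handles both the left and right sides simultaneously.

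For the ``if'' direction I would simply apply the equivariance of $\gamma_L^{(i,j)}$ and $\gamma_R^{(i,j)}$ to the invariance of $w$. Concretely, assuming $X \triangleright w = \varepsilon(X) w$ for $X \in U_q(\mathfrak{l}_S)$, we get $X \triangleright \mathsf{Q}^i_j = \gamma_L^{(i,j)}(X \triangleright w) = \varepsilon(X)\mathsf{Q}^i_j$, so $\mathsf{Q}^i_j \in \flagleviL$; the same move, with $\triangleleft$ and $\gamma_R^{(i,j)}$, gives $\mathsf{P}^i_j \in \flagleviR$.

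For the ``only if'' direction, fix $X \in U_q(\mathfrak{l}_S)$ and set
\[
w^\prime = X \triangleright w - \varepsilon(X) w = \sum_{m,n} d^m_n \, v_m \otimes f^n.
\]
Assuming all $\mathsf{Q}^i_j \in \flagleviL$ and using equivariance of $\gamma_L^{(i,j)}$, we obtain
\[
\sum_{m,n} d^m_n (\mathsf{N}^n_m)^i_j = \gamma_L^{(i,j)}(w^\prime) = 0 \quad \text{for all } i,j.
\]
By the linear independence of the matrix units $\mathsf{N}^n_m$ established in \cref{prop:mat-unit1}, this forces all $d^m_n = 0$, hence $w^\prime = 0$ and $w$ is invariant. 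The analogous argument with $\gamma_R^{(i,j)}$ and the linear independence of the $\mathsf{M}^n_m$ handles the $\mathsf{P}$ side, and since invariance of $w$ is left/right symmetric, a single such verification suffices.

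I do not expect any real obstacle: both key ingredients are already in place, and the proof is essentially a formal unpacking. The one point worth being careful about is that the tensor-product action on $V(\Lambda) \otimes V(\Lambda)^*$ for which $\gamma_L$ and $\gamma_R$ are equivariant is the one induced by the coproduct of $\Uqg$ applied termwise — precisely the action implicit in the proof of \cref{cor:equiv-maps} — and this is the action meant when we speak of $U_q(\mathfrak{l}_S)$-invariance of $w$.
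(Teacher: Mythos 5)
Your proof is correct and follows essentially the same route as the paper: write $\mathsf{Q}^i_j = \gamma_L^{(i,j)}(w)$ and $\mathsf{P}^i_j = \gamma_R^{(i,j)}(w)$, use the equivariance of these maps from \cref{cor:equiv-maps} for the ``if'' direction, and use the linear independence of the matrix units from \cref{prop:mat-unit1} to deduce that $\gamma^{(i,j)}(X\triangleright w - \varepsilon(X)w)=0$ for all $i,j$ forces $X\triangleright w = \varepsilon(X)w$ for the ``only if'' direction; the paper spells this out for $\mathsf{Q}$ and declares the $\mathsf{P}$ case analogous. The one step where you are too quick is the claim that left- and right-invariance of $w$ coincide ``since $\varepsilon\circ S=\varepsilon$'': with the paper's conventions the right action on $V(\Lambda)\otimes V(\Lambda)^*$ is $(v\otimes f)\triangleleft X=(S(X_{(1)})\triangleright v)\otimes(S^{-1}(X_{(2)})\triangleright f)$, which is not obtained by precomposing the left action with $S$, so the equivalence is not a formal consequence of $\varepsilon\circ S=\varepsilon$. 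The equivalence is true, and some form of it is needed to phrase the conclusion with a single notion of invariance (the $\mathsf{P}$ side naturally detects right-invariance, the $\mathsf{Q}$ side left-invariance), but the paper itself only asserts it in a separate remark without proof, so your argument is at the same level of rigor apart from the misleading justification.
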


\begin{proof}
We will spell the proof only for $\mathsf{Q}$, the other case is very similar.
Using the map $\gamma_{L}^{(i, j)} : V(\Lambda) \otimes V(\Lambda)^* \to \Cqg$ from \cref{cor:equiv-maps} we have the equality $\mathsf{Q}^i_j = \gamma_{L}^{(i, j)}(w)$.
This map is $\triangleright$-equivariant. Hence for any $X \in \Uqlevi$ we have
\[
X \triangleright \mathsf{Q}^i_j = X \triangleright \gamma_{L}^{(i, j)}(w) = \gamma_{L}^{(i, j)}(X \triangleright w).
\]
It is clear that if $X \triangleright w = \varepsilon(X) w$ then $X \triangleright \mathsf{Q}^i_j = \varepsilon(X) \mathsf{Q}^i_j$.

Conversely suppose that $X \triangleright \mathsf{Q}^i_j = \varepsilon(X) \mathsf{Q}^i_j$.
Then we must have $\gamma_{L}^{(i, j)}(X \triangleright w - \varepsilon(X) w) = 0$.
To prove that this implies $X \triangleright w = \varepsilon(X) w$, it suffices to show that if $\gamma_{L}^{(i, j)}(z) = 0$ for all $i, j$ then $z = 0$.
Write $z = \sum_{m, n} b^m_n v_m \otimes f^n$. The condition $\gamma_{L}^{(i, j)}(z) = 0$ is equivalent to $\sum_{m, n} b^m_n (\mathsf{N}^n_m)^i_j = 0$.
Since by \cref{prop:mat-unit1} we know that the matrices $\mathsf{N}^n_m$ are linearly independent we must have $b^m_n = 0$, hence $z = 0$.
\end{proof}

\begin{remark}
The module $V(\Lambda) \otimes V(\Lambda)^*$ always contains an invariant vector, corresponding to the trivial subrepresentation.
However this is not interesting for our purposes: indeed this vector is invariant under the whole $\Uqg$ and, as a consequence, the elements $\mathsf{P}$ and $\mathsf{Q}$
constructed in this way are multiples of the identity.
\end{remark}

\begin{remark}
It can be shown that, if $w \in V(\Lambda) \otimes V(\Lambda)^*$ is $\Uqlevi$-invariant with respect to the left action, then it also invariant with respect to the right action.
\end{remark}

The upshot is that, given a non-trivial invariant vector in $V(\Lambda) \otimes V(\Lambda)^*$, we can construct appropriate invariant matrices in terms of the matrix units $\mathsf{M}^n_m$ and $\mathsf{N}^n_m$.
However recall that for the construction of twisted $2$-cycles we need invariant projections. This leads to more complicated conditions on the invariant vector.
We will not attempt to discuss this problem in full generality, but rather present an interesting example in the next subsection.

\subsection{Quantum Grassmannians}

As an example of the setup discussed above, we will consider the quantum Grassmannians.
The quantized coordinate rings $\mathbb{C}_q[\mathrm{Gr}(r, N)]$ are defined by taking $\mathfrak{g} = \mathfrak{sl}(N)$ and $S$ to be the set of simple roots with $\alpha_r$ removed.

For our construction of invariant matrices we will pick $\Lambda = \omega_1$, corresponding to the fundamental representation. This representation can be realized as follows.

\begin{lemma}
The fundamental representation $V(\omega_1)$ of $U_{q}(\mathfrak{sl}(N))$ is realized on $\mathbb{C}^N$ by
\[
\pi(K_{k})v_{i} = q^{\delta_{i,k}-\delta_{i,k+1}}v_{i}, \quad
\pi(E_{k})v_{i} = \delta_{i}^{k+1}q^{-1/2}v_{i-1}, \quad
\pi(F_{k})v_{i} = \delta_{i}^{k}q^{1/2}v_{i+1}.
\]
The highest weight vector is given by $v_1$. Moreover this representation is unitary with respect to the standard Hermitian inner product on $\mathbb{C}^N$.
\end{lemma}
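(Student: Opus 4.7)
The plan is to verify in sequence: (i) the stated formulas satisfy the defining relations of $U_q(\mathfrak{sl}(N))$, (ii) $v_1$ is a highest weight vector of weight $\omega_1$, (iii) the representation is cyclic on $v_1$ and $N$-dimensional with the correct weight multiplicities, hence coincides with the irreducible $V(\omega_1)$, and (iv) the compact real form acts unitarily with respect to the standard Hermitian inner product on $\mathbb{C}^N$. Throughout I would use that $\mathfrak{sl}(N)$ has only short simple roots, so $d_i = 1$ and $q_i = q$, with Cartan matrix $a_{ii} = 2$, $a_{i, i \pm 1} = -1$, and zero otherwise.

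For (i), the $\pi(K_k)$ are diagonal and mutually commute. The conjugation identities $\pi(K_k) \pi(E_j) \pi(K_k)^{-1} = q^{a_{kj}} \pi(E_j)$ and $\pi(K_k) \pi(F_j) \pi(K_k)^{-1} = q^{-a_{kj}} \pi(F_j)$ follow by evaluating the $q$-factor produced by conjugation on the unique nonzero entry of $\pi(E_j)$ (respectively $\pi(F_j)$) and matching it against $a_{kj}$ case by case. The Serre relations then hold almost trivially, since $\pi(E_i)^2 = 0 = \pi(F_i)^2$ (each operator is rank one with image inside its kernel), combined with a direct check that $\pi(E_i E_j E_i) = 0$ for $|i - j| = 1$ and $\pi(E_i) \pi(E_j) = \pi(E_j) \pi(E_i) = 0$ for $|i - j| \geq 2$. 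The only relation requiring a genuine computation is $[\pi(E_i), \pi(F_j)] = \delta_{ij} (\pi(K_i) - \pi(K_i)^{-1})/(q - q^{-1})$: evaluating on $v_k$ yields $\pi(E_i F_j) v_k = \delta_k^i \delta_k^j v_k$ and $\pi(F_j E_i) v_k = \delta_k^{i+1} \delta_{k-1}^j v_k$, both vanishing when $i \neq j$, while for $i = j$ their difference equals $(\delta_k^i - \delta_k^{i+1}) v_k$, matching the right-hand side separately on $k = i$, $k = i+1$, and all other $k$.

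For (ii)--(iv): $\pi(K_k) v_1 = q^{\delta_{1,k}} v_1 = q^{(\alpha_k, \omega_1)} v_1$ identifies the weight of $v_1$ as $\omega_1$, while $\pi(E_k) v_1 = 0$ since $k \geq 1$ forces $\delta_1^{k+1} = 0$. Iterating $\pi(F_k) v_k = q^{1/2} v_{k+1}$ recovers every basis vector from $v_1$, so the module is cyclic of highest weight $\omega_1$; being $N$-dimensional with the correct weight diagram, it coincides with $V(\omega_1)$. For unitarity I would check the identities $\pi(K_k^*) = \pi(K_k)^*$, $\pi(E_k^*) = \pi(E_k)^*$, and $\pi(F_k^*) = \pi(F_k)^*$ by comparing matrix entries: the first holds because $\pi(K_k)$ is diagonal with real entries; for the second, both $\pi(K_k F_k)$ and $\pi(E_k)^*$ have the unique nonzero matrix entry $q^{-1/2}$ at position $(k+1, k)$; for the third, both $\pi(E_k K_k^{-1})$ and $\pi(F_k)^*$ have the unique nonzero matrix entry $q^{1/2}$ at position $(k, k+1)$. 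The only real obstacle throughout is the Kronecker-delta bookkeeping in the $[E_i, F_j]$ verification; no step is conceptually difficult, and the argument amounts altogether to unwinding the standard construction of the vector representation of $U_q(\mathfrak{sl}(N))$.
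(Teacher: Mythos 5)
Your proof is correct, and in fact the paper simply states that the lemma ``follows from simple computations that we omit,'' so your write-up supplies exactly the verification the paper leaves out: the defining relations (with the $[E_i,F_j]$ and Serre checks done correctly), the identification of $v_1$ as a highest weight vector of weight $\omega_1$ generating an $N$-dimensional cyclic module, and the unitarity check against the compact real form $K_i^* = K_i$, $E_i^* = K_iF_i$, $F_i^* = E_iK_i^{-1}$, whose matrix-entry comparisons I verified agree with the stated formulas. Nothing is missing.
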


\begin{proof}
This follows from simple computations that we omit.
\end{proof}

Now we look for non-trivial $\Uqlevi$-invariant vectors in the tensor product $V(\omega_1) \otimes V(\omega_1)^*$, as in \cref{prop:inv-vector}.
We have $V(\omega_1) \otimes V(\omega_1)^* \cong V(0) \oplus V(\omega_1 + \omega_{N - 1})$ and classically the adjoint representation $V(\omega_1 + \omega_{N - 1})$ contains such an invariant vector.

\begin{lemma}
Let $w = \sum_{m = 1}^r v_m \otimes f^m \in V(\omega_1) \otimes V(\omega_1)^*$.
Then $w$ is $\Uqlevi$-invariant.
\end{lemma}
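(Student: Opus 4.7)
The plan is to verify $X \triangleright w = \varepsilon(X) w$ on the algebra generators of $U_q(\mathfrak{l}_S)$. Since $S$ omits $\alpha_r$, these are all the $K_k$ together with the $E_i, F_i$ for $i \in \{1, \ldots, N-1\} \setminus \{r\}$. It suffices to check invariance on these generators because they are group-like or skew-primitive and $\varepsilon$ vanishes on the $E_i, F_i$.

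For the Cartan part, each summand $v_m \otimes f^m$ is a zero-weight vector: combining the explicit formula for $K_k \triangleright v_m$ with the dual action $(K_k \triangleright f^m)(v_j) = f^m(K_k^{-1} \triangleright v_j)$, the factors $q^{\delta_{m,k}-\delta_{m,k+1}}$ and $q^{-\delta_{m,k}+\delta_{m,k+1}}$ cancel. Hence $K_k \triangleright w = w$ for every $k$.

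For $E_i$ with $i \neq r$, I will unfold $\Delta(E_i) = E_i \otimes K_i + 1 \otimes E_i$. A direct computation using $S(E_i) = -E_i K_i^{-1}$ gives $E_i \triangleright f^m = -q^{1/2}\delta^m_i f^{i+1}$, while the representation formulae give $E_i \triangleright v_m = \delta_m^{i+1} q^{-1/2} v_{m-1}$ and $K_i \triangleright f^{i+1} = q f^{i+1}$. The only potentially nonzero contributions to $E_i \triangleright w$ come from $m = i+1$ in the first tensor factor of the coproduct (giving $q^{1/2} v_i \otimes f^{i+1}$, present iff $i+1 \leq r$) and from $m = i$ in the second (giving $-q^{1/2} v_i \otimes f^{i+1}$, present iff $i \leq r$). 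If $i \leq r-1$ both contributions appear and cancel; if $i \geq r+1$ both are absent because the required index exceeds $r$. Either way $E_i \triangleright w = 0$. The argument for $F_i$ is completely parallel using $\Delta(F_i) = F_i \otimes 1 + K_i^{-1} \otimes F_i$ and $F_i \triangleright f^m = -q^{-1/2}\delta^m_{i+1} f^i$; the two nonzero pieces are $q^{1/2} v_{i+1} \otimes f^i$ and its negative, and they are either both present (and cancel) or both absent.

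The main obstacle is only the book-keeping of $q$-factors from the $K_i$-tail in the coproducts and from the antipode in the dual action; once these are tracked, the conceptual point is transparent: the excluded index $i = r$ is precisely the borderline case where $i$ lies in $\{1, \ldots, r\}$ but $i+1$ does not, so one of the two cancelling contributions disappears and leaves an uncancelled term proportional to $v_{r+1} \otimes f^r$ (or $v_r \otimes f^{r+1}$ for $E_r$). This is exactly why the construction requires removing $\alpha_r$ from $S$, and the range of summation in $w$ being $\{1, \ldots, r\}$ matches the block structure of $V(\omega_1)$ as an $\Uqlevi$-module.
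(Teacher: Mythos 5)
Your proof is correct and follows essentially the same route as the paper: a direct computation of $E_i\triangleright(v_m\otimes f^m)$ and $F_i\triangleright(v_m\otimes f^m)$ via the coproduct and the dual action, followed by the telescoping cancellation over $m=1,\dots,r$, with the case $i=r$ exactly the one where the cancellation fails. The only (welcome) addition is that you explicitly verify invariance under the $K_k$, which the paper leaves implicit.
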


\begin{proof}
First of all recall the action on the dual, given by $X \triangleright f^i = \sum_j \pi(S(X))^i_j f^j$.
A simple computation then shows that $E_k \triangleright f^i = - \delta^i_k q^{1/2} f^{i + 1}$.
Then we can compute
\[
\begin{split}
E_k \triangleright (v_m \otimes f^m)
& = E_k \triangleright v_m \otimes K_k \triangleright f^m + v_m \otimes E_k \triangleright f^m \\
& = q^{-\delta_{m,k} + \delta_{m,k+1}} \delta_{m}^{k+1}q^{-1/2} v_{m-1} \otimes f^{m} - \delta_{m}^{k} q^{1/2} v_{m} \otimes f^{m+1}\\
& = \delta^{k+1}_m q^{1/2} v_{m - 1} \otimes f^m - \delta^k_m q^{1/2} v_m \otimes f^{m + 1}.
\end{split}
\]
Now we have to show that $E_k \triangleright w = 0$ for $k \neq r$.
This is clear for $k > r$, since the sum in $w$ runs from $1$ to $r$.
For $k < r$ on the other hand we have
\[
E_k \triangleright w = \sum_{m = 1}^r E_k \triangleright (v_m \otimes f^m) = q^{1/2} v_{k} \otimes f^{k + 1} - q^{1/2} v_k \otimes f^{k + 1} = 0.
\]
The computation showing invariance under $F_k$ is very similar and we omit it.
Moreover similar computations also show that $w$ is invariant with respect to the right action $\triangleleft$.
\end{proof}

Corresponding to this invariant vector, we get elements $\mathsf{P} = \sum_{m = 1}^r \mathsf{M}^m_m$ and $\mathsf{Q} = \sum_{m = 1}^r \mathsf{N}^m_m$.
It is clear that they are projections.
We will only consider $\mathsf{P}$ in the following.

\begin{lemma}
We have the relations
\[
\mathsf{P}^* = \mathsf{P}, \quad
\mathsf{P}^2 = \mathsf{P}, \quad
\mathrm{Tr}(K_{2 \rho}^{-1} \mathsf{P}) = q^{r - N} [r]_q.
\]
\end{lemma}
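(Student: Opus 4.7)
The three relations all follow by specializing the general properties of the matrix units $\mathsf{M}^n_m$ proved in \cref{prop:mat-unit1}, combined with a short representation-theoretic calculation for the fundamental representation of $\mathfrak{sl}(N)$.

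First I would dispose of $\mathsf{P}^* = \mathsf{P}$ and $\mathsf{P}^2 = \mathsf{P}$ in a single line each. The former is immediate from $(\mathsf{M}^n_m)^* = \mathsf{M}^m_n$, since the diagonal terms are fixed by swapping indices. For idempotence I would expand $\mathsf{P}^2 = \sum_{m,n} \mathsf{M}^m_m \mathsf{M}^n_n$ and apply the product rule $\mathsf{M}^n_m \mathsf{M}^o_p = \delta^n_o \mathsf{M}^p_m$, which collapses the double sum to $\sum_m \mathsf{M}^m_m = \mathsf{P}$. (These two identities hold for any sum of diagonal matrix units, so the Grassmannian-specific range $m = 1, \dots, r$ plays no role here.)

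For the trace identity I would start from the $q$-trace formula $\mathrm{Tr}(\pi(K_{2\rho}^{-1}) \mathsf{M}^n_m) = \delta^n_m q^{-(2\rho, \lambda_m)}$ from \cref{prop:mat-unit1}, which gives $\mathrm{Tr}(K_{2\rho}^{-1} \mathsf{P}) = \sum_{m=1}^r q^{-(2\rho, \lambda_m)}$. The main step is then to evaluate $(2\rho, \lambda_m)$ in the fundamental representation $V(\omega_1)$ of $\mathfrak{sl}(N)$. Using the lemma's formula $\pi(K_k) v_i = q^{\delta_{i,k} - \delta_{i,k+1}} v_i$ together with the convention $K_i v_\mu = q^{(\alpha_i,\mu)} v_\mu$ (valid since $d_i = 1$), I read off $(\alpha_k, \lambda_i) = \delta_{i,k} - \delta_{i,k+1}$. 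Expressing $2\rho$ in the simple root basis as $2\rho = \sum_k k(N-k)\, \alpha_k$ (the coefficient of $\alpha_k$ counts positive roots containing $\alpha_k$, namely pairs $(i,j)$ with $i \le k < j$), a short telescoping gives $(2\rho, \lambda_i) = N - 2i + 1$.

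The final step is a geometric-series manipulation:
\[
\sum_{m=1}^r q^{-(2\rho,\lambda_m)} = \sum_{m=1}^r q^{2m - N - 1} = q^{1-N} \sum_{m=0}^{r-1} q^{2m} = q^{r-N} \cdot \frac{q^r - q^{-r}}{q - q^{-1}} = q^{r-N} [r]_q,
\]
which yields the stated value. I do not anticipate any real obstacle; the only point that requires a little care is the identification of the weights $\lambda_i$ with the inner-product data needed to evaluate $(2\rho,\lambda_i)$, which is why I would include the brief derivation above rather than quote it as standard.
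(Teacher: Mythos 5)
Your proposal is correct and follows essentially the same route as the paper: the first two relations are quoted from the general matrix-unit properties, and the trace is evaluated via $2\rho = \sum_k k(N-k)\alpha_k$ and $(2\rho,\lambda_m) = N - 2m + 1$ followed by the geometric sum. You merely fill in the computations the paper labels as "easy" and phrase the weight data as $(\alpha_k,\lambda_i)=\delta_{i,k}-\delta_{i,k+1}$ rather than $\lambda_i=\omega_i-\omega_{i-1}$, which is equivalent.
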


\begin{proof}
The first two relations follow from the general properties of the matrix units $\mathsf{M}^n_m$, while the last relation requires some extra computations. Recall from \cref{prop:mat-unit1} that $\mathrm{Tr}(K_{2 \rho}^{-1} \mathsf{P}) = \sum_{m = 1}^r q^{-(2 \rho, \lambda_m)}$.
The weights of the fundamental representation $V(\omega_1)$ are given by $\lambda_i = \omega_i - \omega_{i - 1}$ with $i = 1, \cdots, N$, where we use the convention $\omega_0 = \omega_N = 0$.
We also have the identity $2 \rho = \sum_{k = 1}^{N - 1} k (N - k) \alpha_k$.
Then it is easy to show that $(2\rho, \lambda_m) = N - 2m + 1$.
Finally a simple computation shows that $\sum_{m = 1}^r q^{-(2 \rho, \lambda_m)} = q^{r - N} [r]_q$.
\end{proof}

\begin{remark}
The entries of $\mathsf{P}$ actually generate the algebra $\mathbb{C}_q[\mathrm{Gr}(r, N)]$, as shown in \cite{kolb}.
This is reasonable, since for $q \to 1$ the above conditions mean that $\mathsf{P}$ is an orthogonal projection of rank $r$ and classically $\mathrm{Gr}(r, N)$ can be identified with the space of such matrices.
\end{remark}

Finally we show that the class of the twisted $2$-cycle $C(\mathsf{P})$ is non-trivial.

\begin{proposition}
The class $[C(\mathsf{P})] \in H_2^\theta(\mathbb{C}_q[\mathrm{Gr}(r, N)])$ is non-trivial.
\end{proposition}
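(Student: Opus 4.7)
The strategy is to apply \cref{thm:conditions}: it suffices to find some simple root $\alpha_a$ for which $\chi_a(\mathsf{P}) \neq 0$. Recall that
\[
\chi_a(\mathsf{P}) = \sum_{i} c^i_i \, q^{(\alpha_a - 2\rho, \lambda_i)} [d_a^{-1}(\alpha_a, \lambda_i)]_{q_a},
\]
and for our projection $\mathsf{P} = \sum_{m=1}^r \mathsf{M}^m_m$ we have $c^i_i = 1$ for $1 \le i \le r$ and zero otherwise. Since $\mathfrak{g} = \mathfrak{sl}(N)$ is simply-laced we have $d_a = 1$ and $q_a = q$ for all $a$, so the formula simplifies to $\chi_a(\mathsf{P}) = \sum_{i=1}^r q^{(\alpha_a - 2\rho, \lambda_i)} [(\alpha_a, \lambda_i)]_q$.

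The key observation is that the weights of $V(\omega_1)$ satisfy $\lambda_i = \omega_i - \omega_{i-1}$ (with $\omega_0 = \omega_N = 0$), so that
\[
(\alpha_a, \lambda_i) = \delta_{a,i} - \delta_{a, i-1}.
\]
Plugging this in, one sees that for a generic index $a < r$ both $i = a$ and $i = a + 1$ lie in the summation range $\{1, \ldots, r\}$, and they contribute with opposite signs in $[(\alpha_a, \lambda_i)]_q$; using the identity $(2\rho, \lambda_i) = N - 2i + 1$ from the preceding lemma, one checks that the two exponents $(\alpha_a - 2\rho, \lambda_a)$ and $(\alpha_a - 2\rho, \lambda_{a+1})$ coincide, so these contributions cancel out. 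This cancellation is precisely what one would expect, since the full sum (up to $N$) corresponds to the identity matrix and would give zero.

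The non-triviality therefore comes from the asymmetry produced by the truncation at $i = r$. Specifically, I would take $a = r$, the simple root that is removed in defining the Grassmannian. Then $(\alpha_r, \lambda_i) = 0$ for $i < r$, so these terms vanish, and the boundary term $i = r + 1$ is excluded from the sum because $c^{r+1}_{r+1} = 0$. Only $i = r$ survives, giving
\[
\chi_r(\mathsf{P}) = q^{(\alpha_r - 2\rho, \lambda_r)} [1]_q = q^{2r - N},
\]
which is manifestly non-zero. By \cref{thm:conditions} this implies that $[C(\mathsf{P})]$ is non-trivial in $HH_2^\theta(\flagleviR)$, and in particular in $H_2^\theta(\mathbb{C}_q[\mathrm{Gr}(r, N)])$.

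No serious obstacle is expected: the proof is essentially a short representation-theoretic calculation based on the explicit weight structure of the fundamental representation. The only subtlety is appreciating that the "boundary" term at $i = r$ survives precisely because the invariant projection is of rank $r$ and the simple root $\alpha_r$ is the one excluded from the Levi data $S$; this is the same feature that makes $\mathsf{P}$ descend to the Grassmannian in the first place.
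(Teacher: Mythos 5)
Your proposal is correct and follows essentially the same route as the paper: apply \cref{thm:conditions} with $a = r$, use $(\alpha_r, \lambda_i) = \delta_{r,i} - \delta_{r,i-1}$ so that only the $i = r$ term survives, and conclude from $\chi_r(\mathsf{P}) = q^{2r-N} \neq 0$ (which agrees with the paper's $q^{-(2\rho,\lambda_r)+1}$). The additional observation about the cancellation for $a < r$ is a correct and illuminating aside, but not needed for the argument.
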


\begin{proof}
We will use the first criterion in \cref{thm:conditions}.
For the projection $\mathsf{P} = \sum_{m = 1}^r \mathsf{M}^m_m$ we have $\chi_a(\mathsf{P}) = \sum_{i = 1}^r q^{(\alpha_a - 2\rho, \lambda_i)} [(\alpha_a, \lambda_i)]_q$.
We take $a = r$, where $r$ is the parameter defining the Grassmannian.
Since $\lambda_i = \omega_i - \omega_{i - 1}$ we get $(\alpha_r, \lambda_i) = \delta_{r, i} - \delta_{r, i - 1}$ and
\[
\chi_r(\mathsf{P}) = \sum_{i = 1}^r q^{(\alpha_r - 2\rho, \lambda_i)} [(\alpha_r, \lambda_i)]_q = q^{-(2\rho, \lambda_r) + 1}.
\]
This is non-zero and hence the class is non-trivial.
\end{proof}

In the classical limit $q \to 1$ the class $[C(\mathsf{P})]$ can be identified with a differential $2$-form, thanks to the Hochschild-Kostant-Rosenberg theorem.
In particular we can look at the case of projective spaces. Then it is possible to show that the class $[C(\mathsf{P})]$ corresponds, up to a scalar, with the Kähler form coming from the Fubini-Study metric.

\vspace{3mm}

{\footnotesize
\emph{Acknowledgements}.
I want to thank Adam Rennie for discussions on quantum projective spaces, which eventually led to this project.
Many thanks are also due to Robert Yuncken, whose comments have considerably improved this paper.
I also want to thank Stefan Kolb for answering a question on quantum Grassmannians.
I want to thank IISER in Trivandrum, India for granting me space to work on this project during the summer I spent there. The rest of this work was done at Université Clermont Auvergne in Clermont-Ferrand, France. Partially supported by the FWO grant G.0251.15N from Vrije Universiteit Brussel (VUB).
}

\bigskip

\end{document}